\newcommand{\vertiii}[1]{{\left\vert\kern-0.25ex\left\vert\kern-0.25ex\left\vert #1 \right\vert\kern-0.25ex\right\vert\kern-0.25ex\right\vert}}
\newcommand{\N}{\mathbb{N}}
\newcommand{\Z}{\mathbb{Z}}
\newcommand{\R}{\mathbb{R}}
\newcommand{\C}{\mathbb{C}}
\newcommand{\T}{\mathbb{T}}
\newcommand{\norma}[2]{\lVert #1\rVert_{#2}}
\newcommand{\Norma}[2]{|\!|\!|{#1}|\!|\!|_{#2}}
\newcommand{\snorma}[2]{\lvert #1\rvert_{#2}}
\newcommand{\velocity}{v}
\newcommand{\zcurve}{\mathbf{z}}
\newcommand{\vectV}{\mathbf{v}}
\newcommand{\vectW}{\mathbf{v}^{(1)}}
\newcommand{\map}{\mathbf{x}}
\newcommand{\A}{a}
\newcommand{\B}{b}
\newcommand{\F}{f}
\newcommand{\q}{q}
\newcommand{\BSO}{\mathbf{B}}
\newcommand{\BRO}{\mathscr{B}}
\newcommand{\normal}[2]{\mathbf{n}_{#1}^{#2}}
\newcommand{\Turzone}{\Omega_{\mathrm{tur}}}
\newcommand{\TT}[1]{\mathbf{T}_{#1}}
\newcommand{\M}[2]{[\![#1]\!]_{#2}}
\newcommand{\J}[2]{[#1]_{#2}}
\newcommand{\mean}[1]{\langle #1\rangle}
\newcommand{\dev}[1]{\{#1\}}
\newcommand{\CA}[1]{\mathcal{C}(#1)}
\newcommand{\PV}{\mathrm{pv}}
\newcommand{\dif}{\,\mathrm{d}}
\newcommand{\tr}{\mathrm{tr}}
\newcommand{\Log}{\mathrm{Log}}
\newcommand{\Div}{\mathrm{div}}
\newcommand{\Curl}{\mathrm{curl}}
\newcommand{\sop}{\mathrm{supp}}
\newcommand{\car}[1]{\mathbbm{1}_{#1}}
\newcommand{\sgn}{\mathrm{sgn}}
\newcommand{\Ind}{\mathrm{I}}
\newcommand{\circulation}{\mathscr{C}}
\def\Xint#1{\mathchoice
	{\XXint\displaystyle\textstyle{#1}}%
	{\XXint\textstyle\scriptstyle{#1}}%
	{\XXint\scriptstyle\scriptscriptstyle{#1}}%
	{\XXint\scriptscriptstyle\scriptscriptstyle{#1}}%
	\!\int}
\def\XXint#1#2#3{{\setbox0=\hbox{$#1{#2#3}{\int}$}
		\vcenter{\hbox{$#2#3$}}\kern-.5\wd0}}
\def\dashint{\Xint-}  
\theoremstyle{theorem}
\newtheorem{thm}{Theorem}[section]
\newtheorem{prop}{Proposition}[section]
\newtheorem{cor}{Corollary}[section]
\newtheorem{lemma}{Lemma}[section]
\theoremstyle{definition}
\newtheorem{defi}{Definition}[section]
\theoremstyle{remark}
\newtheorem{Rem}{Remark}[section]
\newtheorem{Ex}{Example}[section]
\numberwithin{equation}{section}
\title[Evolution of vortex sheet]{Dissipative Euler flows for vortex sheet initial data without distinguished sign}
\subjclass[2020]{35Q35, 76B47, 76E30, 76F10}
\keywords{Hydrodynamics, evolution of unstable interfaces, vortex sheet, convex integration}
\author{Francisco Mengual}
\address{Departamento de Matem\'aticas, Universidad Aut\'onoma de Madrid, E-28049 Madrid, Spain; Instituto de Ciencias Matem\'aticas (CSIC-UAM-UC3M-UCM), E-28049  Madrid, Spain.}
	\email{francisco.mengual@uam.es}
	\thanks{F.~M.~was partially supported by the Spanish Ministry of Economy through the ICMAT Severo Ochoa project SEV-2015-0554,  the grant MTM2017-85934-C3-2-P (Spain) and the ERC grant  307179-GFTIPFD, ERC grant 834728-QUAMAP}
\author{L\'{a}szl\'{o} Sz\'{e}kelyhidi, Jr.}
	\address{Institut f\"{u}r Mathematik, Universit\"{a}t Leipzig, Augustusplatz 10, D-04109, Leipzig, Germany.}
	\email{laszlo.szekelyhidi@math.uni-leipzig.de}
	\thanks{L.~Sz.~was partially supported by the European Research Council (ERC) under the European Union's Horizon 2020 research and innovation programme (grant agreement No.724298-DIFFINCL). Part of this work was completed in the Hausdorff Research Institute (HIM) in Bonn during the Trimester Programme Evolution of Interfaces. The authors gratefully acknowledge the warm hospitality of HIM during this time.}
\date{\today}
\begin{document}
	
\begin{abstract}We construct infinitely many admissible weak solutions to the 2D incompressible Euler equations for vortex sheet initial data. Our initial datum has vorticity concentrated on a simple closed curve in a suitable H\"older space and the vorticity may not have a distinguished sign. Our solutions are obtained by means of convex integration; they are smooth outside a ``turbulence'' zone which grows linearly in time around the vortex sheet. 
As a byproduct, this approach shows how the growth of the turbulence zone is controlled by the local energy inequality and measures the maximal initial dissipation rate in terms of the vortex sheet strength.
\end{abstract}
\maketitle


\pagenumbering{arabic} 
\setcounter{page}{1}
\section{Introduction and main results}\label{sec:Introduccion}

The motion of a 2D ideal incompressible fluid is described by its velocity field $\velocity(t,x)$, which satisfies the \textbf{incompressible Euler equations} (IE) for some scalar pressure $p(t,x)$
\begin{align}
\partial_t\velocity+\Div(\velocity\otimes\velocity)+\nabla p & = 0, \label{Euler:1}\tag{IEa}\\
\Div\velocity & = 0,\label{Euler:2}\tag{IEb}
\intertext{in $\R_+\times\R^2$, evolving from an initial datum $\velocity_0(x)$ satisfying (IEb)}
\velocity |_{t=0} & = \velocity_0.\label{Euler:3}\tag{IEc}
\end{align}
In this work we are interested in the dynamics of vortex sheets, that is, the system (IE) when the initial vorticity $\omega_0:=\Curl\velocity_0$ is concentrated on a curve.  Here we consider
\begin{subequations}
\label{initial}
\begin{align}
\textrm{curves:}& &\zcurve_0&\in C^{k_0+1,\alpha}(\T;\R^2)
\quad\textrm{closed, simple and regular}\label{z0},\\
\textrm{vortex sheet strength:}& &\varpi_0&\in C^{k_0,\alpha}(\T;\R),\label{varpi0}
\end{align}
\end{subequations}
for some $k_0\geq 0$ and $\alpha>0$ to be determined. We may assume w.l.o.g.~that $\zcurve_0$ is the positively oriented ($\circlearrowleft$) arc-length ($|\partial_s\zcurve_0|=1$) parametrization, and so
$\T:=\R/\ell\Z$ with $\ell\equiv\mathrm{length}(\zcurve_0)$. Then, $\omega_0$ is the pushforward measure
\begin{equation}\label{omega0}
\omega_0=\zcurve_0^\sharp(\varpi_0\dif s).
\end{equation}
By identifying $\R^2\simeq\C$ ($*$ $\equiv$ complex conjugate) $\velocity_0$ is recovered from $\omega_0$ by the Biot-Savart law 
\begin{equation}\label{BSlaw}
\velocity_0(x)^*
=\frac{1}{2\pi i}\int_{\T}\frac{\varpi_0(s)}{x-\zcurve_0(s)}\dif s,\quad x\notin\zcurve_0(\T).
\end{equation}
This $\velocity_0$ is bounded, anti-holomorphic outside $\Gamma_0:=\zcurve_0(\T)$ but with tangential discontinuities across $\Gamma_0$. Due to this lack of regularity we must interpret (IE) in its weak formulation.
\begin{defi}\label{Euler:weak} Let us denote
$$L^\infty_{\sigma}(\R^2):=\left\lbrace\velocity\in L^\infty(\R^2;\R^2)\,:\,\int_{\R^2}\velocity\cdot\nabla\psi\dif x=0
\quad\forall\psi\in C_c^1(\R^2)\right\rbrace.$$
A pair $(\velocity,p)\in L^\infty(0,T;L^\infty_{\sigma}(\R^2)\times L^\infty(\R^2))\cap C([0,T];L^\infty_{w*}(\R^2))$ is a \textbf{weak solution} to $\mathrm{(IE)}$ if 
$$\int_0^t\int_{\R^2}(\velocity\cdot\partial_t\Psi
+\velocity\otimes\velocity :\nabla\Psi+p\,\Div\Psi)\dif x\dif\tau=\int_{\R^2}\velocity(t)\cdot\Psi(t)\dif x-\int_{\R^2}\velocity_0\cdot\Psi_0\dif x$$
holds for every test function $\Psi\in C_c^1(\R^3;\R^2)$ and $0\leq t\leq T$, with $\Psi_0\equiv\Psi(0,\cdot)$. In addition, $(\velocity,p)$ is a \textbf{strong solution}  to $\mathrm{(IE)}$ if it is continuous and piecewise $C^1$ on $]0,T]\times\R^2$.
\end{defi}

\subsection{Brief background} \hfill

The Cauchy problem (IE) for the vortex sheet initial data \eqref{BSlaw} serves as a simplified model of many physical phenomena observed in the atmosphere
and oceans related to turbulence, such as mixing layers, jets and wakes (see \cite[\S 9]{MB02} and \cite{Fluidmotion}). By neglecting the effects of  surface tension and viscosity, this predicts the evolution of two incompressible and irrotational fluids (with the same constant densities, e.g.~two masses of water) when they come into contact with different motions at $\Gamma_0$ (\cite{Bir62}). The (tangential) discontinuity in the velocity induces vorticity at the interface $\Gamma_0$ \eqref{omega0}. Experimentally, this instability triggers a laminar-turbulent transition in a neighbourhood of the vortex sheet (\cite[\S 14]{GeoFD}). Mathematically, this Cauchy problem has been tackled from two different approaches.

One approach begins with the celebrated paper \cite{Delort}. Via a compensated-compactness type argument Delort proved that, provided $\omega_0$ belongs to the class $D^+(\R^2):=\mathcal{M}^+(\R^2)\cap H^{-1}(\R^2)$, (IE) admits a global weak solution $(\velocity,p)\in L_{\mathrm{loc}}^{\infty}(\R;L_{\sigma,\mathrm{loc}}^{2}(\R^2)\times\mathscr{S}'(\R^2))$ whose vorticity $\omega(t):=\Curl\velocity(t)$ remains in $D^+$ for all times, where $\mathcal{M}$ (resp. $\mathcal{M}^+$) denotes the space of (non-negative) Radon measures. The question of uniqueness as well as how the singularity spreads is not explicit in this construction.
Delort's result has been proved in different ways: by examining the concentration-cancellation effect \cite{EM94,Sch95} and via vanishing viscosity \cite{Maj93,Sch95} and vortex \cite{LX95,Sch96} methods. In all of them the fixed sign hypothesis seems crucial (cf.~\cite[\S 5]{EM94}).
The case of mixed sign vortex sheets has its own interest, both for its practical applications in aerodynamics and for the complex structures created by the intertwining between regions of positive and negative vorticity (\cite{Kra87}). Despite this, Delort's result has been only extended to $L^p+D^+$ \cite{Delort,Sch95,VW93} and to the case of vortex sheets with reflection symmetry by Lopes, Nussenzveig and Xin \cite{LNX01}.

The other approach aims to capture the structure of these solutions. Under the assumption that the vorticity (equivalently the discontinuity) remains concentrated on a moveable interface
\begin{equation}\label{freebdryassump}
\omega(t)=\zcurve(t)^\sharp(\varpi(t)\dif s),
\end{equation}
it is classical (cf.~\cite{SSBF81,MB02,LNS07,CCG12}) that the corresponding velocity field
\begin{equation}\label{velocity:1}
\velocity(t,x)^*
=\frac{1}{2\pi i}\int_{\T}\frac{\varpi(t,s)}{x-\zcurve(t,s)}\dif s,\quad x\notin\zcurve(t,\T),
\end{equation}
(with $p$ determined by the Bernoulli's law) is a weak solution to (IE) if and only if 
\begin{equation}\label{varpi:1}
\varpi=\varpi_0+\partial_s\tilde{\varpi},
\end{equation}
and $(\zcurve,\tilde{\varpi})$ solves the Birkhoff-Rott integrodifferential equations (BR)
\begin{subequations}
\label{BReq}
\begin{align}
\partial_t\zcurve&=\BRO+r\partial_s\zcurve,
&\partial_t\tilde{\varpi}&=r\varpi, \label{BReq:1}\\
\zcurve|_{t=0}&=\zcurve_0,&
\tilde{\varpi}|_{t=0}&=0, \label{BReq:2}
\end{align}
\end{subequations}
where $r(t,s)$ represents the re-parametrization freedom (cf.~\cite{CCG12}) and $\BRO\equiv\BRO(\zcurve,\varpi)$ is the Birkhoff-Rott operator ($\PV$ $\equiv$ Cauchy principal value)
$$
\BRO(t,s)^*:=
\frac{1}{2\pi i}\PV\!\!\int_{\T}\frac{\varpi(t,s')}{\zcurve(t,s)-\zcurve(t,s')}\dif s'.
$$

The linear stability analysis of \eqref{BReq} w.r.t.~the (steady) planar vortex sheet turns out to be an ill-posed Cauchy problem in terms of the Hilbert transform (cf.~\cite[\S 9.3]{MB02}). By desingularizing $\BRO$ it can be observed that the interface tends to roll-up into spiral vortices (cf.~\S\ref{sec:simulations}). This phenomenon is known as the \textbf{Kelvin-Helmholtz instability}.
The first result of local in time well-posedness of \eqref{BReq} in the class of analytic $(\zcurve_0,\varpi_0)$ was given in \cite{SSBF81} by Sulem, Sulem, Bardos and Frisch. Global in time results are only known for (not necessarily analytic) curves which are close enough to the planar vortex sheet, due to Duchon and Robert \cite{DR88}, by taking $\varpi_0$ somehow well-prepared in terms of $\zcurve_0$. Conversely, Caflish and Orellana exhibited in \cite{CO89} examples of breakdown of analyticity in finite time (see also \cite{Moo79}). In addition, they proved ill-posedness in $H^{s}$ for $s>3/2$. In \cite{Wu06} Wu proved instantaneous analyticity in a very weak class of solutions to \eqref{BReq} as well as some compatibility conditions required in $(\zcurve_0,\varpi_0)$.\\
 
\indent In this way, many of the data in \eqref{initial} are not within these approaches, both for the regularity and freedom required and the possible change of sign in $\varpi_{0}$.

\subsection{The main results} \hfill

Our aim is to present a third approach which is not only complementary to the previous two approaches but also remedies the shortcomings alluded to above. More precisely, our goal in the present paper is to develop a robust existence theory for (IE) for the vortex sheet initial data \eqref{BSlaw} in a large class of non-analytic initial data \eqref{initial}, which at the same time is able to keep track of the geometric evolution of the vortex sheet. In our approach, following \cite{Mixing,Piecewise}, the sharp interface  \eqref{freebdryassump} is replaced by an opening and evolving strip, called \textbf{turbulence zone}, $\Turzone$ around the initial interface. Outside the turbulence zone the solution is analytic, but inside $\Turzone$ the weak solution is obtained using convex integration and is highly irregular. The technique we use for obtaining weak solutions follows \cite{Eulerinclusion,Onadmissibility} and is based on the existence of a suitable \textbf{subsolution} (cf.~Definition~\ref{defi:subsolution}).
The key object of study in our work is then the subsolution, rather than the irregular weak solutions obtained by convex integration. As argued in \cite{HPfluid,HdPDE}, the subsolution relates to the macroscopic information usually studied in connection with hydrodynamical instabilities, such as growth of the turbulent zone, geometric evolution of the instability surface, macroscopic energy transfer, etc. 
The main advantage of this approach is that we circumvent the classical ill-posedness and are in this way able to identify the main contributions for the evolution of the macroscopic interface.


In discussing weak solutions to (IE) it is important to specify some form of \textbf{admissibility} \cite{Onadmissibility,DR00,Lions}, in order to be able to rule out unphysical energy-creating solutions. A typical choice is monotonicity of the \textbf{global kinetic energy}
$$
E(t):=\frac12\int_{\R^2}|\velocity(t,x)|^2\dif x.
$$ 
This condition already suffices to guarantee weak-strong uniqueness \cite{BDS11,Lions}.
However, as in \cite{Delort}, for initial data as in \eqref{omega0} we expect the decay (cf.~Proposition~\ref{prop:BSlaw})
\begin{equation}\label{vdecay}
\velocity(t,x)^*
=\frac{1}{2\pi ix}\left(\int_{\T}\varpi_0\dif s
+\mathcal{O}(|x|^{-1})\right),
\quad |x|\gg 1,
\end{equation}
so that the total energy is not defined, unless $\varpi_{0}$ has zero mean. However, one may proceed as follows. First, we recall the dissipation measure associated with a weak solution.

\begin{defi}[Duchon, Robert \cite{DR00}]\label{defi:dissipation}
Given a weak solution $(\velocity,p)$ to (IE), the \textbf{dissipation} $D$ is defined as the distribution given by
$$\partial_te+\Div((e+p)\velocity)+D=0,$$
where $e:=\tfrac{1}{2}|\velocity|^2$ represents the kinetic energy density. Moreover, for any $0\leq t\leq T$, we denote by $D(t)$ the distribution defined as
\begin{equation}\label{D(t)}
\langle D(t),\psi\rangle
:=\int_{0}^{t}\int_{\R^2}(e\partial_t\psi+(e+p)\velocity\cdot\nabla\psi)\dif x\dif\tau
-\int_{\R^2}e\psi\dif x\Big|_{\tau=0}^{\tau=t},
\end{equation}
for every test function $\psi\in C_c^1(\R^3)$.
\end{defi}
Under some mild conditions (cf.~Lemma~\ref{De})  in the limiting case $\psi\uparrow\car{}$ \eqref{D(t)} reads as
$$
\int_{\R^2}(e(t)-e(0))\dif x=-\langle D(t),\car{}\rangle,
$$
which agrees with the expression
$$E(t)-E(0)=-\langle D(t),\car{}\rangle,$$
for finite-energy solutions. In this way the dissipation $D$ allows us to formulate admissibility. 
\begin{defi}\label{def:globalEI} A weak solution to (IE) whose dissipation $D$ is compactly supported is  \textbf{admissible} (or globally dissipative) if for all $0\leq t\leq T$
$$\langle D(t),\car{}\rangle\geq 0.$$
\end{defi}
For completeness, in Lemma~\ref{lemma:WSU} we will extend the classical weak-strong uniqueness statement to weak solutions as in Definition \ref{def:globalEI} with possibly infinite energy.

With these preparations, our main result is as follows:
\begin{thm}\label{thm:global} Consider initial data \eqref{initial} with $k_0=4$, $\alpha>0$ and $\varpi_0$ not identically vanishing. Then, there exist infinitely many admissible weak solutions to $\mathrm{(IE)}$ for the vortex sheet initial datum \eqref{BSlaw}.
\end{thm}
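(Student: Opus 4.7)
The plan is to follow the subsolution strategy outlined in the introduction. I would exhibit a single subsolution (in the sense of Definition~\ref{defi:subsolution}) $(\velocity,p,R)$ which agrees with the smooth Biot-Savart velocity \eqref{BSlaw} outside an evolving turbulence zone $\Turzone$ and solves an Euler-Reynolds type system with positive semidefinite Reynolds stress $R$ inside $\Turzone$. Once such a subsolution is at hand, the convex-integration machinery of \cite{Eulerinclusion, Onadmissibility} upgrades it to infinitely many weak solutions of (IE) coinciding with the subsolution outside $\Turzone$, and admissibility in the sense of Definition~\ref{def:globalEI} will be arranged once and for all at the subsolution level.

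Concretely, I first fix the geometry of $\Turzone$. Starting from $(\zcurve_0,\varpi_0)$, the idea is to evolve $\zcurve(t,s)$ by a tangentially-corrected Birkhoff-Rott-type flow (so that $\Gamma(t):=\zcurve(t,\T)$ is a small $C^{k_0,\alpha}$ perturbation of $\Gamma_0$ for short times) and then open a thin curved strip
\[
\Turzone := \{\,(t,x)\,:\,\dist{x}{\Gamma(t)} < c(t,s)\,\},
\]
whose half-width grows linearly in time at a rate $c$ proportional to the local strength $|\varpi_0|$. Inside $\Turzone$ I construct an explicit mean velocity $\velocity(t,x)$, pressure $p$, and Reynolds stress $R\succeq 0$ which are smooth, solve the Euler-Reynolds system, match the outer Biot-Savart field and its normal flux on $\partial\Turzone$, and interpolate the tangential discontinuity of $\velocity_0$ across the strip.

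With the subsolution fixed, the next step is to invoke the convex-integration theorem in the form convenient for the piecewise-smooth geometry used in \cite{Mixing, Piecewise}: given a subsolution with strictly positive energy gap inside $\Turzone$, it produces infinitely many weak solutions $\tilde{\velocity}\in L^\infty(0,T;L^\infty_\sigma(\R^2))$ coinciding with $\velocity$ outside $\Turzone$ and having pointwise kinetic energy equal to the prescribed profile. Since the dissipation distribution $D$ in \eqref{D(t)} depends only on the prescribed profile, $\velocity$, and $p$, all these solutions inherit the same $D$ as the subsolution, so tuning the opening rate $c$ and the prescribed energy so that $\langle D(t),\car{}\rangle\geq 0$ ensures admissibility of each of them, and the infinitely-many conclusion follows from the Baire-category/perturbation step of the convex-integration scheme.

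The analytic heart of the proof, and the main obstacle, lies in the first step: constructing a usable subsolution for rough, mixed-sign, non-analytic data. Since \eqref{BReq} is ill-posed in $H^s$ for $s>3/2$, the opening of the strip itself must regularize the Kelvin-Helmholtz instability, and the Cauchy-type integral \eqref{BSlaw} must be replaced by an integral over the two-dimensional slab whose mapping properties in $C^{k_0,\alpha}$ have to be controlled carefully; this is precisely where the regularity budget $k_0=4$, $\alpha>0$ will be spent. Without a distinguished sign for $\varpi_0$ the favourable monotonicity underlying Delort's approach disappears, so positivity of $R$, strictness of the energy gap, and the sign of the dissipation must be verified by a pointwise computation adapted to the local sign of $\varpi_0$, while at the interface $\Gamma_0$ one has to ensure matching of $\velocity_0$ in the weak sense of Definition~\ref{Euler:weak}. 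Once these analytic properties of the subsolution are established, the convex-integration step and the verification of admissibility proceed as in \cite{Mixing, Piecewise}.
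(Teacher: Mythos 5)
Your overall strategy is the paper's: build a subsolution adapted to a linearly opening turbulence zone around the sheet and feed it to the convex-integration criterion of \cite{Onadmissibility}. But as written the proposal defers the entire analytic content of the theorem to the sentence ``Inside $\Turzone$ I construct an explicit mean velocity, pressure, and Reynolds stress,'' and the one concrete mechanism you do propose for fixing the geometry --- evolving $\zcurve(t,s)$ by ``a tangentially-corrected Birkhoff--Rott-type flow'' so that $\Gamma(t)$ is a $C^{k_0,\alpha}$ perturbation of $\Gamma_0$ --- is exactly the step that cannot be carried out: the Birkhoff--Rott system is ill-posed below analyticity (Caflisch--Orellana), so there is no such flow in $C^{k_0,\alpha}$ for general mixed-sign $\varpi_0$. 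The paper's way around this is the key idea you are missing: one never solves an evolution equation for $\zcurve$. Instead, the subsolution's vorticity is placed on the two boundary curves $\map_\pm=\zcurve\pm tc\,\partial_s\zcurve_0^\perp$ (so $\bar\velocity$ is the explicit Biot--Savart field of two sheets, anti-holomorphic inside the strip), the Euler--Reynolds system reduces to a divergence-free boundary value problem for $R$ in the degenerating annulus (Proposition~\ref{prop:consmom}), and solvability with $R$ uniformly bounded as $t\downarrow0$ imposes only finitely many conditions: two integral conditions coming from Gauss's theorem and pointwise matching conditions at $t=0$ (Proposition~\ref{p:solvability}). These are satisfied by prescribing the Taylor jet $(\zcurve,\tilde\varpi)^{(n)}$, $n\le 3$, order by order (with $\zcurve^{(1)}=\BRO_0$) and closing the remainder by a finite-dimensional fixed point --- this is where the ill-posedness is genuinely circumvented and where $k_0=4$ is spent, via the H\"older estimates for the Birkhoff--Rott-type operators on the separating curves (the equi-chord-arc Lemma~\ref{geometriclemma} and the weighted Hilbert transforms of Section~\ref{sec:BRO}). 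None of this is present in the proposal.

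A second, smaller inaccuracy: the dissipation $D$ of the convex-integration solutions is not literally ``inherited'' from the subsolution; by Proposition~\ref{prop:dissipation} it contains the term $\int(v-\bar v)\cdot\nabla(e+p)\,\psi$, which depends on the individual weak solution and must be made small by the construction (it is controlled by an arbitrary error function $\mathcal T(t)$). For the global test function $\psi=\car{}$ the flux terms do drop out in the limit (Lemma~\ref{De}), so $\langle D(t),\car{}\rangle=-\int(e(t)-e(0))$ is determined by the prescribed energy profile; but you still must verify that this quantity is nonnegative, which requires the explicit computation of $[\bar p]_\lambda$, $[\bar v]_\lambda$ and $\mathring R$ on $\Gamma_\pm$ at $t=0$ (Propositions~\ref{prop:|R|} and~\ref{prop:D0}) and the choice $c\sim\delta\bar c_N|\varpi_0|$ with $0<\delta<1$. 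As it stands, the proposal is a correct roadmap but not a proof.
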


By \cite[\S B]{Onadmissibility}, these are ``dissipative solutions'' in the sense of Lions \cite{Lions}.

\smallskip

\subsection{Evolution of the vortex sheet}\hfill

By multiplying formally the momentum balance equation \eqref{Euler:1} by $\velocity$, it is straightforward to check that $D=0$ wherever $\velocity$ is smooth enough. Indeed, the critical smoothness in 3D is the subject of the recently resolved Onsager's conjecture. We refer to \cite{BDSV19,CET94,Eyi94,Ise18} as well as the surveys \cite{HPfluid,HdPDE}. In particular, in \cite{BDSV19,Onadmissibility,DLSz12} infinitely many admissible weak solutions with non-vanishing $D$ were constructed. Moreover, in \cite{DanSz,DanRSz,WSz} it was shown that the set of initial data admitting infinitely many admissible weak solutions (called `wild initial data') is $L^2$-dense.

Concrete examples of wild initial data were first found in \cite{Sze11}, where the second author constructed weak solution to (IE) with decreasing $E$ for the planar interface $\zcurve_0(s)=(s,0)$ with vortex sheet strength $\varpi_0= 2$, and observed that the maximal dissipation rate $\frac{\dif E}{\dif t}=-\frac{1}{6}$ determines uniquely the rate of expansion $c=\tfrac{1}{2}$ of the turbulence zone (cf.~Example~\ref{Ex:flat} below). This example then served as the basis for a large number of explicit non-uniqueness examples, see e.g.~\cite{Mixing,Degraded,ChDLK,Piecewise,gksz:2020,MK18,M20,Sze12}. A common property of these explicit wild initial data is the fact that they are associated with an unstable interface of discontinuity. 

One particularly well-studied example in recent years is concerning the unstable Muskat problem for the 2D incompressible porous media equations (IPM). This system models the evolution of two incompressible fluids with different constant densities, moving under the action of gravity through a 2D porous medium, once the heavier fluid meets the lighter one at some $\Gamma_0=\{(s,f_0(s))\,:\,s\in\R\}$ from above. As a result, the vorticity is concentrated on $\Gamma_0$ as in \eqref{omega0} with $\varpi_0=\partial_sf_0$. For the case of a flat initial interface $f_0=0$ the second author in \cite{Sze12} constructed weak solutions to (IPM), which recover the mixing behaviour expected at the interface: inside a linearly growing mixing zone $\Omega_{\mathrm{mix}}$ around the flat interface $f_0=0$ the density is a fine mixture of the two constant densities. As a byproduct, a promising connection between the corresponding subsolution and the Lagrangian relaxed solution obtained by the gradient flow approach of Otto \cite{Ott99} was established (cf.~\cite{M20}).
Subsequently, Castro, C\'ordoba and Faraco extended \cite{Sze12} to the case $f_0\in H^5(\R)$ \cite{Mixing} (see also \cite{Degraded}) by introducing a two-scale dynamics: in order to describe $\Omega_{\mathrm{mix}}$ one needs, in addition to the growth of the mixing zone, a second dynamics, relating to the evolution of the pseudo-interface $f(t,s)$. In \cite{Piecewise}, F\"{o}rster and the second author simplified the approach of \cite{Mixing}, obtaining the result for $f_0\in C_*^{3,\alpha}(\R)$.

Our construction for (IE) follows the approach in \cite{Mixing,Piecewise}. Let us describe the geometric setup.
At each time $t>0$, the turbulence zone $\Turzone(t)$ is defined as the annular region in $\R^2$ whose boundary is
\begin{equation}\label{def:turzone}
\partial\Turzone(t)=\Gamma_-(t)\cup\Gamma_+(t),
\end{equation}
where $\Gamma_{\pm}(t):=\map_{\pm}(t,\T)$ are the new evolving vortex sheets parametrized by the map
\begin{equation}\label{eq:map}
\map_{\lambda}(t,s)
:=\zcurve(t,s)+\lambda\vectV(t,s),
\end{equation}
where $\zcurve$ is an evolution of $\zcurve_0$ to be determined and $\vectV$ is an opening non-tangential vector field to this curve. As our result is local in time, it seems suitable to consider
\begin{equation}\label{ansatz:vectV}
\vectV(t,s):=t\vectW(s)
\quad\textrm{with}\quad
\vectW(s):=c(s)\partial_s\zcurve_0(s)^\perp,
\end{equation}
where $c\in C^{k_0,\alpha}(\T;\R_+)$ represents the (local) rate of expansion of the turbulence zone, to be determined.

We define also the open sets $\Omega_{\pm}(t)$ such that $\partial\Omega_{\pm}(t)=\Gamma_{\pm}(t)$; observe that, since $z_0$ and $z(t)$ are assumed to be positively oriented (cf.~\eqref{initial}), $\Omega_+(t)$ is the bounded open set \emph{inside} the Jordan curve $\Gamma_+(t)$, whereas $\Omega_-(t)$ is the unbounded open set \emph{outside} the Jordan curve $\Gamma_-(t)$. For each region $r\in\{-,+,\mathrm{tur}\}$, we define the space-time open sets $\Omega_{r}:=\{(t,x)\,:\,x\in\Omega_{r}(t),\,t\in[0,T]\}$. Finally, we set $\Gamma(t):=\Gamma_-(t)\cup\Gamma_+(t)$, $\Omega(t):=\Omega_-(t)\cup\Omega_+(t)$ and respectively $\Gamma$ and $\Omega$. 

\begin{figure}[h!]
	\centering
	\subfigure[Vortex sheet initial datum.]{\includegraphics[width=0.49\textwidth]{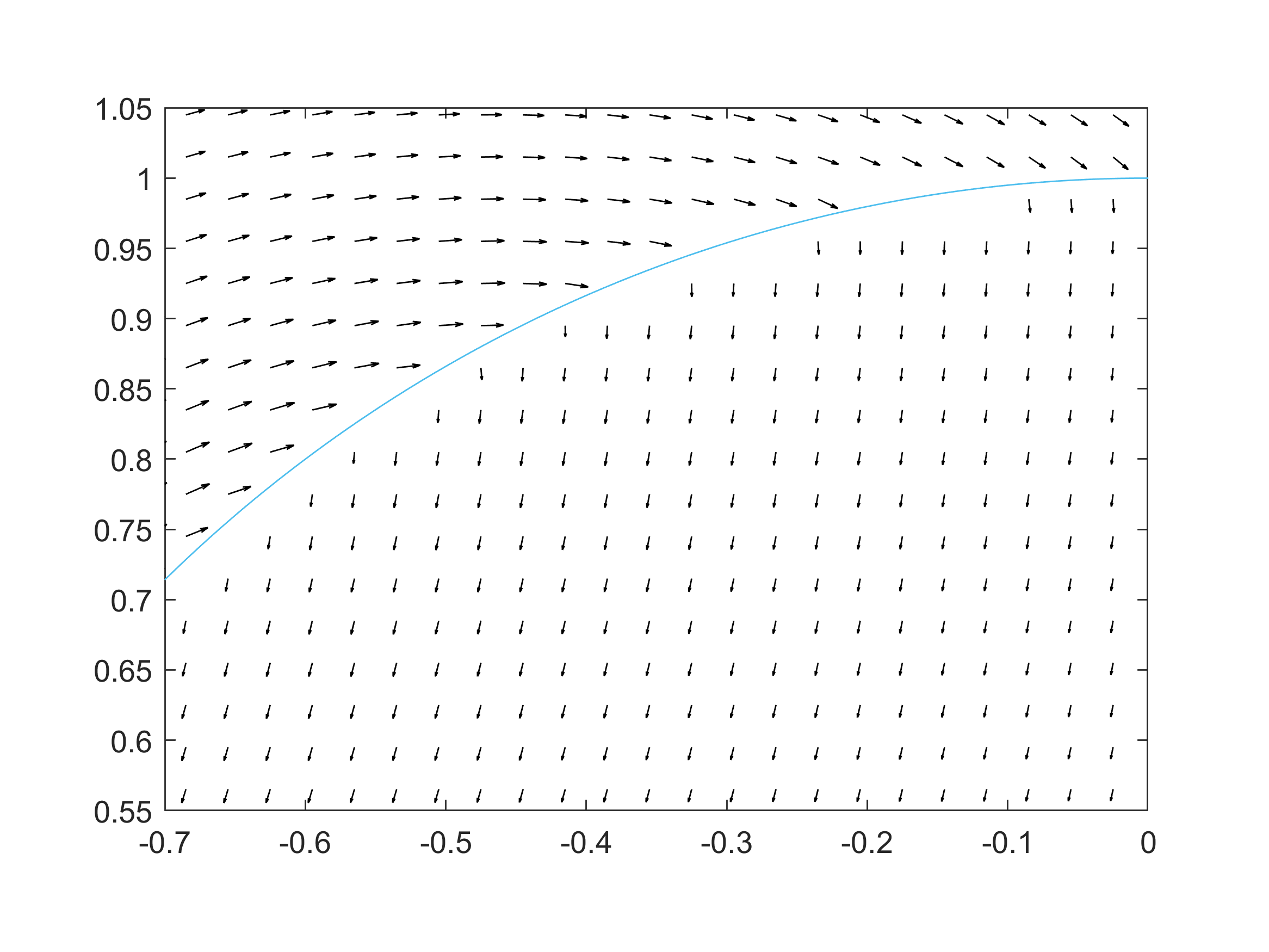}}\label{fig:initial}  
	\subfigure[The turbulence zone.]{\includegraphics[width=0.49\textwidth]{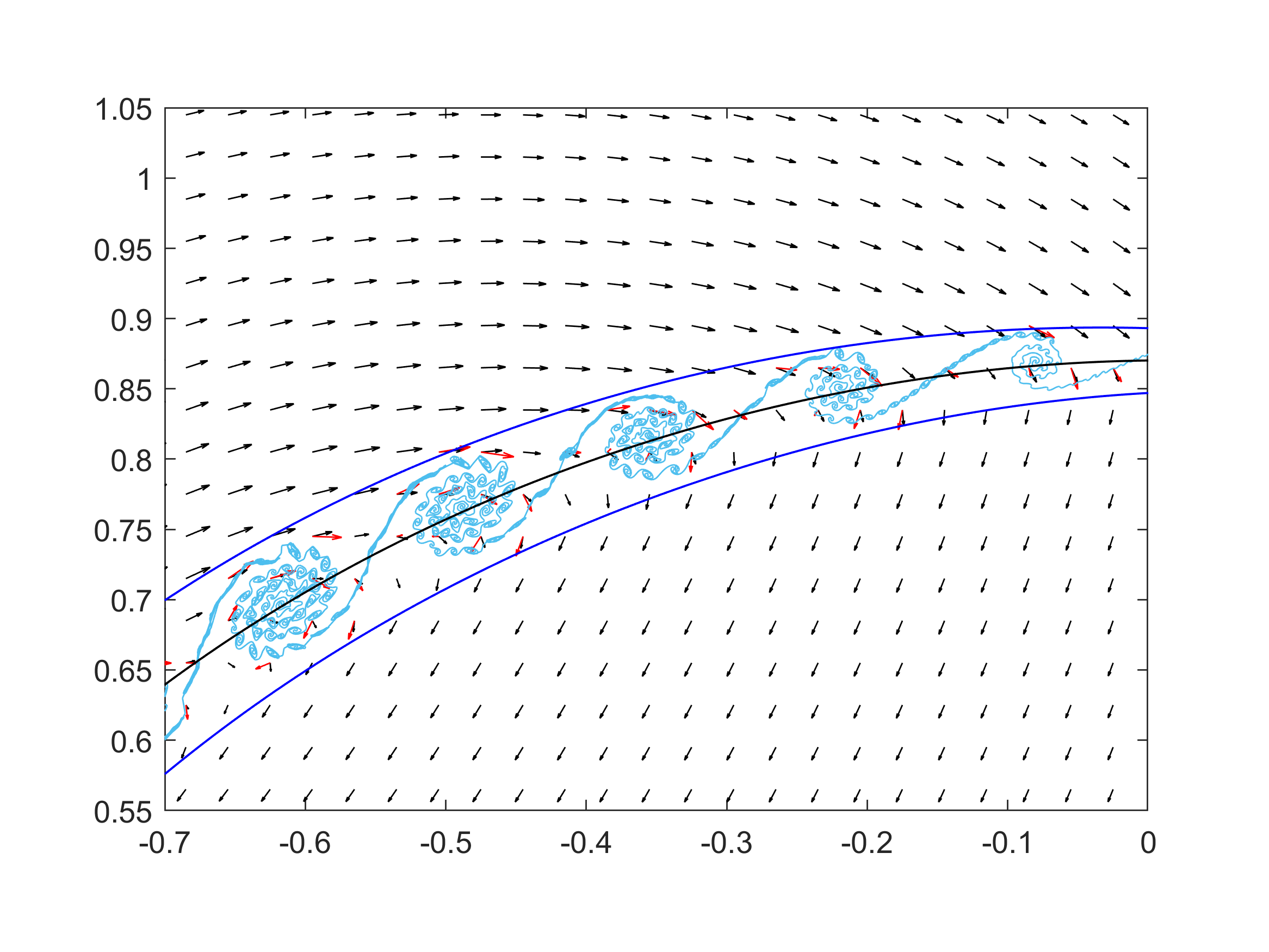}}   
	\caption{(a) The incompressible velocity field $\velocity_0$ with vortex sheet strength $\varpi_0$ along the interface $\zcurve_0$. (b) At some $t>0$, the macroscopic velocity field $\bar{\velocity}(t)$, the boundary of the turbulence zone $\map_{\pm}(t)=\zcurve(t)\pm ct\partial_s\zcurve_0^\perp$ (dark blue) for some $\zcurve(t)$ (black) and $c(s)\propto|\varpi_0(s)|$. Inside $\Turzone(t)$ we plot the velocity field
	$\velocity(t)$ (red) w.r.t.~the Kelvin-Helmholtz curve $\zcurve_{\mathrm{per}}(t)$ (light blue) which starts from a tiny perturbation of $\zcurve_0$ (cf.~\S\ref{sec:simulations}, $\delta=0.001$).}              
\end{figure}

\subsection{Energy dissipation rate}\hfill

A new feature of our approach is that we are able to link the energy dissipation rate at the vortex sheet with the growth of the turbulence zone. This is relevant in view of the search for selection criteria among infinitely many admissible weak solutions - one that has been intensively studied in recent years is motivated by the \emph{entropy rate admissibility criterion} introduced by C.~Dafermos \cite{D73}, and amounts, in a nutshell, to identifying those weak solutions which maximize the energy dissipation rate in some sense, see \cite{Fei14,ChK}. To explain this point, let us recall the construction of \cite{Sze11}. 

\begin{Ex}\label{Ex:flat}
Let $(\bar{v},\bar{p},\bar{R})$ be defined as
$$\bar{\velocity}=(\alpha,0),
\quad\quad\bar{p}=\tfrac{1}{2}\alpha^2-e,
\quad\quad\bar{R}=\left(\begin{array}{cc}
e-\tfrac{1}{2}\alpha^2&\gamma\\ \gamma&e-\tfrac{1}{2}\alpha^2
\end{array}\right),$$
for some (scalar) functions $\alpha,\gamma,e$ of $(t,x_2)$, with 
\begin{equation}\label{alpha0}
\alpha(0,x_2)=\alpha_0(x_2)=\sgn\, x_2.
\end{equation}
Observe that the initial datum $\velocity_0(x)=\bar{\velocity}(0,x)=(\alpha_0(x_2),0)$ is a shear flow whose vorticity is concentrated on the $x_1$-axis $\Gamma_0$ with density $\varpi_0=2$. It is straightforward to check that $\Div\bar{v}=0$ and 
$\partial_t\bar{v}+\Div(\bar{v}\otimes\bar{v}+\bar{R})+\nabla\bar{p}=0$ holds with $\bar{R}$ positive semidefinite if and only if
\begin{equation}\label{eq:flatcase}
\partial_t\alpha+\partial_{x_2}\gamma=0,\quad e\geq \tfrac{1}{2}\alpha^2+|\gamma|.
\end{equation}
As noted in \cite{Sze11} (based on \cite{Onadmissibility}; see also Theorem \ref{thm:hprinciple} below), under this condition $(\bar{v},\bar{p},\bar{R})$ is a subsolution. Moreover, if there exists a space-time open set $\Turzone$ such that $e>\tfrac{1}{2}\alpha^2+|\gamma|$ inside $\Turzone$ and $e=\tfrac12\alpha^2$ outside $\Turzone$, then for any $T>0$ there exist infinitely many weak solutions $(v,p)$ to (IE) on $[0,T]$ with $\velocity(0,x)=\velocity_0(x)$ such that, in $C_tL_{\textrm{loc}}^1$, 
$$
\frac{1}{2}|v|^2=e,\quad p=\bar{p}.
$$

Let us construct some explicit examples. First of all, 
fix $c>0$, let $\Turzone=\{|x_2|<ct\}$ (so that $\map_{\lambda}(s,t)=(s,\lambda ct)$, c.f.~\eqref{def:turzone}-\eqref{eq:map}), and set 
$$
\alpha=\pm 1,\quad\gamma=0,\quad e=\tfrac12\quad\textrm{ on }\Omega_{\pm}.
$$
It is clear that many choices exist inside $\Turzone$ which satisfy \eqref{eq:flatcase}. One simple choice is given by
$$
\alpha=0,\quad \gamma=-c,\quad e=c+e'\quad\textrm{ in }\Turzone,
$$
for any $e'>0$. Observe that $(\alpha,\gamma)$ is piecewise constant and the choice $\gamma=-c$ in $\Turzone$ ensures that the jump condition across $\Gamma$ arising from \eqref{eq:flatcase} holds. With these choices a quick calculation shows that in the limiting case $e'\downarrow0$ the dissipation (c.f.~Definition \ref{defi:dissipation}) of the weak solutions $(v,p)$ so obtained satisfies 
\begin{equation}
\langle D(t),\psi\rangle=c(\tfrac12-c)\sum_{\lambda=\pm 1}\int_0^t\int \psi(\tau,\map_\lambda(\tau,s))\dif s\dif\tau,
\end{equation}
so that in particular
$$
\frac{d}{dt}\bigg\vert_{t=0}\langle D(t),\psi\rangle =2c(\tfrac12-c)\int\psi(0,s,0)\dif s.
$$

Observe that in this example the vorticity of the corresponding $\bar{\velocity}(t)$ is concentrated on the two lines $\{x_2=\pm ct\}$. Analogusly, if we define $\alpha,\gamma$ inside $\Turzone$ in such a way that the vorticity of $\bar{\velocity}$ is concentrated on the $2N$ lines $\{x_2=\lambda_jct\}$ with $\lambda_{\pm j}=\pm\tfrac{2|j|-1}{2N-1}$ for $j=1,\ldots,N$, we obtain the piecewise constant solution
$$
\alpha(t,x_2)
=\left\lbrace\begin{array}{rl}
\pm\tfrac{j}{N}, & \lambda_jct<\pm x_2<\lambda_{j+1}ct, \quad j=1,\ldots,N-1,\\[0.1cm]
0, & |x_2|<\lambda_1ct,
\end{array}\right.
$$ 
with piecewise constant $\gamma$ determined uniquely by the jump conditions arising from \eqref{eq:flatcase}, namely $\gamma=-\tfrac{N}{2N-1}c(1-\alpha^2)$, and $e=\tfrac12\alpha^2+|\gamma|+e'$ for some $e'>0$. We note in passing that such construction was used in the context of the compressible Euler system under the name ``fan subsolution'', see \cite{ChDLK}. In this case it can be checked that, with $[\cdot]_{\lambda}$ denoting the jump across $\{x_2=\lambda ct\}$, as $e'\downarrow 0$ we have
$$
\langle D(t),\psi\rangle
=\tfrac{N}{2N-1}c\left(\tfrac{2N-1}{2N}-c\right)\sum_{1\leq|j|\leq N}\int_0^t\int\lambda_j[\alpha^2]_{\lambda_j}\psi(\tau,\map_{\lambda_j}(\tau,s))\dif s\dif\tau.
$$
Indeed, by the definition of $\lambda_j$ and $\alpha$,
$$\lambda_j[\alpha^2]_{\lambda_j}
=\tfrac{(2j-1)^2}{(2N-1)N^2}>0,
\quad 1\leq|j|\leq N.$$
Hence, by applying Faulhaber's formula we get
$$\sum_{1\leq |j|\leq N}\lambda_j[\alpha^2]_{\lambda_j}
=\tfrac{2}{3}\tfrac{2N+1}{N},$$
which allows to compute the associated energy dissipation rate at the initial time
\begin{equation}\label{DN:flat}
\frac{\dif}{\dif t}\bigg\vert_{t=0}\langle D,\psi\rangle
=2\bar{a}_Nc(2\bar{c}_N-c)\int\psi(0,s,0)\dif s,
\end{equation}
where 
\begin{equation}\label{acN}
\bar{a}_N:=\frac{1}{3}\frac{2N+1}{2N-1}, \quad\quad\bar{c}_N:=\frac{2N-1}{4N}.
\end{equation}
Finally, the limiting case $N\uparrow\infty$ can be understood as distributing the vorticity on the whole turbulence zone. This corresponds to the rarefaction wave solution (c.f.~\cite{Sze11})
$$
\alpha(t,x_2)=\frac{x_2}{ct},
\quad |x_2|<ct,
$$
with $\gamma=-\tfrac{1}{2}c(1-\alpha^2)$. Observe that $\alpha,\gamma$ are continuous now.
In this case, as $e'\downarrow 0$ we have
$$\partial_te
=\tfrac{1}{2}(1-c)\partial_t(\alpha^2)
,$$
and hence the corresponding dissipation satisfies
$$
\langle D(t),\psi\rangle
=\tfrac{1}{2}(1-c)\int_0^t\int\int_{\{|x_2|<c\tau\}}
[\partial_{t}(\alpha^2)
-\tfrac{1}{2}\velocity\cdot\nabla(\alpha^2)]\psi\dif x_2\dif x_1\dif\tau,
$$
which allows to compute the associated energy dissipation rate at the initial time
$$
\frac{\dif}{\dif t}\bigg\vert_{t=0}\langle D,\psi\rangle
=\tfrac{2}{3}c(1-c)\int\psi(0,s,0)\dif s,
$$
which is indeed the limit of \eqref{DN:flat} as $N\uparrow\infty$. 
In particular, the maximum $\tfrac{\dif E}{\dif t}=-\tfrac{1}{6}$ is achieved at $c=\tfrac{1}{2}$.
\end{Ex}

In this paper we are considering that both $\varpi_0$ and $c$ could depend on $s$. Inspired by \eqref{DN:flat} and recalling that in that example $\varpi_0(s)= 2$, we introduce the following functional.
\begin{defi}
For given $\varpi_0,c\in C(\T;\R)$, any $N\in\N$ and any interval $I\subset\T$ we define the energy dissipation functional 
\begin{equation}\label{e:DF}
W^{(N)}_{I}(c):=\bar{a}_N\int_{I}c(s)|\varpi_0(s)|(\bar{c}_N|\varpi_0(s)|-c(s))\dif s,
\end{equation}
with $\bar{a}_N$, $\bar{c}_N$ given in \eqref{acN}.
\end{defi}

It turns out that global admissibility, as in Definition \ref{def:globalEI} leads to the following relation:

\begin{thm}\label{thm:globalrate}
For any $N\in\N$ there exist admissible weak solutions as
in Theorem \ref{thm:global}, such that the rate of dissipation and expansion of $\Turzone$ are related via
\begin{equation}\label{global:1}
\int_{\R^2}\frac{e(t_2)-e(t_1)}{t_2-t_1}\dif x
=-W_{\T}^{(N)}(c)+\mathcal{O}(t_2),
\end{equation}
for every $0\leq t_1<t_2\leq T$.
\end{thm}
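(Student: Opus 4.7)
The strategy is to extend the piecewise-constant ``$2N$-strip'' subsolution of Example~\ref{Ex:flat} from the flat interface to the curved geometry determined by $\zcurve_0$, allowing $\varpi_0$ and $c$ to vary with $s$. The turbulence zone $\Turzone(t)$ is partitioned into $2N$ curvilinear sub-strips separated by the transport curves
\[
\map_{\lambda_j}(t,s):=\zcurve(t,s)+\lambda_j t\vectW(s),\qquad \lambda_{\pm j}:=\pm\tfrac{2|j|-1}{2N-1},\ j=1,\dots,N,
\]
where $\zcurve(t)$ is the analytic evolution of $\zcurve_0$ chosen as part of the construction, so that $\map_{\pm 1}(t,\T)=\Gamma_\pm(t)=\partial\Turzone(t)$. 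Outside $\Turzone$ the subsolution is taken equal to the analytic Biot--Savart field generated by $(\zcurve(t),\varpi(t))$, ensuring that the initial datum \eqref{BSlaw} is matched and $\bar{R}\equiv 0$ there.

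Inside each sub-strip the subsolution is built in the local orthonormal frame $(\partial_s\zcurve_0,\partial_s\zcurve_0^\perp)$, mimicking pointwise in $s$ the flat recipe under the substitutions $2\mapsto|\varpi_0(s)|$ and $c\mapsto c(s)$. Concretely, the ``tangential'' profile $\alpha$ takes the piecewise constant values $\pm\tfrac{j}{N}\tfrac{|\varpi_0(s)|}{2}$ on the $j$-th sub-strip (with sign dictated by $\sgn\,\varpi_0(s)$), the ``transverse'' profile $\gamma$ is determined uniquely by the Rankine--Hugoniot jump condition from \eqref{eq:flatcase} across each $\map_{\lambda_j}$, and the energy density is taken of the form $e=\tfrac12\alpha^2+|\gamma|+e'$ for some small $e'>0$. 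By construction the relaxed equation \eqref{eq:flatcase} is satisfied, the strict inequality $e>\tfrac12\alpha^2+|\gamma|$ holds inside $\Turzone$, and the curvature of $\zcurve_0$ only enters through an analytic $1+\mathcal{O}(t)$ Jacobian factor.

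An application of the h-principle (Theorem~\ref{thm:hprinciple}) then converts this subsolution into infinitely many admissible weak solutions $(\velocity,p)$ of (IE) with initial datum \eqref{BSlaw}, satisfying $\tfrac12|\velocity|^2=e$, $p=\bar{p}$ in $C_tL^1_{\mathrm{loc}}$, and analytic outside $\Turzone$. To compute the rate, one starts from $\int_{\R^2}(e(t)-e(0))\dif x=-\langle D(t),\car{}\rangle$ (Definition~\ref{defi:dissipation}) and evaluates $\langle D(t),\car{}\rangle$ as a sum of Rankine--Hugoniot-type jump integrals over the $2N$ interfaces $\map_{\lambda_j}(t,\T)$, weighted pointwise by the arc-length Jacobian. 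Reproducing Faulhaber's summation that led to \eqref{DN:flat} but now pointwise in $s$, and then taking $e'\downarrow 0$, yields
\[
\tfrac{\dif}{\dif t}\big|_{t=0}\langle D(t),\car{}\rangle=W_{\T}^{(N)}(c);
\]
dividing the integrated identity by $t_2-t_1$ gives \eqref{global:1}, with the $\mathcal{O}(t_2)$ remainder collecting the analytic $t$-dependence of the Jacobian and of $(\zcurve(t),\varpi(t),\partial_s\zcurve(t))$.

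The principal technical obstacle is to produce such a subsolution that is simultaneously strictly subsolutional in all $2N$ sub-strips \emph{uniformly in} $s\in\T$, matches the analytic exterior across $\Gamma_\pm$, and is robust to the absence of distinguished sign of $\varpi_0$. In particular, where $\varpi_0(s)$ crosses zero both $\alpha$ and the jumps $[\alpha^2]_{\lambda_j}$ collapse, so the strict positivity of the defect $e-\tfrac12\alpha^2-|\gamma|$ near such points must be supplied entirely by $e'>0$ in conjunction with the energy levels chosen on $\Omega_\pm$; controlling these degenerate regions while keeping the curvature correction uniformly $\mathcal{O}(t)$ in $s$ is the core difficulty of the argument.
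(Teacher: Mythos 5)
Your proposal diverges from the paper at the level of the basic ansatz, and the divergence introduces a genuine gap. The paper does \emph{not} lift the piecewise-constant fan of Example~\ref{Ex:flat} into a moving frame; instead it takes $\bar\omega(t)$ to be a measure concentrated on the $2N$ curves $\map_{\lambda_j}(t)$ and defines $\bar{\velocity}(t)=\BSO(\bar\omega(t))$, so that $\bar{\velocity}$ is piecewise \emph{harmonic} (anti-holomorphic) between the curves. The reason this matters is that the constraint $\Div\bar{\velocity}=0$ is not relaxed by the Reynolds stress: a field that is ``piecewise constant in the frame $(\partial_s\zcurve_0,\partial_s\zcurve_0^\perp)$'' with tangential profile $\alpha(s)$ has divergence of order $\partial_s\alpha+\kappa_0\alpha$, which is $\mathcal{O}(1)$, not $\mathcal{O}(t)$, so your candidate is not even an admissible velocity for a subsolution once $\zcurve_0$ is curved or $\varpi_0$ varies. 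Likewise, on $\Gamma_\pm$ your interior values must be matched (through the jump conditions \eqref{R:2}) against the traces of the exterior Biot--Savart field, which are the nonlocal quantities $\BRO_\pm\mp\tfrac14\varpi_\pm/\partial_s\map_\pm^*$; a local, pointwise-in-$s$ recipe cannot reproduce these, and the mismatch has to be absorbed by $R$.

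This is precisely where the substance of the paper's proof lives and where your proposal is silent. One must (i) solve the boundary value problem $\Div R=0$ in the annulus $\Turzone(t)$ with prescribed normal traces $\B_{\pm}$ on both components of $\partial\Turzone(t)$, which requires the integral compatibility conditions \eqref{bcond} (Lemma~\ref{R:solution}); (ii) keep $R$ \emph{uniformly bounded} as the annulus degenerates ($t\downarrow 0$), which forces the pointwise matching conditions of Proposition~\ref{p:solvability} and Proposition~\ref{prop:|R|}; and (iii) satisfy all of these by choosing the Taylor coefficients $\zcurve^{(1)}=\BRO_0$, $\zcurve^{(2)}$, $\zcurve^{(3)}$, $\tilde\varpi^{(n)}$ and a fixed-point remainder, which in turn requires the mapping properties of the Birkhoff--Rott type operators established in Section~\ref{sec:BRO}. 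Your phrase ``$\zcurve(t)$ is the analytic evolution of $\zcurve_0$ chosen as part of the construction'' hides all of this. Finally, in the paper's computation of the initial dissipation rate (Proposition~\ref{prop:D0} and Section~\ref{sec:piecewise}) the nonlocal term $B_0=\BRO_0\cdot((\sgn\varpi_0+i)\partial_s\zcurve_0)$ appears on each interface and cancels only upon summing the pairs $\pm\lambda_j$; a purely flat-case Faulhaber computation never sees this term, so the claimed identity $\tfrac{\dif}{\dif t}\big|_{t=0}\langle D,\car{}\rangle=-W_{\T}^{(N)}(c)$ is not justified by your argument as written. Your closing paragraph also locates the difficulty with sign changes of $\varpi_0$ in the wrong place: for the global statement only $\varpi_0\not\equiv 0$ is needed, and the delicate point is avoiding division by $\varpi_0$ in the choice of $\tilde\varpi^{(2)}$ and in the remainder ansatz \eqref{ansatzZ}, not the positivity of $e-\tfrac12\alpha^2-|\gamma|$.
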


In particular, for zero-mean $\varpi_0$'s, \eqref{global:1} yields
$$
\frac{\dif E}{\dif t}=-W_{\T}^{(N)}(c)+\mathcal{O}(t).
$$
Note that the functional $W_{\T}^{(N)}$ is strictly concave and has a unique global maximum (cf.~Lemma \ref{lemma:Wmax}) at $c_{\max}^{(N)}:=\frac{1}{2}\bar{c}_N|\varpi_{0}|$, with  
$$
\max W_{\T}^{(N)}=W_{\T}^{(N)}(c_{\max}^{(N)})=\frac{1}{48}\left(1-\frac{1}{(2N)^2}\right)\int_{\T}|\varpi_{0}|^3\dif s.
$$
Thus, by considering $N\uparrow\infty$ we reach the maximal initial dissipation rate
$\frac{\dif E}{\dif t}\big|_{t=0}=-\frac{1}{48}\norma{\varpi_0}{3}^3$ obtainable 
by our method, which agrees with \cite{Sze11}. 

\smallskip

The relationship \eqref{global:1} shows that the growth rate of the turbulence zone $c$ cannot be arbitrarily large, but does not give precise information about its local growth. For this reason, we test $D$ with a larger class of $\psi$'s, instead of just $\psi=\car{}$. 
As we shall see in Section \ref{sec:Admissibility} (cf.~Propositions~\ref{prop:dissipation}-\ref{prop:D0}), the dissipation $D$ from Theorem~\ref{thm:globalrate} belongs to $\mathcal{M}_c([0,T]\times\R^2)$ with $\sop\, D\subset\bar{\Omega}_{\mathrm{tur}}$, so we can extend the space of test functions $\psi$ to indicator functions. With this notion we obtain a local version of Theorem~\ref{thm:globalrate}.

\begin{thm}\label{thm:local} Given $0<\varepsilon\leq\ell$, let \eqref{initial} with  $k_0=4$, $\alpha>0$ and $|\varpi_0|>0$ a.e. Then, for any $N\in\N$ there exist infinitely many dissipative solutions to $\mathrm{(IE)}$ for the vortex sheet initial datum \eqref{BSlaw} so that the local rate of dissipation and expansion of $\Turzone$ are related via
\begin{equation}\label{local:1}
\left\langle\frac{D(t_2)-D(t_1)}{t_2-t_1},\psi_I\right\rangle
=W_{I}^{(N)}(c)+\mathcal{O}(t_2),
\end{equation}
where $\psi_I(t,x):=\car{\map(t;I\times[-1,1])}(x)$, for every interval $I\subset\T$ with $|I|\geq\varepsilon$ and $0\leq t_1<t_2\leq T$.
\end{thm}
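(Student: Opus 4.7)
The natural plan is to reuse the subsolution $(\bar{\velocity},\bar{p},\bar{R})$ constructed for Theorem~\ref{thm:globalrate} and to show that its associated dissipation measure has a local density which, to leading order in $t$, equals the $W^{(N)}$--integrand. Three ingredients are needed: (i) the dissipation is a compactly supported Radon measure that can be tested against characteristic functions, (ii) inside $\Turzone$ the $(\bar{\velocity},\bar{p},\bar{R})$ chosen above looks, near each $s\in\T$, like a rescaled copy of the fan subsolution of Example~\ref{Ex:flat} with parameters $(\varpi_0(s),c(s))$, and (iii) the deviations from this ``locally flat'' picture are $\mathcal{O}(t)$.

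For (i) one invokes Propositions~\ref{prop:dissipation}--\ref{prop:D0}, which say that $D\in\mathcal{M}_c([0,T]\times\R^2)$ with $\sop D\subset\bar{\Omega}_{\mathrm{tur}}$. Since the h-principle of \cite{Onadmissibility} (Theorem~\ref{thm:hprinciple} in this paper) produces weak solutions $(\velocity,p)$ with $\tfrac{1}{2}|\velocity|^2=e$ and $p=\bar{p}$ in $C_tL_{\mathrm{loc}}^1$, the distributional identity \eqref{D(t)} reduces, on the closed set $\bar{\Omega}_{\mathrm{tur}}$, to a balance law for the subsolution energy $e$, the subsolution pressure $\bar{p}$ and the velocity $\velocity$. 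The compactness of $\sop D$ and its representation by a bounded measure then permit us to pass to the limit $\psi\to\car{\map(t;I\times[-1,1])}$ by truncation and mollification, after verifying that $\map(t,\cdot,\cdot)$ is a $C^1$-diffeomorphism for $t$ small (a consequence of \eqref{eq:map}--\eqref{ansatz:vectV} and the regularity \eqref{initial}).

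For (ii) one changes variables via $\map$ and writes $(\bar{\velocity},\bar{p},\bar{R})\circ\map$ in the ``flat'' coordinates $(s,\lambda)$. At $t=0$ the Jacobian of $\map$ equals $|\partial_s\zcurve_0|\cdot c(s)t$ in the transverse direction, so after rescaling $\lambda=x_2/(ct)$ the equations \eqref{eq:flatcase} that define the subsolution in the flat case reappear on each transverse cross-section, with $\alpha$ piecewise constant on $2N$ sublevel lines $\{\lambda=\lambda_j\}$ and prescribed jumps proportional to $\varpi_0(s)$. The local density of $D$ at $(0,\zcurve_0(s))$ is then obtained, exactly as in Example~\ref{Ex:flat}, by summing $\lambda_j[\alpha^2]_{\lambda_j}$ and applying Faulhaber's formula; the answer is precisely the integrand $\bar{a}_Nc(s)|\varpi_0(s)|(\bar{c}_N|\varpi_0(s)|-c(s))$ defining $W_I^{(N)}(c)$. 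Integration against $\psi_I$ and division by $(t_2-t_1)$ yields \eqref{local:1} with an error controlled by the difference quotient of $D$.

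For (iii), the error $\mathcal{O}(t_2)$ arises from three sources: the curvature of $\zcurve_0$ (so $\map$ is not affine), the tangential drift $r\partial_s\zcurve$ in \eqref{BReq:1} carried by $\zcurve(t)$, and higher-order terms in the decomposition $\varpi=\varpi_0+\partial_s\tilde{\varpi}$ coming from \eqref{varpi:1}. Each of these contributes perturbations of size $t$ to $\bar{\velocity}$, $\bar{p}$, $\bar{R}$, and hence of size $t$ to the density of $D$ after division by the volume factor $ct$. The main obstacle, as I see it, is not the flat-case bookkeeping but rather justifying the passage $\psi\to\car{\map(t;I\times[-1,1])}$ at the interior boundary $\map(t,\partial I\times[-1,1])$: one must show that $D$ puts no mass on this transverse section, which requires uniform estimates on the subsolution near $\partial I$ and a quantitative bound (in particular, that $|\varpi_0|>0$ a.e., used here, prevents degeneration $c(s)\to 0$).
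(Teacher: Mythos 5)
Your outline correctly identifies the architecture the paper uses (Proposition~\ref{prop:dissipation} to represent $D$, Proposition~\ref{prop:D0} to evaluate it against $\psi_I$, and the fact that the leading-order density of $D$ on $\Gamma_{\lambda_j}$ reproduces the fan computation of Example~\ref{Ex:flat} via Faulhaber's formula), but two essential points are missing. First, the dissipation of the weak solution $v$ does \emph{not} reduce to a balance law for subsolution quantities: after substituting $\tfrac12|v|^2=e$ and \eqref{pbarp}, the fluctuation term $\int_0^t\int(e+p)(v-\bar v)\cdot\nabla\psi\,\dif x\dif\tau$ (term \eqref{dissipation:1:3}) survives and involves $v$ itself, which is only weakly close to $\bar v$. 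The paper handles this by choosing, within the convex integration scheme, weak solutions adapted to the compact family $\{\psi_I:|I|\geq\varepsilon\}$ so that this term is bounded by a prescribed time-error function $\mathcal{T}(t)$ uniformly in $I$ — this is precisely where the hypothesis $|I|\geq\varepsilon$ enters. Your proposal omits this entirely and instead assigns the role of $\varepsilon$ to a non-issue: since $D$ is given explicitly by a surface density on $\Gamma_\pm$ plus an absolutely continuous part on $\Turzone$ (formula \eqref{eq:dissipation}), the transverse sections $\map(t,\partial I\times[-1,1])$ carry no mass, so the passage $\psi\to\psi_I$ is harmless.

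Second, the claim that the constructed subsolution ``looks like a rescaled fan subsolution'' with the correct jumps is not automatic; it is the content of the trace computations and of Proposition~\ref{prop:|R|}. One needs $\BRO_{\lambda}^{(0)}=\BRO_0-\lambda\bar c_N\varpi_0\partial_s\zcurve_0$ (Corollary~\ref{cor:BRO0} and \eqref{BRON}), and one must \emph{choose} the free parameters of $R$ (the conditions \ref{prop:|R|:1}--\ref{prop:|R|:2}, $H=l_3=0$, $\tr R^{\sharp(0)}=2\dev{\B}^{(0)}\cdot\partial_s\zcurve_0$) so that $|\mathring R^{\sharp(0)}|=\tfrac{|\lambda|}{|\Lambda|}c|\varpi_0|$ matches the flat-case $|\gamma|$; for a generic solution of the boundary value problem \eqref{R} the density of $D$ at $t=0$ would be different. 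Moreover, the local density on each individual sheet contains a cross term $B_0=\BRO_0\cdot((\sgn\varpi_0+i)\partial_s\zcurve_0)$ coming from the transport by $\BRO_0$, which is invisible in a naive flat rescaling and only cancels after summing over the symmetric pairs $\pm\lambda_j$. Finally, all of this presupposes that a subsolution satisfying conditions \ref{Bq}--\ref{prop:|R|:2} exists, i.e.\ the iterative determination of $(\zcurve,\tilde\varpi)^{(1)},\dots,(\zcurve,\tilde\varpi)^{(3)}$ and the fixed-point argument for the remainder in Section~\ref{sec:proof}; ``reuse the subsolution from Theorem~\ref{thm:globalrate}'' is circular here, since both theorems rest on that same construction.
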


Since $\psi_{\T}=\car{\bar{\Omega}_{\mathrm{tur}}}$, Theorem~\ref{thm:local} generalizes Theorem~\ref{thm:globalrate} when $\varepsilon=\ell$. For small times, \eqref{local:1} can be viewed as testing $D$ with (space-time) cylinders  $[0,T]\times B_r(z_0)$ for every $z_0\in\Gamma_0$ and $r\gtrsim\varepsilon$, thus preventing local creation of kinetic energy (cf.~\cite{DR00}) across $\Gamma_0$ on length scales $\gtrsim\varepsilon$.

\smallskip

One possible choice of $c$ in Theorem~\ref{thm:local} is
\begin{equation}\label{c}
c=\delta\bar{c}_N|\varpi_0|\ast\eta_\epsilon,
\end{equation}
for any $0<\delta<1$ and some $\epsilon(\varpi_0,\varepsilon,\delta)\geq 0$,
where $(\eta_\epsilon)$ is a standard mollifier (cf.~Lemma \ref{lemma:T3}). In particular, the dissipation rate is maximazed at $\delta=\tfrac{1}{2}$ as $\epsilon\downarrow 0$.

\smallskip

It would be interesting to explore the question whether the bounds obtained in Theorems \ref{thm:globalrate}-\ref{thm:local} for the energy dissipation rate are optimal. Also, a natural and very interesting question is whether one can show convergence of the vortex blob approximation \cite{caflischlowengrub} in a suitable weak sense to an admissible subsolution as in Example \ref{Ex:flat}, see also Definition \ref{defi:subsolution}.



\smallskip

These results admit some immediate improvements and generalizations.
\begin{itemize}
	\item The weak solutions from Theorems \ref{thm:global}-\ref{thm:local} belong to the stronger class $C_tL_{\mathrm{loc}}^q$ for all $1< q<\infty$. In particular, for zero-mean $\varpi_0$'s, also $\velocity\in C_tL^q$.
	\item We have chosen the case of closed curves ($\zcurve_0(s+\ell)=\zcurve_0(s)$) for simplicity.
	The periodic ($\zcurve_0(s+n)=\zcurve_0(s)+n$, $n\in\ell\Z$) and the asymptotically flat ($\zcurve_{0}(s)\sim s$ as $|s|\rightarrow\infty$) cases require the analysis of a Birkhoff-Rott type operator similar to the one \eqref{BRO:0} we study in Section \ref{sec:BRO}. For the first one, the Biot-Savart kernel is $\tfrac{1}{2\pi i}\cot x$ instead of $\tfrac{1}{2\pi i}x^{-1}$. For the second one, the domain is $\R$ instead of $\T$, thus increasing the computations to control the tails. However, we believe the same results should be proved similarly. Conversely, in our case we have to deal with the fact that $\Turzone(t)$ is not simply connected (cf.~Section \ref{sec:Helmholtz} and Lemma~\ref{R:solution}).
	\item The regularity required in Theorems~\ref{thm:global}-\ref{thm:local} is used to control, with relatively simple estimates, $\norma{\zcurve}{2,\alpha}$ and $\norma{\varpi}{1,\alpha}$. Thus, a finer analysis of Section \ref{sec:BRO} may reduce $k_0$.
\end{itemize}

At the same time there are several shortcomings. 
\begin{itemize}
	\item Our results are local in time. The time of existence $T$ in Theorems \ref{thm:global}-\ref{thm:globalrate} depends on  $\norma{\zcurve_0}{k_0+1,\alpha}$, $\norma{\varpi_0}{k_0,\alpha}$ and the chord-arc constant $\CA{\zcurve_{0}}$ (see \eqref{CAC}). The time $T_{\varepsilon}\leq T$ for which we can guarantee \eqref{local:1} satisfies (see Lemma~\ref{lemma:T3}) $$T_{\varepsilon}\gtrsim\min_{|I|\geq\varepsilon}\dashint_I|\varpi_0|^3\dif s.$$ 
	In addition, $T_{\varepsilon}$ depends on $1/c\sim 1/(|\varpi_0|*\eta_{\epsilon(\varepsilon)})$.
	Thus, $T_{\varepsilon}\gg 0$ independently of $\varepsilon$ provided $|\varpi_0|\gg0$. Otherwise, more terms should be controlled in Section \ref{sec:Admissibility} to avoid $T_{\varepsilon}\downarrow 0$ as $\varepsilon\downarrow 0$.
	\item Although $\omega_0\in D^+$ when $\varpi_0\geq 0$, in general for $t>0$ we only know that $\omega(t)$ is a distribution, although we can guarantee at least the vorticity balance
	$$\langle\omega(t),\car{}\rangle
	=\int\omega_0.$$
\end{itemize}

\subsection{Organization of the paper}\hfill 

We start Section \ref{sec:subsolution} recalling the concept of subsolution as well as the subsolution criterion for (IE). After that, we establish the conditions under which a subsolution exists. In addition, we determine in Section \ref{sec:Admissibility} the dissipation $D$ of the weak solutions obtained via convex integration applied to the subsolution. In Section \ref{sec:BRO} we analyse the corresponding Birkhoff-Rott type operator. Finally, we prove in Section \ref{sec:proof} the Theorems~\ref{thm:global}-\ref{thm:local}. The parameter $N$ in Theorems \ref{thm:globalrate}-\ref{thm:local} relates to the following ansatz: the initial vortex sheet \eqref{omega0} is split at time $t=0$ into a sum of $2N$ vortex sheets, which separate at linear speeds for $t>0$. The outmost curves form the boundaries of the turbulence zone $\Turzone$. In Sections \ref{sec:subsolution}-\ref{sec:proof} we will concentrate on the case $N=1$ for simplicity, and will show how to extend these considerations to general $N$ in Section \ref{sec:piecewise}. Finally, we provide some pictures of how these solutions may look like in the Appendix \ref{sec:simulations}.

\smallskip

\subsection{Notation}\label{sec:Notation}

\begin{enumerate}[(i)]
	\item\textit{Lebesgue spaces}. For $1\leq p\leq\infty$ let $L^p$ be the space of $p$-integrable (or essentially bounded for $p=\infty$) functions with norm $\norma{\cdot}{p}$.
	\item\textit{H\"{o}lder spaces}. For $k\geq 0$ and 	$\alpha>0$ let $C^{k,\alpha}$ be the space of $k$-times differentiable functions with norm
	$$\norma{f}{k,\alpha}:=\sup_{j\leq k}\norma{\partial^jf}{\infty}
	+\snorma{\partial^kf}{\alpha}
	\quad\textrm{where}\quad
	\snorma{f}{\alpha}:=\sup_{s,\xi}\frac{|f(s+\xi)-f(s)|}{|\xi|^\alpha}.$$
	\item\label{Notation:matrix}\textit{Matrix sets}. Let
	$\mathcal{S}:=\{R\in\R^{2\times 2}\,:\,R^t=R\}$ and denote $R_{\mathrm{sym}}:=\mathrm{proj}_{\mathcal{S}}R=\tfrac{1}{2}(R+R^t)$.  We consider its subsets
	$\mathcal{S}_\pm:=\{R\in\mathcal{S}\,:\,\pm R\geq 0\}$ and $\mathcal{S}_0:=\{R\in\mathcal{S}\,:\,\tr R=0\}$, and denote $R^{\pm}:=\mathrm{proj}_{\mathcal{S}^{\pm}}R$ and $\mathring{R}:=\mathrm{proj}_{\mathcal{S}_0}R=R-\tfrac{1}{2}(\tr R)\mathrm{Id}$ with $|\mathring{R}|:=\lambda_{\max}(\mathring{R})=\max_{|\xi|=1}\mathring{R}\xi\cdot\xi$ by the operator norm of $\mathring{R}$ (the maximum eigenvalue).
	
	\item\label{Notation:complex}\textit{Complex coordinates}. We identify $\R^2$ with the complex plane $\C$ as usual $z=(z_1,z_2)=z_1+iz_2$ with $i=(0,1)$ the imaginary unit. Thus, without any ambiguity we shall denote $z^*=(z_1,-z_2)=z_1-iz_2$, $z\cdot w=\Re(zw^*)=z_1w_1+z_2w_2$ and $iz=z^\perp=(-z_2,z_1)$ interchangeably. Given $0\neq w\in\C$, we denote $\Re_{w},\Im_{w}:\C\rightarrow\R$ by the projections
	$$
	\Re_{w}z=\frac{z\cdot w}{|w|^2}=\Re\left(\frac{z}{w}\right),\quad\quad
	\Im_{w}z=\frac{z\cdot w^\perp}{|w|^2}=\Im\left(\frac{z}{w}\right),
	$$
	with which any $z\in\C$ can be expressed as
	$$z=(\Re_{w}z+i\Im_{w}z)w.$$
	We consider the isomorphism $\mathrm{M}:\C\rightarrow\mathcal{S}_0$ given by
	$$\mathrm{M}(z)
	=\left(\begin{array}{rr}
	-z_1 & z_2 \\
	z_2 & z_1
	\end{array}\right),$$
	which satisfies the identity
	$$
	\mathrm{M}(z)w
	=-(zw)^*.
	$$
	Note that $|\mathrm{M}(z)|=|z|$ (where $|\cdot|$ denotes the operator norm of the matrix on the l.h.s.~and the norm of the complex number on the r.h.s.).
	Given $\Omega\subset\C$ open, any $f\in C^1(\Omega;\R^2)$ satisfies ($\mathrm{Hol}$ $\equiv$ holomorphic functions)
	$$
	f\in\mathrm{Hol}(\Omega)\quad\Leftrightarrow\quad\Div(\mathrm{M}(f))=0\quad\textrm{on }\Omega.
	$$

	\item\label{Notation:traces}\textit{Traces}. We will repeatedly use the notation $\Omega_\lambda(t)$ and $\Gamma_\lambda(t)$, with $\lambda\in\{-1,1\}$, to denote the sets $\Omega_{\pm}(t)$ and $\Gamma_{\pm}(t)$. Thus, given $f$ continuous on $\bar{\Omega}_{\lambda}$ or $\bar{\Omega}_{\mathrm{tur}}$, we denote its traces on $\Gamma_{\lambda}$, $\lambda\in\{-1,1\}$ as
	$$f_{\lambda}^{\pm}(t,s)
	:=\lim_{\varepsilon\downarrow 0}f(t,\map_{\lambda}(t,s)\pm\varepsilon\partial_s\map_{\lambda}(t,s)^\perp),$$
	and the corresponding mean value and jump across $\Gamma_{\lambda}$ as
	$$
	\M{f}{\lambda}:=\tfrac{1}{2}(f_{\lambda}^++f_{\lambda}^-),\quad\quad
	\J{f}{\lambda}:=f_{\lambda}^+-f_{\lambda}^-. 
	$$
	In particular, if $\J{f}{\lambda}=0$ we shall write $f_{\lambda}(t,s)=f(t,\map_{\lambda}(t,s))$. In this case,
	complex path integrals along $\Gamma_\lambda$ can be written as
	$$
	\int_{\Gamma_\lambda} f(x)\dif x=\int_{\T} f_\lambda\partial_s\map_\lambda\dif s=\int f_\lambda\partial_s\map_\lambda,
  $$
  where we abbreviate $\int=\int_{\T}\dif s$.\\
Furthermore, given $b_{\pm}\in C(\Gamma_{\pm})$ we denote
	$$
	\mean{b}
	:=\tfrac{1}{2}(b_++b_-),\quad\quad
	\dev{b}:=\tfrac{1}{2}(b_+-b_-). 
	$$
\item\label{Notation:Taylor}\textit{Taylor series}. Let $I\subset\R$ be an interval. Given $f:I\rightarrow\C$ we denote
$$\triangle_hf(r):=\frac{f(r+h)-f(r)}{h},$$
for $r,r+h\in I$ with $h\neq 0$.
More generally, if $f$ is $n$-times differentiable at  $r$ we denote its Taylor polynomial of order $n$ and the corresponding reminder as
\begin{align*}
T_r^nf(h)&:=\sum_{j=0}^n\frac{\partial_r^jf(r)}{j!}h^j,\\
\triangle_h^{n+1}f(r)
&:=\frac{f(r+h)-T_r^nf(h)}{h^{n+1}}
=\frac{\triangle_h^nf(r)-\tfrac{1}{n!}\partial_s^nf(r)}{h}.
\end{align*}
In particular, if $f\in C^{n,\alpha}(I;\C)$ we have
$$\norma{\triangle_h^{n+1}f}{\infty}\leq\frac{\snorma{\partial_r^n f}{\alpha}}{\binom{n+\alpha}{n}n!}h^{\alpha-1},$$
and if $f\in C^{n+1}(I;\C)$ also
$$\triangle_0^{n+1}f(r):=\lim_{h\rightarrow 0}\triangle_h^{n+1}f(r)=\tfrac{1}{(n+1)!}\partial_r^{n+1}f(r).$$
On the one hand, we will apply this at ``$r=s$'' for ``$h=\xi$'' in Section \ref{sec:BRO}. In particular, for $f\in C^k(\T;\C)$, the following decomposition will be useful
$$
\triangle_\xi f(s)
=\sum_{j=1}^k\frac{\partial_s^jf(s)}{j!}\xi^{j-1}
+\xi^k\triangle_\xi^{k+1}f(s).
$$
On the other hand, we will apply this at ``$r=0$'' for ``$h=t$'' with the abbreviation
$$f^{(n)}\equiv\triangle_0^nf=\tfrac{1}{n!}\partial_t^nf(0),$$
and
$$
f^{n)}\equiv T_0^nf=\sum_{k=0}^nf^{(k)}t^k,\quad\quad
f^{(n+1}\equiv\Delta_t^{n+1}f
=\frac{f-f^{n)}}{t^{n+1}}
=\frac{f^{(n}-f^{(n)}}{t}.
$$
\end{enumerate}

\section{The subsolution}\label{sec:subsolution}

Our weak solutions to (IE) are obtained from a subsolution $(\bar{v},\bar{p},R)$ (cf.~Definition \ref{defi:subsolution} below) via convex integration and indeed, the key point in our approach is to construct the subsolution. This is in alignment with the classical approach to turbulent flows via the Reynolds decomposition, splitting the velocity field into $\velocity=\bar{\velocity}+\velocity'$ (and $p=\bar{p}+p'$) where $\bar{\velocity}$ represents a mean velocity and $\velocity'$ the corresponding fluctuation (cf.~\cite{HPfluid}). Then, formally $(\bar{\velocity},\bar{p},\bar{R})$ solves
\begin{subequations}
\begin{align}
\partial_t\bar{\velocity}+\Div(\bar{\velocity}\otimes\bar{\velocity}+\bar{R})+\nabla\bar{p}&=0,\label{ER}\\
\Div\bar{\velocity}&=0,
\end{align}
\end{subequations}
where
$$
\bar{R}=\overline{\velocity\otimes\velocity}-\bar{\velocity}\otimes\bar{\velocity}
=\overline{\velocity'\otimes\velocity'}
$$
is the \textbf{Reynolds stress tensor}, which satisfies $R\geq 0$ (positive semidefinite). Observe in particular that
$\tfrac12\tr\bar{R}=e-\tfrac{1}{2}|\bar\velocity|^2$, with $e=\tfrac{1}{2}|\velocity|^2$ being the kinetic energy density (cf.~Example~\ref{Ex:flat}). 
Observe that in \eqref{ER} the term $\tr\bar{R}$ may be absorbed in the pressure $\bar{p}$. A subsolution is then defined as follows.

\begin{defi}\label{defi:subsolution} A triple $(\bar{\velocity},\bar{p},R)\in L^\infty(0,T;L^\infty_{\sigma}(\R^2)\times L^\infty(\R^2)\times L^\infty(\R^2;\mathcal{S}))\cap C([0,T];L^\infty_{w^*}(\R^2))$ is a weak solution to the incompressible Euler-Reynolds equations (IER) if
\begin{equation}\label{subsolution:weak}
\int_0^t\int_{\R^2}(\bar{\velocity}\cdot\partial_t\Psi+\sigma:\nabla\Psi)\dif x\dif\tau
=\int_{\R^2}\bar{\velocity}(t)\cdot\Psi(t)\dif x-\int_{\R^2}\velocity_0\cdot\Psi_0\dif x
\end{equation}
holds for every test function $\Psi\in C_c^1(\R^3;\R^2)$  and $0\leq t\leq T$, being
$\sigma\equiv\bar{\velocity}\otimes\bar{\velocity}+ R+\bar{p}I$. \\
Let $\Turzone$ be an open subset of $[0,T]\times\R^2$ and $e\in C^0(\Turzone;\R_+)$. A triple $(\bar{\velocity},\bar{p},R)$ is a strict \textbf{subsolution} to (IE) w.r.t.~$e$ if $(\bar{\velocity},\bar{p},R)$ is a weak solution to (IER) with: 
\begin{enumerate}
\item[(i)] outside $\Turzone$ we have $R=0$,
\item[(ii)] in $\Turzone$ the functions $(\bar{\velocity},R)$ are continuous and
$$
\tfrac{1}{2}|\bar{\velocity}|^2+|\mathring{R}|<e.
$$ 
\end{enumerate}
Observe that (ii) is equivalent to $\bar{R}:=\mathring{R}+(e-\tfrac{1}{2}|\bar{\velocity}|^2)\mathrm{ Id}>0$ in $\Turzone$.
For simplicity of notation we extend $e$ as $\tfrac{1}{2}|\bar{\velocity}|^2$ outside $\Turzone$.
\end{defi}

We recall the following result from \cite{Onadmissibility}, guaranteeing the existence of (infinitely many) weak solutions based on a strict subsolution.

\begin{thm}[Subsolution Criterion \cite{Onadmissibility}]\label{thm:hprinciple} Suppose there exists a subsolution $(\bar{\velocity},\bar{p},R)$ to $\mathrm{(IE)}$ w.r.t.~some $e\in C^0(\Turzone;\R_+)$ with $\Turzone$ given by \eqref{def:turzone}. 
Then, there exist infinitely many weak solutions $(\velocity,p)$ to $\mathrm{(IE)}$ with $\velocity$ satisfying
\begin{align*}
\velocity=\bar{\velocity}\quad &\textrm{outside }\Turzone,\\
\tfrac{1}{2}|\velocity|^2=e\quad &\textrm{in }\Turzone,
\end{align*}
and $p$ given by
\begin{equation}\label{pbarp}
p=\bar{p}+\tfrac{1}{2}(|\bar{\velocity}|^2+\tr R)-e.
\end{equation}
\end{thm}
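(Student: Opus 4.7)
The plan is to prove this via the Tartar framework of convex integration, following \cite{Onadmissibility,Eulerinclusion}. The first step is to reduce the statement to a differential inclusion: writing $\tilde\velocity:=\velocity-\bar\velocity$ and $\tilde u:=\mathring{(\velocity\otimes\velocity)}-\mathring{(\bar\velocity\otimes\bar\velocity)}-\mathring R$, a direct computation shows that $(\velocity,p)$ is a weak solution to (IE) with the pressure \eqref{pbarp} if and only if $(\tilde\velocity,\tilde u)$ solves the \emph{linear} system
\[
\partial_t\tilde\velocity+\Div\tilde u=0,\qquad\Div\tilde\velocity=0,
\]
vanishes outside $\Turzone$, and satisfies the pointwise nonlinear constraint $\tfrac12|\bar\velocity+\tilde\velocity|^2=e$ inside $\Turzone$. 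The nonlinear target set is non-convex, but its convex hull is cut out by the subsolution inequality $\tfrac12|\bar\velocity+\tilde\velocity|^2+|\mathring R+\tilde u|\le e$, which is precisely where the hypothesis on $(\bar\velocity,\bar p,R)$ places the starting point $(0,0)$.

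Next I would set up a Baire space. Let $X_0$ be the set of continuous perturbations $(\tilde\velocity,\tilde u)$ compactly supported in $\Turzone$ for which $(\bar\velocity+\tilde\velocity,\bar p+\tilde p,R+\tilde u)$ remains a strict subsolution, i.e.\ the strict inequality of Definition~\ref{defi:subsolution}(ii) is preserved. The hypothesis of the theorem guarantees $X_0\neq\emptyset$, and since all admissible fields are uniformly $L^\infty$-bounded in terms of $e$, the weak-$*$ closure $X:=\overline{X_0}$ is a nonempty complete metric space. I then introduce the functional
\[
I(\tilde\velocity):=\int_{\Turzone}(2e-|\bar\velocity+\tilde\velocity|^2)\dif x\dif t\ge 0;
\]
it is upper semicontinuous on $X$ by weak-$*$ convergence, and its zero set consists exactly of those elements that yield weak solutions to (IE) with $|\velocity|^2=2e$ a.e.\ in $\Turzone$.

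The crux is a perturbation lemma: for every $(\tilde\velocity,\tilde u)\in X_0$ there exist compactly supported corrections $(\tilde w,\tilde s)$ with $(\tilde\velocity+\tilde w,\tilde u+\tilde s)\in X_0$ and $\norma{\tilde w}{2}^2\ge c(I(\tilde\velocity))^2$ for a universal continuous $c>0$. This is built from localized high-frequency plane-wave trains lying in the ``wave cone'' of the linear system above, with amplitudes controlled pointwise by the subsolution gap $e-\tfrac12|\bar\velocity+\tilde\velocity|^2-|\mathring R+\tilde u|$. From this one deduces by a standard Baire category argument that the set of continuity points of $I$ on $X$ is residual in $X$, and the perturbation lemma forces every such continuity point to satisfy $I=0$. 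Iterating the perturbation lemma produces infinitely many such points, hence infinitely many weak solutions to (IE). The pressure identity \eqref{pbarp} is then read off directly by comparing the Euler equation for $\velocity$ with the Euler--Reynolds equation for $\bar\velocity$ after absorbing $\tfrac12\tr R$ into the pressure.

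The hard part is the perturbation lemma. The oscillations must be genuinely compactly supported inside $\Turzone$ (so that the match with $\bar\velocity$ outside persists), must solve the linear constraints of the first step exactly, and must gain a definite amount of $L^2$ mass compared to the gap between the subsolution and the target energy. This requires a potential formulation lifting the linear constraints to a vector potential, together with a careful partition-of-unity localization in $\Turzone$ that keeps every iterate inside the open convex cone of strict subsolutions. All of this technology is developed in \cite{Onadmissibility,Eulerinclusion}; with those tools in hand, the Baire scheme outlined above is essentially mechanical.
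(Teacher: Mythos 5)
The paper does not prove this theorem --- it is quoted directly from \cite{Onadmissibility} --- and your outline correctly reproduces the convex-integration/Baire-category strategy of that reference: reduction to a linear system plus a pointwise constraint, the convex-hull characterization matching Definition~\ref{defi:subsolution}(ii), the upper-semicontinuous energy functional on the weak-$*$ closure of the perturbation space, and the oscillation lemma built from localized plane waves in the wave cone (your only slip is notational: the $\tilde u$ appearing in the hull inequality must be the independent traceless matrix variable, not the value $\mathring{(\velocity\otimes\velocity)}-\mathring{(\bar\velocity\otimes\bar\velocity)}-\mathring{R}$ it takes on the constraint set itself). Since the entire technical burden --- the perturbation lemma, the potential formulation, the localization keeping every iterate strictly inside the hull, and the $C([0,T];L^\infty_{w*})$ attainment of the initial datum --- is deferred to \cite{Onadmissibility,Eulerinclusion}, your treatment is exactly as complete as the paper's own, which consists of the citation.
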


By this result, the construction of weak solutions as in Theorem \ref{thm:global} is reduced to constructing suitable subsolutions $(\bar{\velocity},\bar{p},R)$ adapted to $\Turzone$. In the following Sections \ref{sec:velocity}-\ref{sec:R} we introduce our \emph{ansatz} for the velocity $\bar{\velocity}$, define the corresponding pressure $\bar{p}$ and derive conditions (see Proposition \ref{p:solvability}) under which a Reynolds stress leading to a strict subsolution exists. 

\subsection{The velocity}\label{sec:velocity}

Following \cite{Piecewise}, our central \emph{ansatz} is that the vorticity of the subsolution $\bar{\omega}(t):=\Curl\,\bar{\velocity}(t)$ is concentrated on the boundary of the turbulence zone, i.e.
\begin{equation}\label{ansatzomega}
\bar{\omega}(t):=\frac{1}{2}\sum_{\lambda=\pm1}\map_{\lambda}(t)^\sharp(\varpi_{\lambda}(t)\dif s),
\end{equation}
with the vortex sheet strengths $\varpi_{\lambda}$ to be determined. As in \eqref{varpi:1}, it is convenient (in fact necessary) to write it as
\begin{equation}\label{varpi}
\varpi_{\lambda}=\varpi_0+\partial_s\tilde{\varpi},
\end{equation}
with $\tilde{\varpi}$ to be determined. We note in passing that one could also consider different $\tilde{\varpi}_{\lambda}$ for $\lambda=\pm 1$, but for our purposes this additional freedom of choice is not needed. 

Once $\bar{\omega}$ is chosen, the velocity $\bar{\velocity}$ is determined by the Biot-Savart law as follows below.

\begin{prop}[Biot-Savart law]\label{prop:BSlaw} Given $\omega\in\mathcal{M}_c(\R^2)$, any distributional solution $v$ to
\begin{equation}\label{BS}
\Div\velocity=0,\quad
\Curl\velocity=\omega,
\end{equation}
satisfies $v^*\in\BSO(\omega)^*+\mathrm{Hol}(\C)$, where $\BSO:\mathcal{M}_c(\R^2)\rightarrow L_{\mathrm{loc}}^1(\R^2)$ is the Biot-Savart operator
$$\BSO(\omega)(x)^*:=(K*\omega)(x)
=\frac{1}{2\pi i}\int_{\R^2}\frac{\dif\omega(y)}{x-y},
\quad a.e.\,x\in\R^2,$$
and $K$ is the Cauchy kernel
$$K(x):=\frac{1}{2\pi i x}.$$
Furthermore, $\BSO(\omega)^*\in\mathrm{Hol}(\R^2\setminus\Gamma)$ where $\Gamma\equiv\sop\omega$ with decay 
\begin{equation}\label{BSO:decay}
\BSO(\omega)(x)^*=\frac{1}{2\pi i x}(\omega(\R^2)+\mathcal{O}(|x|^{-1})),
\quad |x|\gg 1.
\end{equation}
\end{prop}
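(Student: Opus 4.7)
The plan is to recast (BS) as a single inhomogeneous Cauchy--Riemann equation, solve it by convolution with the Cauchy kernel, and then use hypoellipticity of $\bar\partial$ to describe the full solution space. Identifying $\R^2\simeq\C$ and writing $\bar\partial=\tfrac12(\partial_{x_1}+i\partial_{x_2})$, a direct computation on any $v=(v_1,v_2)$ gives the identity
$$2\bar\partial(v^*)=\Div v-i\Curl v,$$
so that the system (BS) is equivalent to the single scalar equation $2\bar\partial(v^*)=-i\omega$ in $\mathscr{D}'(\C)$.

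First I would check that $\BSO(\omega)$ is well defined: since $K(x)=1/(2\pi i x)$ is locally integrable on $\C$ and $\omega$ is compactly supported, the convolution $K*\omega$ defines an element of $L^1_{\mathrm{loc}}(\R^2)$. Then, using the classical distributional identity $\bar\partial(1/x)=\pi\delta_0$, I get $2\bar\partial K=-i\delta_0$, and consequently $2\bar\partial(K*\omega)=-i\omega$, showing that $\BSO(\omega)$ solves (BS). For any other distributional solution $v$, the difference $h:=(v-\BSO(\omega))^*$ satisfies $\bar\partial h=0$ on $\C$; by Weyl's lemma for $\bar\partial$, $h$ is then smooth and in fact entire, which yields the claimed decomposition $v^*\in\BSO(\omega)^*+\mathrm{Hol}(\C)$.

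For the holomorphicity and decay of $\BSO(\omega)^*$ outside $\Gamma=\sop\omega$, I would observe that if $x\notin\Gamma$ then the integrand $1/(x-y)$ is smooth and holomorphic in $x$, locally uniformly in $y\in\sop\omega$; differentiating under the integral sign yields $\BSO(\omega)^*\in\mathrm{Hol}(\R^2\setminus\Gamma)$. For $|x|>2\sup_{y\in\Gamma}|y|$ the geometric expansion $1/(x-y)=x^{-1}\sum_{n\geq 0}(y/x)^n$ converges uniformly in $y\in\Gamma$; integrating term by term gives
$$\BSO(\omega)(x)^*=\frac{1}{2\pi i x}\left(\omega(\R^2)+\frac{1}{x}\int y\,\mathrm{d}\omega(y)+\mathcal{O}(|x|^{-2})\right),$$
which is exactly \eqref{BSO:decay}. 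No single step is a serious obstacle here; the proof rests entirely on the distributional Cauchy--Riemann calculus, hypoellipticity of $\bar\partial$, and the standard multipole expansion of the Cauchy kernel. The only mild subtlety is justifying elliptic regularity for $\bar\partial$ starting from a solution known a priori only as a distribution, which is precisely the content of Weyl's lemma.
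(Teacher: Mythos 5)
Your proof is correct, and the paper itself states Proposition~\ref{prop:BSlaw} as a classical fact without proof; your argument via the identity $2\bar\partial(v^*)=\Div v-i\Curl v$, the fundamental solution $\bar\partial(1/x)=\pi\delta_0$, hypoellipticity of $\bar\partial$, and the multipole expansion is precisely the standard reasoning the paper implicitly relies on (and is consistent with its complex-variable conventions, e.g.\ $f\in\mathrm{Hol}(\Omega)\Leftrightarrow\Div(\mathrm{M}(f))=0$). All sign conventions check out, so nothing further is needed.
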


Thus, for $t>0$ the velocity is given by $\bar{\velocity}(t):=\BSO(\bar{\omega}(t))$, 
\begin{equation}\label{u}
\bar{\velocity}(t,x)^*
=\frac{1}{2}\sum_{\lambda=\pm 1}\frac{1}{2\pi i}\int_{\T}\frac{\varpi_{\lambda}(t,s)}{x-\map_{\lambda}(t,s)}\dif s,\quad x\notin\Gamma(t),
\end{equation}
which is the unique distributional solution to \eqref{BS} for \eqref{ansatzomega} vanishing at infinity \eqref{BSO:decay}. As we shall see in Section \ref{sec:BRO},  $\bar{\velocity}(t)$ is bounded, anti-holomorphic outside $\Gamma(t)$ but with tangential discontinuities across $\Gamma(t)$. Indeed, by the classical Sokhotski-Plemelj theorem (cf.~\eqref{e:vpm} in Section \ref{sec:BRO}) these limits $\bar{\velocity}_{\lambda}^{\pm}(t,s)$ are (recall \S\ref{sec:Notation}\ref{Notation:traces})
\begin{equation}\label{velocity:traces}
\bar{\velocity}_{\lambda}^{\pm}
=\BRO_{\lambda}\mp\frac{1}{4}\frac{\varpi_{\lambda}}{\partial_s\map_{\lambda}^*},
\end{equation}
where $\BRO_{\lambda}\equiv\BRO_{\lambda}(\zcurve,\varpi)$ are the Birkhoff-Rott type operators
\begin{equation}\label{BRO:0}
\BRO_{\lambda}(t,s)^*
=\frac{1}{2}\sum_{\mu=\pm 1}\frac{1}{2\pi i}\PV\!\int_{\T}\frac{\varpi_{\mu}(t,s')}{\map_{\lambda}(t,s)-\map_{\mu}(t,s')}\dif s'.
\end{equation}
Notice that the $\PV$ is not necessary for $\mu\neq\lambda$ when $t>0$.
Therefore,
\begin{equation}\label{/u/}
\M{\bar{\velocity}}{\lambda}=\BRO_{\lambda},
\quad\quad
\J{\bar{\velocity}}{\lambda}=-\frac{1}{2}\frac{\varpi_{\lambda}}{\partial_s\map_{\lambda}^*}.
\end{equation}

\subsubsection{Helmholtz decomposition of $\bar{v}$}\label{sec:Helmholtz} It is well-known that an incompressible and irrotational vector field on a simply connected domain can be expressed as the gradient of an harmonic function. However, since $\Omega_{-}(t)$ and $\Turzone(t)$ are not simply connected, we must add the corresponding circulation (\eqref{u:harcir} below). This expression is indeed necessary to recover the pressure $\bar{p}$ outside $\Gamma$ (see \eqref{Blaw} below). 
\begin{defi}\label{defi:circulation}
Let $\Omega\subset\C$ be open and $f\in C(\Omega;\C)$. Given a closed, positively oriented, simple curve $\gamma\in C^1(\T;\C)$, the \textbf{circulation} of $f$ around $\gamma$ is defined as
$$\circulation_{\gamma}(f)
:=\int_{\gamma}f\cdot\dif x.$$
In particular, the \textbf{index} of $x_0\in\C\setminus\gamma(\T)$ w.r.t.~$\gamma$ is  defined as ($K_{x_0}\equiv K(\cdot - x_0)$)
$$\Ind_{\gamma}(x_0):=
\circulation_{\gamma}(K_{x_0}^*)
=\Re\int_\gamma K_{x_0}\dif x
=\frac{1}{2\pi i}\int_{\gamma}\frac{\dif x}{x-x_0}
=\left\lbrace
\begin{array}{rl}
1, & x_0\textrm{ ``inside'' }\gamma(\T),\\[0.1cm]
0, & x_0\textrm{ ``outside'' }\gamma(\T).
\end{array}\right.$$
\end{defi}
\begin{prop}\label{conscirc}
Let $\omega\in\mathcal{M}_c(\R^2)$ and $\Gamma\equiv\sop\omega$. Then, for every $\gamma$ as in Definition \ref{defi:circulation} with $\gamma(\T)\subset\C\setminus\Gamma$, we have
$$\circulation_{\gamma}(\BSO(\omega))
=\omega(\Gamma_\gamma),$$
where $\Gamma_\gamma\equiv\{x\in\Gamma\,:\,\Ind_\gamma(x)=1\}$.
\end{prop}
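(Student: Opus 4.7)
The plan is straightforward: write the circulation as a double integral using the explicit Biot--Savart formula, swap the order of integration by Fubini, and recognize the inner integral as the winding index. First I would express
\[
\circulation_{\gamma}(\BSO(\omega))=\Re\int_{\gamma}\BSO(\omega)^{*}\,\dif x=\Re\int_{\gamma}\frac{1}{2\pi i}\int_{\R^{2}}\frac{\dif\omega(y)}{x-y}\,\dif x,
\]
which makes sense as a classical line integral because Proposition~\ref{prop:BSlaw} guarantees that $\BSO(\omega)^{*}$ is holomorphic (hence continuous) on $\R^{2}\setminus\Gamma\supset\gamma(\T)$ and $\gamma\in C^{1}$.

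The Fubini step is the only nontrivial check. Since $\Gamma=\sop\omega$ and $\gamma(\T)$ are disjoint compact sets, $d:=\dist{\gamma(\T)}{\Gamma}>0$, and since $\omega\in\mathcal{M}_{c}(\R^{2})$ has finite total variation, the joint integrand is bounded in absolute value by $(2\pi d)^{-1}|\omega|(\R^{2})$. Multiplying by $\mathrm{length}(\gamma)<\infty$ gives a finite upper bound for the iterated absolute-value integral, so I can swap the order of integration.

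After the swap the inner integral is precisely $\Ind_{\gamma}(y)=\tfrac{1}{2\pi i}\int_{\gamma}\tfrac{\dif x}{x-y}$ from Definition~\ref{defi:circulation}, and therefore
\[
\circulation_{\gamma}(\BSO(\omega))=\int_{\R^{2}}\Ind_{\gamma}(y)\,\dif\omega(y).
\]
Because $\gamma$ is a simple positively oriented closed curve and $y\notin\gamma(\T)$ for every $y\in\Gamma$, the index takes only the values $0$ and $1$ on $\sop\omega$, being $1$ exactly on $\Gamma_{\gamma}$; the final integral therefore collapses to $\omega(\Gamma_{\gamma})$. I do not foresee any real obstacle: the argument is essentially a Fubini plus definition chase, and the only topological input --- that a simple closed positively oriented curve has index $0$ or $1$ on its complement --- is invoked only at the very last step.
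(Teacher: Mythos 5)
Your proof is correct and follows essentially the same route as the paper: Fubini to swap the order of integration, then Cauchy's integral formula to identify the inner integral with $\Ind_\gamma$, which is $1$ precisely on $\Gamma_\gamma$. The only (harmless) difference is that you extract the real part of the complex line integral from the outset, whereas the paper computes the full integral $\int_\gamma\BSO(\omega)\,\dif x^*$ and uses the Gauss divergence theorem together with $\Div\BSO(\omega)=0$ to discard the imaginary (flux) part; your justification of Fubini via $\dist{\gamma(\T)}{\Gamma}>0$ is in fact spelled out more explicitly than in the paper.
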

\begin{proof}
On the one hand
$$\int_{\gamma}\BSO(\omega)\dif x^*
=\int_{\gamma}\BSO(\omega)\cdot\dif x+
i\int_{\gamma}\BSO(\omega)\cdot\dif x^\perp
=\circulation_\gamma(\BSO(\omega)),$$
where the last equality follows from
Gauss divergence theorem because $\Div\BSO(\omega)=0$. On the other hand, Fubini's theorem and Cauchy's integral formula yield
$$
\int_{\gamma}\BSO(\omega)\dif x^*
=\left(\frac{1}{2\pi i}\int_{\gamma}\int_{\Gamma}\frac{\dif\omega(y)}{x-y}\dif x\right)^*=\left(\frac{1}{2\pi i}\int_{\Gamma}\int_{\gamma}\frac{\dif x}{x-y}\dif\omega(y)\right)^*
=\int_{\Gamma_\gamma}\dif\omega(y),$$
as we wanted to prove.
\end{proof}

Given $x\in\R^2$ let us denote $L_x$ by the half-line $x+(-\infty,0]$.
Let us fix $x_0\in\Omega_+(0)$ so that $L_{x_0}\cap\Gamma_0$ consists of a single point and consequently for $0\leq t\leq T_0$ small enough $L_{x_0}\cap\Gamma_{\lambda}(t)=\{\map_{\lambda}(t,s_{t,\lambda})\}$ and $\Omega_{r}(t)\setminus L_{x_0}$ is simply connected for each region $r\in\{-,+,\mathrm{tur}\}$. Fix also some $x_{t,r}\in\Omega_{r}(t)\setminus L_{x_0}$.

By Proposition~\ref{conscirc}, the circulation $\circulation_\gamma$ of $\bar{v}=\BSO(\bar{\omega})$ 
around any $\gamma\subset\Omega_{r}(t)$ $\circlearrowleft$-surrounding $x_0$ is constant on $\Omega_{r}(t)$, for each region $r\in\{-,+,\mathrm{tur}\}$. Moreover, since $\int\varpi(t)=\int\varpi_0$ by \eqref{varpi}, it is in fact constant on $\Omega_{r}$ with
\begin{equation}\label{circulation}
\circulation_\gamma(\bar{\velocity})=\circulation_r
:=
\begin{cases}
0& r=+,\\
\tfrac{1}{2}\int\varpi_0 & 
r=\textrm{tur},\\
\int\varpi_{0} & r=-,
\end{cases}
\quad\textrm{for }\gamma\,\circlearrowleft\textrm{-surrounding }x_0\textrm{ on }\Omega_{r}(t).
\end{equation}
Thus, we deduce that there exists a (piecewise) harmonic function $\phi(t)$ so that
\begin{equation}\label{u:harcir}
\bar{\velocity}=\nabla\phi+\circulation K_{x_0}^*,
\end{equation}
where $\circulation=\circulation(t,x)$ is defined to be the step function taking the value $\circulation_r$ whenever $x\in \Omega_r(t)$ for $r\in \{-,+,\textrm{tur}\}$. Indeed, by this choice we obtain, for any $\gamma\in C^1(\T;\Omega_{r}(t))$ surrounding $x_0$
$$
\circulation_\gamma(\bar{\velocity}-\circulation K_{x_0}^*)=0,
$$ 
since $
\circulation_\gamma(K_{x_0}^*)=I_\gamma(x_0)=1.
$
Hence $\phi$ can be recovered via
\begin{align}
\phi(t,x)-\phi_{x_{t,r}}
&:=\int_{\gamma}(\bar{\velocity}-\circulation K_{x_0}^*)(t,y)\cdot\dif y\label{phi}\\
&=\frac{\Re}{2}\sum_{\lambda=\pm 1}\frac{1}{2\pi i}\int_{\T}\varpi_{\lambda}(t,s)\int_{\gamma}\left(\frac{1}{y-\map_{\lambda}(t,s)}
-\frac{1-\Ind_{\map_{\lambda}(t)}(x)}{y-x_0}
\right)\dif y\dif s,\nonumber
\end{align}
for any path $\gamma\in C^1([0,1];\Omega_{r}(t)\setminus L_{x_0})$ with $\gamma(0)=x_{t,r}$ and $\gamma(1)=x$, where $\phi(t,x_{t,r})=\phi_{x_{t,r}}$ may be chosen.
By definition \eqref{circulation}\eqref{phi}, $\phi(t,\cdot)$ is continuous on $\Omega_{r}(t)$ and verifies \eqref{u:harcir}. Hence, $\phi(t,\cdot)$ is harmonic.
On the one hand, 
$$\int_{\gamma}\frac{\dif y}{y-\map_{\lambda}(t,s)}
=\mathrm{L}_{\map_\lambda(t,s)}(y)\Big|_{y=x_{t,r}}^{y=x}$$
where $\mathrm{L}_{\map_\lambda(t,s)}$ is the branch of the logarithm given by the ray $l_{\map_\lambda(t,s)}:[s,\infty)\rightarrow\C$
$$
l_{\map_\lambda(t,s)}(s')=\left\lbrace
\begin{array}{rl}
\map_{\lambda}(t,s'), & s'\in[s,s_{t,\lambda}),\\[0.1cm]
\map_\lambda(t,s_{t,\lambda})+(s_{t,\lambda}-s'), & s'\in[s_{t,\lambda},\infty).
\end{array}\right.$$
On the other hand, 
$$
\int_{\gamma}\frac{\dif y}{y-x_0}
=\Log(y-x_0)\Big|_{y=x_{t,r}}^{y=x}
$$
where $\Log$ is the principal branch of the logarithm. Hence, \eqref{phi} reads as
$$
\phi(t,x)
=\frac{\Re}{2}\sum_{\lambda=\pm 1}\frac{1}{2\pi i}\int_{\T}\varpi_{\lambda}(t,s)(\mathrm{L}_{\map_{\lambda}(t,s)}(x)-(1-\Ind_{\map_{\lambda}(t)}(x))\Log(x-x_0))\dif s+O(t),
$$
where $O(t)$ is an arbitrary function of $t$.
In particular, by \eqref{varpi} we deduce
$$
\partial_t\phi(t,x)
=\frac{\Re}{2}\sum_{\lambda=\pm 1}\frac{1}{2\pi i}\int_{\T}\left(\partial_{st}\tilde{\varpi}(t,s)\mathrm{L}_{\map_{\lambda}(t,s)}(x)-\varpi_{\lambda}(t,s)\frac{\partial_t\map_{\lambda}(t,s)}{x-\map_{\lambda}(t,s)}\right)\dif s+\partial_tO(t).
$$
Hence, integrating by parts, we can choose $O(t)$
in such a way that
\begin{equation}\label{phit}
\partial_t\phi(t,x)
=\frac{\Re}{2}\sum_{\lambda=\pm 1}\frac{1}{2\pi i}\int_{\T}\frac{(\partial_{t}\tilde{\varpi}\partial_s\map_{\lambda}-\varpi\partial_t\map_{\lambda})(t,s)}{x-\map_{\lambda}(t,s)}\dif s.
\end{equation}
Then, using the Sokhotski-Plemelj theorem, we deduce (recall \S\ref{sec:Notation}\ref{Notation:complex})
\begin{equation}\label{eq:potjump}
\J{\partial_t\phi}{\lambda}
=-\frac{\Re}{2}\left(\frac{\partial_{t}\tilde{\varpi}\partial_s\map_{\lambda}-\varpi\partial_t\map_{\lambda}}{\partial_s\map_{\lambda}}\right)
=\tfrac{1}{2}(
\varpi\Re_{\partial_s\map_\lambda}(\partial_{t}\map_\lambda)-\partial_{t}\tilde{\varpi}).
\end{equation}

\subsection{The pressure}\label{sec:p}

We define $\bar{p}$ outside $\Gamma$ by means of the Bernoulli's law
\begin{equation}\label{Blaw}
\bar{p}:=-\partial_t\phi-\tfrac{1}{2}|\bar{\velocity}|^2
\quad\textrm{outside }\Gamma,
\end{equation}
with $\phi$ given in \eqref{phi}. 
Since $\bar{\velocity}$ is $\Div$-$\Curl$ free outside $\Gamma$, a simple computation yields
$$\Div(\bar{\velocity}\otimes\bar{\velocity})
=\tfrac{1}{2}\nabla(|\bar{\velocity}|^2)
\quad\textrm{outside }\Gamma.$$
Thus, by applying $\nabla$ on \eqref{Blaw} we deduce
\begin{equation}\label{EqBernoulli:0}
\partial_t\bar{\velocity}+\Div(\bar{\velocity}\otimes\bar{\velocity})+\nabla \bar{p}=0
\quad\textrm{outside }\Gamma.
\end{equation}

\begin{prop}\label{p:[p]} The jump in $\bar{p}$ across $\Gamma_{\lambda}$ is
\begin{equation}\label{[p]}
[\bar{p}]_{\lambda}=\tfrac{1}{2}(\partial_t\tilde{\varpi}-\varpi\Re_{\partial_{s}\map_{\lambda}}(\partial_t\map_{\lambda}-\BRO_{\lambda})).
\end{equation}	
\end{prop}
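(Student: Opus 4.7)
The proof is essentially algebraic, once the ingredients developed in Sections~\ref{sec:velocity}--\ref{sec:p} are in place. My plan is to start from the Bernoulli law \eqref{Blaw}, which is valid on each connected component of $\R^2\setminus\Gamma$, and simply take the jump across $\Gamma_\lambda$:
\begin{equation*}
\J{\bar{p}}{\lambda}=-\J{\partial_t\phi}{\lambda}-\tfrac{1}{2}\J{|\bar{\velocity}|^2}{\lambda}.
\end{equation*}
The first jump on the right is already computed in \eqref{eq:potjump}, so only the quadratic term requires work.

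For $\J{|\bar{\velocity}|^2}{\lambda}$, I would use the elementary identity $a^2-b^2=(a+b)(a-b)$ applied componentwise, i.e.
\begin{equation*}
\J{|\bar{\velocity}|^2}{\lambda}=|\bar{\velocity}_\lambda^+|^2-|\bar{\velocity}_\lambda^-|^2=2\,\M{\bar{\velocity}}{\lambda}\cdot\J{\bar{\velocity}}{\lambda},
\end{equation*}
and then plug in the mean and jump values from \eqref{/u/}, namely $\M{\bar{\velocity}}{\lambda}=\BRO_\lambda$ and $\J{\bar{\velocity}}{\lambda}=-\tfrac12\varpi/\partial_s\map_\lambda^*$. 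This gives $\J{|\bar{\velocity}|^2}{\lambda}=-\varpi\,\BRO_\lambda\cdot(\partial_s\map_\lambda^*)^{-1}$, which under the identification $\R^2\simeq\C$ of \S\ref{sec:Notation}\ref{Notation:complex} and the identity $1/w^*=w/|w|^2$ becomes exactly
\begin{equation*}
\J{|\bar{\velocity}|^2}{\lambda}=-\varpi\,\Re_{\partial_s\map_\lambda}(\BRO_\lambda),
\end{equation*}
by the very definition of the projection $\Re_w$.

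Combining this with \eqref{eq:potjump} yields
\begin{equation*}
\J{\bar{p}}{\lambda}=-\tfrac12\bigl(\varpi\Re_{\partial_s\map_\lambda}(\partial_t\map_\lambda)-\partial_t\tilde{\varpi}\bigr)+\tfrac12\varpi\Re_{\partial_s\map_\lambda}(\BRO_\lambda),
\end{equation*}
and linearity of $\Re_{w}$ in its argument immediately collects the two $\Re_{\partial_s\map_\lambda}$ terms into $\Re_{\partial_s\map_\lambda}(\partial_t\map_\lambda-\BRO_\lambda)$, giving the stated formula \eqref{[p]}.

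There is no real obstacle here: the Bernoulli law \eqref{Blaw} reduces the matter to combining the two jump computations, and the only delicate point is to keep straight the identifications between complex and real-vector notation (in particular, recognizing that the real dot product of $\BRO_\lambda$ with the ``complex reciprocal'' $1/\partial_s\map_\lambda^*$ is precisely $\Re_{\partial_s\map_\lambda}(\BRO_\lambda)$). The nontrivial analytic content, namely the Sokhotski--Plemelj jump formulas, has already been absorbed in \eqref{velocity:traces}, \eqref{/u/} and \eqref{eq:potjump}; the present proposition is a clean bookkeeping consequence of those.
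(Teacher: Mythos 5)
Your proof is correct and coincides with the paper's own argument: take the jump in Bernoulli's law \eqref{Blaw}, use \eqref{eq:potjump} for $\J{\partial_t\phi}{\lambda}$, and compute $\tfrac12\J{|\bar{\velocity}|^2}{\lambda}=\M{\bar{\velocity}}{\lambda}\cdot\J{\bar{\velocity}}{\lambda}=-\tfrac12\varpi\Re_{\partial_s\map_\lambda}(\BRO_\lambda)$ via \eqref{/u/}. The bookkeeping with the complex-vector identifications is exactly right.
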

\begin{proof}
By applying \eqref{eq:potjump} coupled with \eqref{/u/} and
\begin{equation}\label{jump|u|}
\tfrac{1}{2}\J{|\bar{\velocity}|^2}{\lambda}
=\M{\bar{\velocity}}{\lambda}\cdot\J{\bar{\velocity}}{\lambda}
=-\tfrac{1}{2}\varpi\Re_{\partial_{s}\map_{\lambda}}(\BRO_{\lambda}),
\end{equation}
Bernoulli's law \eqref{Blaw} implies \eqref{[p]}.
\end{proof}

\subsection{The Reynolds stress}\label{sec:R}

\begin{prop}\label{prop:consmom} Let $(\bar{\velocity},\bar{p})$ given by \eqref{u} and \eqref{Blaw} respectively. Then, $(\bar{\velocity},\bar{p},R)$ is a weak solution to $\mathrm{(IER)}$ if and only if, at each time slice $t>0$, $R=R(t,x)$ solves
\begin{subequations}
\label{R}
\begin{align}
\Div R&=0 \hspace{1.43cm} \textrm{in }\Omega_{\mathrm{tur}}, \label{R:1}\\
\pm(R\partial_s\map^\perp)_{\pm}&=i\B_{\pm} \hspace{1.0cm} \textrm{on }\Gamma_{\pm},\label{R:2}
\end{align}
\end{subequations}
where, for $\lambda=\pm 1$, $\B_{\lambda}$ are the boundary conditions
\begin{equation}\label{R:4}
\B_{\lambda}
=\tfrac{1}{2}(\partial_t\tilde{\varpi}\partial_s\map_\lambda
-\varpi(\partial_t\map_\lambda-\BRO_\lambda)).
\end{equation}
\end{prop}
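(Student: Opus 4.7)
The plan is to unfold the weak formulation \eqref{subsolution:weak} region-by-region, using the partition $\R^2\setminus\Gamma(\tau)=\Omega_+(\tau)\cup\Omega_{\mathrm{tur}}(\tau)\cup\Omega_-(\tau)$. On each of the three open regions $(\bar{\velocity},\bar{p})$ is smooth and satisfies the classical Euler equation by \eqref{EqBernoulli:0}, so I would apply the spacetime divergence theorem on each $\Omega_r$, $r\in\{-,+,\mathrm{tur}\}$. The time-slice terms at $\tau=0$ and $\tau=t$ assemble precisely into the RHS of \eqref{subsolution:weak}, while the interior bulk term $-\int_{\Omega_r}(\partial_\tau\bar{\velocity}+\Div\sigma)\cdot\Psi$ collapses to $-\int_{\Omega_{\mathrm{tur}}}\Div R\cdot\Psi$ (trivially zero in $\Omega_\pm$ where $R=0$). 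Varying $\Psi$ among test functions compactly supported in $\Omega_{\mathrm{tur}}$ then forces $\Div R=0$, which is \eqref{R:1}.

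What remains is the lateral boundary contribution, parametrised over each $\Gamma_\lambda$ by $(\tau,s)\mapsto(\tau,\map_\lambda(\tau,s))$. A short computation of the outward spacetime normal on either side of $\Gamma_\lambda$ (the tangents $(1,\partial_\tau\map_\lambda)$ and $(0,\partial_s\map_\lambda)$ give an unnormalised outward normal with spatial part $\pm\partial_s\map_\lambda^\perp$ and time part $\mp\partial_\tau\map_\lambda\cdot\partial_s\map_\lambda^\perp$) followed by summation of the two adjacent contributions collapses the remaining identity to the single jump integral
\begin{equation*}
\int_0^t\!\int_{\T}\bigl(\J{\sigma}{\lambda}\partial_s\map_\lambda^\perp-\J{\bar{\velocity}}{\lambda}(\partial_\tau\map_\lambda\cdot\partial_s\map_\lambda^\perp)\bigr)\cdot\Psi(\tau,\map_\lambda(\tau,s))\,ds\,d\tau,
\end{equation*}
which by arbitrariness of $\Psi$ must vanish pointwise on $\Gamma_\lambda$, for $\lambda=\pm1$.

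The final step is to recognise this pointwise jump condition as \eqref{R:2}. I would decompose $\J{\sigma}{\lambda}\partial_s\map_\lambda^\perp=\J{\bar{\velocity}\otimes\bar{\velocity}}{\lambda}\partial_s\map_\lambda^\perp+\J{R}{\lambda}\partial_s\map_\lambda^\perp+\J{\bar{p}}{\lambda}\partial_s\map_\lambda^\perp$. By \eqref{/u/}, $\J{\bar{\velocity}}{\lambda}=-\tfrac{1}{2}\varpi/\partial_s\map_\lambda^*$ is a real multiple of $\partial_s\map_\lambda$ and hence tangent to $\Gamma_\lambda$, so the symmetric identity $\J{\bar{\velocity}\otimes\bar{\velocity}}{\lambda}=\M{\bar{\velocity}}{\lambda}\otimes\J{\bar{\velocity}}{\lambda}+\J{\bar{\velocity}}{\lambda}\otimes\M{\bar{\velocity}}{\lambda}$ together with $\M{\bar{\velocity}}{\lambda}=\BRO_\lambda$ reduces the $\bar{\velocity}\otimes\bar{\velocity}$ contribution to $\J{\bar{\velocity}}{\lambda}(\BRO_\lambda\cdot\partial_s\map_\lambda^\perp)$. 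Plugging this in together with Proposition~\ref{p:[p]} for $\J{\bar{p}}{\lambda}$, writing $\partial_s\map_\lambda^\perp=i\partial_s\map_\lambda$ and decomposing $w=(\Re_{\partial_s\map_\lambda}w+i\Im_{\partial_s\map_\lambda}w)\partial_s\map_\lambda$ with $w=\partial_\tau\map_\lambda-\BRO_\lambda$, the RHS collapses after cancellations to $-i\B_\lambda$. Since $R=0$ off $\Omega_{\mathrm{tur}}$, one has $\J{R}{\lambda}=-\lambda(R)_\lambda$ with the trace taken from the turbulence side, so the condition becomes $\lambda(R\partial_s\map_\lambda^\perp)_\lambda=i\B_\lambda$, which is exactly \eqref{R:2}. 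Reading the derived identity in the reverse direction gives the converse, establishing the ``if and only if''.

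The main obstacle is precisely this last algebraic collapse: the conspiracy between \eqref{/u/}, Proposition~\ref{p:[p]} and the real-complex dictionary of \S\ref{sec:Notation}\ref{Notation:complex} must produce exactly $-i\B_\lambda$, and careful sign bookkeeping (the direction of the outward normal on each side of $\Gamma_\lambda$, the convention $\partial_s\map_\lambda^\perp=i\partial_s\map_\lambda$, and which side the trace $(R)_\lambda$ is taken from) is essential. The rest is standard divergence-theorem bookkeeping on a moving interface.
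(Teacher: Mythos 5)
Your proposal is correct and follows essentially the same route as the paper: split the weak formulation over $\Omega_+$, $\Omega_{\mathrm{tur}}$, $\Omega_-$, use \eqref{EqBernoulli:0} to kill the bulk terms outside $\Turzone$, read off $\Div R=0$ from interior test functions, and reduce the lateral boundary term to the jump condition \eqref{consmomGamma2}, which is then identified with \eqref{R:2} via the tangency of $\J{\bar{\velocity}}{\lambda}$, Proposition~\ref{p:[p]}, and the sign convention $\J{R}{\pm}=\mp R_{\pm}$. The only cosmetic difference is that you handle $\J{\bar{\velocity}\otimes\bar{\velocity}}{\lambda}$ via the symmetric product-jump identity rather than the paper's direct use of $(a\otimes b)c=a(b\cdot c)$, which is equivalent.
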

\begin{proof} 
Let us parametrize $\Gamma_{\lambda}$ by the map
$$\mathrm{X}_{\lambda}(t,s):=(t,\map_{\lambda}(t,s)).$$
Then, since
$$\partial_t\mathrm{X}_{\lambda}\times\partial_s\mathrm{X}_{\lambda}
=(1,\partial_t\map_{\lambda})\times(0,\partial_s\map_{\lambda})
=(-\partial_t\map\cdot\partial_s\map^\perp,\partial_s\map^\perp)_{\lambda},$$
the outward (w.r.t.~$\Turzone$) unit normal vector to $\Gamma_{\lambda}$ is
$$\normal{\lambda}{}=\sgn\lambda\frac{(-\partial_t\map\cdot\partial_s\map^\perp,\partial_s\map^\perp)_{\lambda}}{|(-\partial_t\map\cdot\partial_s\map^\perp,\partial_s\map^\perp)_{\lambda}|}.$$
Thus, by splitting $[0,T]\times\R^2$ into each $\Omega_{r}$ for $r\in\{-,+,\mathrm{tur}\}$ and
integrating by parts, we deduce that, for every test function $\Psi\in C_c^1(\R^3;\R^2)$, we have
\begin{align*}
\int_0^t\int_{\R^2}(\bar{\velocity}\cdot\partial_t\Psi+\sigma:\nabla\Psi)\dif x\dif\tau
-\int_{\R^2}\bar{\velocity}(t)\cdot\Psi(t)\dif x+\int_{\R^2}\velocity_0\cdot\Psi_0\dif x\\
=\sum_{\lambda=\pm 1}\int_0^t\int_{\T}((\partial_t\map\cdot\partial_s\map^\perp)[\bar{\velocity}]-[\sigma]\partial_s\map^\perp)_{\lambda}\cdot\Psi_{\lambda}\dif s\dif\tau
-\int_0^t\int_{\Turzone(\tau)}(\Div R)\cdot\Psi\dif x\dif\tau,
\end{align*}
where we have applied $\bar{\velocity}|_{t=0}=\velocity_0$ and \eqref{EqBernoulli:0}.
Therefore, \eqref{subsolution:weak} is equivalent to \eqref{R:1} and
\begin{equation}\label{consmomGamma2}
(\partial_t\map\cdot\partial_s\map^\perp)_{\lambda}[\bar{\velocity}]_{\lambda}=[\sigma]_{\lambda}\partial_s\map_{\lambda}^\perp
\quad\quad\textrm{in }\Gamma_{\lambda}.
\end{equation}
On the one hand, by \eqref{velocity:traces} (recall \S\ref{sec:Notation}\ref{Notation:complex}),
$$(\partial_t\map\cdot\partial_s\map^\perp)_{\lambda}[\bar{\velocity}]_{\lambda}
=-\varpi\Im_{\partial_s\map_{\lambda}}(\partial_t\map_{\lambda})\partial_s\map_{\lambda}.$$
On the other hand, let us split
$$[\sigma]_{\lambda}=[\bar{\velocity}\otimes\bar{\velocity}]_{\lambda}+[\bar{p}]_{\lambda}+
[R]_{\lambda}.$$
Then, by applying the identity $(a\otimes b)c=a(b\cdot c)$ we get
$$[\bar{\velocity}\otimes\bar{\velocity}]_{\lambda}\partial_s\map_{\lambda}^\perp
=\bar{\velocity}_{\lambda}^+(\bar{\velocity}^+\cdot\partial_s\map^\perp)_{\lambda}
-\bar{\velocity}_{\lambda}^-(\bar{\velocity}^-\cdot\partial_s\map^\perp)_{\lambda}
=[\bar{\velocity}]_{\lambda}(\BRO\cdot\partial_s\map^\perp)_{\lambda}
=-\varpi\Im_{\partial_s\map_{\lambda}}(\BRO_{\lambda})\partial_s\map_{\lambda}.$$
Therefore, since $[R]_{\pm}=\mp R_{\pm}$, \eqref{consmomGamma2} reads as
$$\pm R_{\pm}\partial_s\map_{\pm}^\perp
=i\underbrace{([\bar{p}]_\pm-\tfrac{1}{2}i\varpi\Im_{\partial_s\map_\pm}(\partial_t\map_\pm-\BRO_\pm))\partial_s\map_{\pm}}_{b_\pm}\quad\quad\textrm{in }\Gamma_{\pm},$$
where we applied Proposition \ref{[p]}.
\end{proof}

\begin{Rem}\label{Rem:BReq}
Observe that in the case of a single sheet ($\Gamma_+=\Gamma_-$) the above analysis reduces to the derivation of the Birkhoff-Rott system \eqref{BReq} from the weak formulation of the incompressible Euler equations: 
$\B:=\partial_t\tilde{\varpi}\partial_s\zcurve-\varpi(\partial_t\zcurve-\BRO)=0$.
As a result, Proposition \ref{p:[p]} implies continuity of the pressure across the sheet: $[p]=\Re_{\partial_{s}\zcurve}\B=0$, thus generalizing the observation made in \cite{CCG12} that the continuity of the pressure can be deduced from the weak formulation and need not appear as an assumption as for instance in \cite{MB02}.\\
In the case of two sheets ($\Gamma_+\neq\Gamma_-$) one may set $R=0$ by imposing $\B_{+}=\B_{-}=0$, but this seems as problematic as the single sheet case. In other words, the Reynolds stress $R$ allows to relax the ill-posed equations $\B_{+}=\B_{-}=0$ (cf.~Section \ref{sec:proof}).
\end{Rem}

\subsection{Solvability of \eqref{R}}

Next we discuss necessariy and sufficient conditions for solvability of the boundary value problem \eqref{R}. Recall the isomorphism $\mathrm{M}$ and our notation $\langle b\rangle=\frac12(b_++b_-)$ and $\{b\}=\frac12(b_+-b_-)$ introduced in Section \ref{sec:Notation}\ref{Notation:complex} and \ref{Notation:traces} respectively. 

\begin{prop}\label{p:solvability}
The boundary value problem \eqref{R} admits a solution $R=R(t,x)$ uniformly bounded in $t>0$ if and only if 
\begin{enumerate}[(a)]
\item\label{Bq}$\mean{\B}=t\partial_s(q\partial_s\zcurve)$ 
\end{enumerate} 
for some $q=q_1+iq_2$ satisfying
\begin{enumerate}[(a)]
\addtocounter{enumi}{1}
\item\label{p:solvability:average} $\int(q_1|\partial_s\zcurve|^2-\dev{\B}\cdot\vectW)=0$,
\item\label{p:solvability:pointwise} $q_1^{(0)}=\dev{\B}^{(0)}\cdot\vectW$,
\end{enumerate} 
\end{prop}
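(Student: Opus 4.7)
My plan is to view \eqref{R} as an overdetermined problem on the thin strip $\Omega_{\mathrm{tur}}$ of width $O(t)$ and to extract the conditions (a)--(c) by tracking the leading-order structure in $t$. For the converse I then construct $R$ explicitly from $q$ by interpolation across the strip followed by a divergence-cleaning correction.

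The first step is to put the boundary condition into a tractable normal form. Using the isomorphism $\mathrm{M}\colon\C\to\mathcal{S}_0$ of \S\ref{sec:Notation}\ref{Notation:complex}, decompose $R=r_0\mathrm{Id}+\mathrm{M}(r)$ with $r_0\in\R$ and $r\in\C$. The identities $\mathrm{M}(r)\partial_s\map^\perp=ir^*\partial_s\map^*$ and $\partial_s\map^\perp=i\partial_s\map$ recast \eqref{R:2} as
\[
\B_\pm=\pm\bigl(\rho_\pm\,\partial_s\map_\pm+\sigma_\pm\,\partial_s\map_\pm^*\bigr),
\]
with $\rho_\pm:=(r_0)_\pm\in\R$ and $\sigma_\pm:=r^*_\pm\in\C$. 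Expanding $\partial_s\map_\pm=\partial_s\zcurve\pm t\partial_s\vectW$ splits this into
\begin{align*}
\mean{\B}&=\dev{\rho}\,\partial_s\zcurve+\dev{\sigma}\,\partial_s\zcurve^*+t\bigl(\mean{\rho}\,\partial_s\vectW+\mean{\sigma}\,\partial_s\vectW^*\bigr),\\
\dev{\B}&=\mean{\rho}\,\partial_s\zcurve+\mean{\sigma}\,\partial_s\zcurve^*+t\bigl(\dev{\rho}\,\partial_s\vectW+\dev{\sigma}\,\partial_s\vectW^*\bigr),
\end{align*}
in which the role of the parameter $t$ becomes transparent.

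To extract the necessary conditions I would test $\Div R=0$ against suitably chosen fields. Constant test vectors combined with Gauss's theorem on $\Omega_{\mathrm{tur}}$ and \eqref{R:2} yield $\int\mean{\B}=0$, and the uniform-in-$t$ boundedness of $R$ forces the $O(1)$ piece of the first equation above to cancel, i.e.~$\dev{\rho}\,\partial_s\zcurve+\dev{\sigma}\,\partial_s\zcurve^*=O(t)$. Hence $\mean{\B}=O(t)$ and the primitive of $\mean{\B}/t$ along $\T$ (closed by mean-zero) can be written in the tangent frame as $q\,\partial_s\zcurve$ for a unique $q=q_1+iq_2$, which is (a). Testing $\Div R=0$ against a linear field aligned with $\vectW$ and converting to a boundary integral on $\Gamma_\pm$ gives the global balance $\int(\dev{\B}\cdot\vectW-q_1|\partial_s\zcurve|^2)\dif s=0$, which is (b). Finally, matching the $O(1)$ parts at $t=0$ of the $\dev{\B}$-equation against the $O(t)$ part of the $\mean{\B}$-equation (equivalently, against (a)), and projecting onto the transverse direction $\vectW$ (using $\vectW\cdot\partial_s\zcurve_0=0$) eliminates $\mean{\rho}^{(0)}$ and $\mean{\sigma}^{(0)}$ and leaves the pointwise identity $q_1^{(0)}=\dev{\B}^{(0)}\cdot\vectW$, which is (c).

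For sufficiency, given (a)--(c), I would read off $\rho_\pm,\sigma_\pm$ on $\Gamma_\pm$ through the normal form, extend $r_0$ and $r$ into $\Omega_{\mathrm{tur}}$ by affine interpolation in the strip parameter $\lambda\in[-1,1]$ along $x=\map_\lambda(t,s)$, and then add a small correction built from an Airy stress function to enforce $\Div R=0$ exactly while preserving the boundary data. \emph{The main obstacle} is the thin-strip degeneracy: as $t\downarrow 0$ the domain collapses to $\Gamma_0$, so the uniform-in-$t$ bound on $R$ requires the overdetermined boundary data to be compatible at every order in $t$. The pointwise condition (c) is the subtlest part, since it is strictly stronger than any integral condition and captures precisely the matching at $t=0$ of the traces from $\Gamma_+$ and $\Gamma_-$ as they coalesce; showing that under (a)--(c) the divergence-cleaning correction remains bounded uniformly in $t$ is the technical heart of the argument.
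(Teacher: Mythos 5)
Your normal form $R=r_0\,\mathrm{Id}+\mathrm{M}(r)$ and the resulting splitting of $\mean{\B}$, $\dev{\B}$ are correct, and you have correctly located the three ingredients (two integral balances plus a pointwise matching at $t=0$). However, there are two genuine gaps. First, the necessity argument hinges on the assertion that ``uniform boundedness of $R$ forces $\dev{\rho}\partial_s\zcurve+\dev{\sigma}\partial_s\zcurve^*=O(t)$,'' i.e.\ that the jump of the normal stress across the $O(t)$-thin strip is $O(t)$. This does not follow from boundedness of $R$ alone: a bounded tensor can have $O(1)$ trace differences across a strip of width $t$, and controlling the difference requires a quantitative use of $\Div R=0$. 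The paper handles this by representing any divergence-free symmetric $R$ on the annulus through a second-order potential, $R=\nabla^{\perp}\otimes\nabla^{\perp}g+\mathrm{M}\F$ (Lemma~\ref{R:solution}), so that boundedness of $R=\nabla^2 g$ translates into the orders $\partial_s^k\partial_\lambda^nG=\mathcal{O}(t^n)$ for $G=g\circ\map$, and the trace conditions \eqref{tcond} then follow by Taylor expansion of $G$ in the strip variable $\lambda$ --- two integrations below the level at which you are working. The same issue affects your derivation of (c): it cannot be obtained by ``matching'' the boundary expansions of $\mean{\B}$ and $\dev{\B}$, because the unconstrained third component of the boundary trace of $R$ enters those relations; (c) is precisely the condition $\dev{G}-\mean{\partial_\lambda G}=\mathcal{O}(t^2)$, which encodes consistency of the interior field, not of the boundary data alone.

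Second, the sufficiency construction is not carried out and, as sketched, would not go through. Affine interpolation of the traces of $R$ produces an interior field whose divergence is generically $O(1/t)$, so the ``divergence-cleaning correction'' must absorb a large error while staying uniformly bounded and preserving the boundary data --- this is the whole difficulty, not a routine afterthought. Moreover, an Airy stress function exists on the annulus $\Omega_{\mathrm{tur}}$ only if the periods $\int\dev{\B}$ and $\int\dev{\B\cdot\map}$ vanish, which they do not in general; the paper must add the explicit multivalued corrections $\F=\alpha^*K_{x_0}+\beta K_{x_0}'$ to account for this, and your proposal never addresses the topology. The paper's sufficiency argument instead interpolates the potential $G$ itself by a cubic Lagrange polynomial in $\lambda$, for which conditions (a)--(c) (via \eqref{tcond}) are exactly what makes $\nabla^2g$ uniformly bounded. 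I would recommend reorganizing your proof around the potential representation from the start.
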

The question of solvability involves two issues. First, since $R$ is divergence-free, Gauss divergence theorem leads to an integrability condition for the boundary values - this is represented by \ref{Bq}\ref{p:solvability:average} above. Secondly, the thickness of the domain $\Omega_{\mathrm{tur}}(t)$ is order $\sim t$, so that, in order to make sure that $R$ is uniformly bounded in $t$, there is a necessary matching condition at $t=0$ - represented by \ref{p:solvability:pointwise} above. 

We split the proof of Proposition \ref{p:solvability} in two parts.  
First we look at necessary and sufficient conditions on $\B_{\lambda}$ for solvability of \eqref{R} at any fixed time slice. In the following we will use the notation $G:=g\circ \map=g^\sharp$ to denote the change of variables adapted to $\Turzone$ as
\begin{equation}\label{eq:defG}
G(t,s,\lambda)=g^\sharp(t,s,\lambda)=g(t,\map_\lambda(t,s)).
\end{equation}
In the following lemma we fix $t>0$ and, for ease of notation, supress dependence on $t$.

\begin{lemma}\label{R:solution} 
Given $\B_{\lambda}\in C(\T;\R^2)$, there exists $R\in C^1(\Omega_{\mathrm{tur}};\mathcal{S})\cap C(\bar{\Omega}_{\mathrm{tur}};\mathcal{S})$ solving \eqref{R} if and only if the following compatibility conditions hold
\begin{subequations}
\label{bcond}
\begin{align}
\int\mean{\B}&=0,\label{bcond:1}\\
\int\mean{\B\cdot\map}&=0.\label{bcond:2}
\end{align}
\end{subequations}
In this case, 
$$R=\left(\begin{array}{rr}
\partial_{22}g & -\partial_{12}g \\
-\partial_{12}g & \partial_{11}g
\end{array}\right)
+\mathrm{M}\F,$$
where
$$\F:=\alpha^*K_{x_0}+\beta K_{x_0}'
\quad\textrm{with}\quad
\begin{array}{l}
\displaystyle\alpha:=\int\dev{\B},\\[0.1cm]
\displaystyle\beta:=\alpha\cdot x_0-\int\dev{\B\cdot\map},
\end{array}$$
for some $g\in C^3(\Omega_{\mathrm{tur}})\cap C^2(\bar{\Omega}_{\mathrm{tur}})$ satisfying
\begin{equation}
\partial_s(\nabla g)_{\pm}
=\pm\B_{\pm}-(\F\partial_s\map)_{\pm}^*.
\end{equation}
\end{lemma}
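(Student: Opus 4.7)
My plan is to treat necessity and sufficiency separately, exploiting the classical Airy stress function representation together with a divergence-free correction term $\mathrm{M}\F$ that absorbs the homological obstructions coming from the annular topology of $\Omega_{\mathrm{tur}}$.

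For \emph{necessity}, I would test the equation $\Div R = 0$ against two classes of affine vector fields. With $\phi\equiv\mathrm{const}$, integration by parts gives $0=\int_{\partial\Omega_{\mathrm{tur}}}R\nu\cdot\phi\,dS$; translating this via the boundary conditions into $\Gamma_\pm$-traces and using that the outward normal to $\Omega_{\mathrm{tur}}$ on $\Gamma_\pm$ points along $\pm\partial_s\map_\pm^\perp$, we obtain
\[
0 = \int\bigl((R\partial_s\map^\perp)_+ - (R\partial_s\map^\perp)_-\bigr)\,ds = i\!\int(\B_+ + \B_-)\,ds = 2i\!\int\mean{\B}\,ds,
\]
yielding \eqref{bcond:1}. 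Next, taking $\phi(x)=x^\perp$, the antisymmetry of $\nabla\phi$ combined with the symmetry of $R$ kills the bulk term $\int R:\nabla\phi\,dx$, and the boundary term reduces, by an analogous calculation, to \eqref{bcond:2}.

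For \emph{sufficiency}, observe first that $\mathrm{M}\F$ is symmetric, trace-free and divergence-free on $\Omega_{\mathrm{tur}}$ since $\F$ is holomorphic there ($x_0\notin\bar{\Omega}_{\mathrm{tur}}$, recall Section~\ref{sec:Notation}). Using $\mathrm{M}(z)w=-(zw)^*$ together with $\partial_s\map^\perp=i\partial_s\map$, a direct computation shows that, writing $R$ as the sum of the Airy tensor associated to $g$ (the first term in the lemma) and $\mathrm{M}\F$, the prescribed boundary condition $\pm(R\partial_s\map^\perp)_\pm = i\B_\pm$ is equivalent to
\[
\partial_s(\nabla g)_\pm = \pm\B_\pm - (\F\partial_s\map)^*_\pm.
\]
For this identity to admit a solution $\nabla g|_{\Gamma_\pm}$ along the closed curves, its right-hand side must have zero mean on $\T$; a residue computation gives $\int_{\Gamma_\pm}\F\,dx = \alpha^*$ (the simple pole of $K_{x_0}$ contributes $\alpha^*$, while $K_{x_0}'$ has residue zero), so the mean-zero condition reduces to $\pm\int\B_\pm\,ds = \alpha$. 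Combined with \eqref{bcond:1}, this is consistent precisely when $\alpha=\int\dev{\B}$, which determines $\alpha$.

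Once $\nabla g|_{\Gamma_\pm}$ has been integrated along the curves (up to additive constants), we must further require that $g$ itself be single-valued on each closed curve, i.e.\ $\int(\nabla g\cdot\partial_s\map)_\pm\,ds=0$. Integrating by parts and invoking the second moment $\int_{\Gamma_\pm}\F(x)\,x\,dx = \alpha^* x_0 - \beta$ (obtained from the partial-fraction identity $K_{x_0}(x)x = (2\pi i)^{-1}+x_0 K_{x_0}(x)$ and a further integration by parts on $K_{x_0}'x$), this reads $\pm\int\B_\pm\cdot\map_\pm\,ds = \alpha\cdot x_0 - \beta$; combining with \eqref{bcond:2} then forces $\beta=\alpha\cdot x_0-\int\dev{\B\cdot\map}$, as asserted. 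Since the boundary values of both $g$ and $\nabla g$ on $\Gamma_\pm$ are then determined, I would extend $g$ to a function in $C^3(\Omega_{\mathrm{tur}})\cap C^2(\bar{\Omega}_{\mathrm{tur}})$ by a standard tubular-neighborhood construction; the resulting $R$ will have the claimed form, regularity and traces. The main subtlety throughout is the careful bookkeeping of the two independent period obstructions stemming from the hole in $\Omega_{\mathrm{tur}}$, which are matched precisely by the two free parameters $\alpha\in\C$ and $\beta\in\R$ in $\F$.
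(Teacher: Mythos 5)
Your argument is correct in substance and arrives at the same conclusion, but it reorganizes the proof in two ways worth recording. For necessity, testing $\Div R=0$ against constant fields and against the rotation field $x^\perp$ yields \eqref{bcond:1} and \eqref{bcond:2} directly: the cancellation of the bulk term (symmetric contracted with antisymmetric) replaces the paper's intermediate construction of the stream potential $\Psi$, which is how the paper reaches \eqref{bcond:2} only after writing $R-\mathrm{M}(\alpha^*K_{x_0})$ in terms of $\Psi$ and applying Gauss to $\Div\Psi=0$. This shortcut is valid and arguably cleaner. For sufficiency, your bookkeeping of the two period obstructions --- the residues $\int_{\Gamma_\pm}\F\dif x=\alpha^*$ and $\int_{\Gamma_\pm}x\,\F\dif x=\alpha^*x_0-\beta$ matched against $\pm\int\B_\pm$ and $\pm\int\B_\pm\cdot\map_\pm$, with \eqref{bcond:1}--\eqref{bcond:2} supplying the consistency between the $+$ and $-$ equations --- coincides with the paper's, as does the reduction of the boundary condition to $\partial_s(\nabla g)_{\pm}=\pm\B_{\pm}-(\F\partial_s\map)_{\pm}^*$. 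One practical remark: the paper replaces your ``standard tubular-neighbourhood extension'' by an explicit cubic Lagrange interpolation in the transversal coordinate $\lambda$. Either produces the required $g\in C^3(\Omega_{\mathrm{tur}})\cap C^2(\bar{\Omega}_{\mathrm{tur}})$ at a fixed time slice, but the explicit coefficients $l_k$ of that interpolant are precisely what Propositions \ref{p:solvability} and \ref{prop:|R|} manipulate afterwards to obtain uniform-in-$t$ bounds, so the explicit choice is not cosmetic downstream.

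The one genuine omission is that you construct \emph{a} solution of the stated form, whereas the lemma, as it is invoked in Proposition \ref{prop:|R|} (``for any uniformly bounded solution $R$ of \eqref{R} \dots let us decompose $R$ as \dots''), is used as a structural representation of \emph{every} solution $R\in C^1(\Omega_{\mathrm{tur}};\mathcal{S})\cap C(\bar{\Omega}_{\mathrm{tur}};\mathcal{S})$. Your necessity direction only extracts the two scalar compatibility conditions, not the representation. To close this, one needs the two-step potential argument you bypassed: since $\Div R=0$ and the period $\int_\gamma\bigl(R-\mathrm{M}(\alpha^*K_{x_0})\bigr)\dif y^\perp$ vanishes on the annulus, each row admits a potential, producing $\Psi$; symmetry of $R$ forces $\Div\Psi=0$; subtracting $i\beta K_{x_0}^*$ kills the remaining period so that $\Psi=\nabla^\perp g+i\beta K_{x_0}^*$, and the Cauchy--Riemann relations convert this into the Hessian form in the statement. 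Equivalently, it suffices to show that any homogeneous solution (with $\B_\pm=0$, hence $\alpha=\beta=0$) is of the form $\bigl(\begin{smallmatrix}\partial_{22}g_0 & -\partial_{12}g_0\\ -\partial_{12}g_0&\partial_{11}g_0\end{smallmatrix}\bigr)$, which is the same argument. Without this step your proof establishes the equivalence and one representative, but not the characterization the paper relies on later.
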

\begin{proof} Let us start assuming that $R$ is a solution to \eqref{R}. Then, Gauss divergence theorem implies \eqref{bcond:1}
$$
0=\int_{\Omega_{\mathrm{tur}}}\Div R
=2\int\dev{R\partial_s\map^\perp}
=2i\int\mean{\B}.
$$
But then, for any simple closed curve $\gamma\in C^1(\T;\Turzone)$ $\circlearrowleft$-surrounding $x_0$ we have $\int_{\gamma}R\dif y^\perp=i\int\dev{\B}$.
Consequently, there exists $\Psi=(\psi_1,\psi_2)\in C^2(\Omega_{\mathrm{tur}};\R^2)\cap C^1(\bar{\Omega}_{\mathrm{tur}};\R^2)$ so that
\begin{equation}\label{eqR:1}
R
=\left(\begin{array}{cc}
-\partial_2\psi_1 & \partial_1\psi_1\\
-\partial_2\psi_2 & \partial_1\psi_2
\end{array}\right)+\mathrm{M}(\alpha^* K_{x_0}).
\end{equation}
Indeed, for any simple closed curve $\gamma\in C^1(\T;\Turzone)$ $\circlearrowleft$-surrounding $x_0$, using $\mathrm{M}(z)w=-(zw)^*$, we have
$$
\int_{\gamma}(R-\mathrm{M}(\alpha^*K_{x_0}))\dif x^\perp=\int_{\gamma}R\dif x^\perp-i\alpha=i\int\dev{\B}-i\alpha=0.
$$
Therefore $\Psi$ can be recovered via
$$\Psi(x)-\Psi_{x_{\mathrm{tur}}}
=\int_{\gamma}R\dif y^\perp-i\alpha\left(\int_{\gamma}K_{x_0}\dif y\right)^*,$$
for any path $\gamma\in C^1([0,1];\Turzone\setminus L_{x_0})$ with  $\gamma(0)=x_{\mathrm{tur}}$ and $\gamma(1)=x$, where $\Psi(x_{\mathrm{tur}})=\Psi_{x_{\mathrm{tur}}}$ is an arbitrary constant vector.

Now, since $R$ is symmetric, necessarily $\Div\Psi=0$. Hence, since
$$\partial_s\Psi_{\pm}
=((\nabla\Psi)\partial_s\map)_{\pm}
=((R-\mathrm{M}(\alpha^*K_{x_0}))\partial_s\map^\perp)_{\pm}
=i(\pm\B-\alpha(K_{x_0}\partial_s\map)^*)_{\pm}
,$$
with
$$\int(\alpha^* K_{x_0}\partial_s\map)_{\lambda}^*\cdot\map_{\lambda}
=\alpha\cdot\int_{\map_{\lambda}}xK_{x_0}\dif x
=\alpha\cdot x_0,$$
Gauss divergence theorem implies \eqref{bcond:2}
$$0=\int_{\Omega_{\mathrm{tur}}}\Div\Psi
=2\int\dev{\Psi\cdot\partial_s\map^\perp}
=2\int\dev{\partial_s\Psi^\perp\cdot\map}
=-2\int\mean{\B\cdot\map}.$$
Therefore, there is some $g\in C^3(\Omega_{\mathrm{tur}})\cap C^2(\bar{\Omega}_{\mathrm{tur}})$ so that
\begin{equation}\label{eqR:3}
\Psi=\nabla^\perp g+i\beta K_{x_0}^*.
\end{equation}
Indeed, analogously to above, $g$ can be recovered via
$$g(x)-g_{x_{\mathrm{tur}}}
=\int_{\gamma}\Psi\cdot\dif y^\perp-\beta\Re\int_{\gamma}K_{x_0}\dif y,$$
for any path $\gamma\in C^1([0,1];\Turzone\setminus L_{x_0})$ with  $\gamma(0)=x_{\mathrm{tur}}$ and $\gamma(1)=x$, where $g(x_{\mathrm{tur}})=g_{x_{\mathrm{tur}}}$ may be chosen and $\beta\in\C$ is given by the circulation to guarantee the continuity of $g$ on $\Turzone$
$$0=\int_{\map_{\lambda}}\Psi\cdot\dif x^\perp-\beta\Re\int_{\map_{\lambda}}K_{x_0}\dif x
=\left(\alpha\cdot x_0-\int\dev{\B\cdot\map}\right)-\beta.$$
Now, \eqref{eqR:3} and the Cauchy-Riemann equations yield
$$\left(\begin{array}{cc}
-\partial_2\psi_1 & \partial_1\psi_1\\
-\partial_2\psi_2 & \partial_1\psi_2
\end{array}\right)
=\left(\begin{array}{rr}
\partial_{22}g & -\partial_{12}g\\
-\partial_{12}g & \partial_{11}g
\end{array}\right)
+\mathrm{M}(\beta K_{x_0}').$$
\indent Finally, $g$ must satisfy the boundary conditions
\begin{equation}\label{eqR:4}
\partial_s(\nabla g)_{\pm}
=(\nabla^2g\partial_s\map)_{\pm}
=-i((R-\mathrm{M}\F)\partial_s\map^\perp)_{\pm}
=\underbrace{\pm\B_{\pm}-(\F\partial_s\map)_{\pm}^*}_{\equiv\A_{\pm}}.
\end{equation}

Notice that
$$\int\A_{\pm}
=\pm\int\B_{\pm}-\left(\int_{\map_{\pm}}\F\dif x\right)^*=\int\dev{\B}-\alpha=0.$$
Then, \eqref{eqR:4} reads as
\begin{equation}\label{consmomGamma5}
(\nabla g)_{\pm}=\int_0^s\A_{\pm}\dif s_1+o_{\pm},
\end{equation}
for some constant vectors $o_{\lambda}\in\R^2$. Then, since 
$\nabla G=\nabla\map(\nabla g)^\sharp$, 
\eqref{consmomGamma5} is equivalent to
\begin{subequations}
\label{Gcond:1}
\begin{align}
(\partial_s G)_{\pm}&=\left(\int_0^s\A_{\pm}\dif s_1+o_{\pm}\right)\cdot\partial_s\map_{\pm},\label{Gpm:1}\\
(\partial_{\lambda} G)_{\pm}&=\left(\int_0^s\A_{\pm}\dif s_1+o_{\pm}\right)\cdot\vectV.\label{Glambdapm:1}
\end{align}
\end{subequations}
But then notice that
\begin{align*}
\int\left(\int_0^s\A_{\pm}\dif s_1+o_{\pm}\right)\cdot\partial_s\map_{\pm}
&=-\int (\A\cdot\map)_{\pm}
=\Re\int_{\map_{\pm}}x\F\dif x\mp\int (\B\cdot\map)_{\pm}\\
&=(\alpha\cdot x_0-\beta)-\int\dev{\B\cdot\map}=0.
\end{align*}
Hence, \eqref{Gpm:1} reads as
\begin{equation}\label{Gpm}
G_{\pm}=\int_0^s\left(\int_0^{s_1}\A_{\pm}\dif s_2+o_{\pm}\right)\cdot\partial_s\map_{\pm}\dif s_1+ d_{\pm},
\end{equation}
for some constants $d_{\lambda}\in\R$.

\smallskip

Conversely, the easiest way to define $G$ in the interior from the boundary conditions \eqref{Gpm} \eqref{Glambdapm:1} is by means of the Lagrange interpolation. Since there are four conditions, we consider the Lagrange polynomial of degree 3 on $\lambda$ 
\begin{equation}\label{ansatzL}
L(s,\lambda):=\sum_{k=0}^{3}l_k(s)\lambda^k,
\end{equation}
whose coefficients $l_k$ are determined by
\begin{equation*}
\left(\begin{array}{rrrr}
1 & 1 & 1 & 1 \\
1 & -1 & 1 & -1 \\
0 & 1 & 2 & 3 \\
0 & 1 & -2 & 3
\end{array}\right)
\left(\begin{array}{c}
l_0 \\ l_1 \\ l_2 \\ l_3
\end{array}\right)
=\left(\begin{array}{c}
G_+ \\ G_- \\ (\partial_{\lambda}G)_+ \\ (\partial_{\lambda}G)_-
\end{array}\right).
\end{equation*} 
The solution of the above linear system is
\begin{align*}
l_0&=\mean{G}-\tfrac{1}{2}\dev{\partial_{\lambda}G},&
l_1&=\tfrac{1}{2}(3\dev{G}-\mean{\partial_{\lambda}G}),\\
l_2&=\tfrac{1}{2}\dev{\partial_{\lambda}G},&
l_3&=\tfrac{1}{2}(\mean{\partial_{\lambda}G}-\dev{G}).
\end{align*}
Thus, any $g$ has the form $g^\sharp=G=L+H$, for some solution $H$ to the homogeneous problem $(\partial_{\lambda}^kH)_{\pm}=0$ for $k=0,1$. 
This concludes the proof.
\end{proof}

Lemma~\ref{R:solution} above shows how $R(t)$ is at each time slice $0<t\leq T$. Next, we must guarantee that $R(t)$ remains uniformly bounded as $t\downarrow 0$.

\begin{proof}[Proof of Proposition~\ref{p:solvability}]  
Recalling the definition of $G=g^\sharp$ from \eqref{eq:defG} we have
\begin{equation}\label{eq:derivG}
\begin{split}
\partial_sG&=(\nabla g)^\sharp\cdot\partial_s\map,\\
\partial_{\lambda}G&=t(\nabla g)^\sharp\cdot\vectW,
\end{split}
\end{equation}
and
\begin{subequations}
\label{D2g}
\begin{align}
\partial_{ss}G&=\partial_s\map \cdot(\nabla^2g)^\sharp \partial_s\map+(\nabla g)^\sharp\cdot\partial_{ss}\map,\\
\partial_{s\lambda}G&=t\partial_s\map\cdot(\nabla^2g)^\sharp\vectW+t(\nabla g)^\sharp\cdot\partial_{s}\vectW,\\
\partial_{\lambda\lambda}G&=t^2\vectW\cdot (\nabla^2g)^\sharp\vectW.
\end{align}
\end{subequations}
Then, $\nabla^2g$ (equivalently $R$) is uniformly bounded if and only if, as $t\downarrow 0$,
\begin{equation}\label{Gorder}
\partial_s^k\partial_{\lambda}^nG=\mathcal O(t^n),\quad
0\leq k+n\leq 2.
\end{equation}
By considering the Taylor expansion of $\lambda\mapsto G(t,s,\lambda)$ on $[-1,1]$ we see that \eqref{Gorder} implies 
$$
 G_+-G_-=\mathcal O(t),\quad
\partial_\lambda G_+-\partial_\lambda G_-=\mathcal O(t^2),\quad 
(G_++\partial_\lambda G_+)-(G_--\partial_\lambda G_-)=\mathcal O(t^2),
$$
that is,
\begin{subequations}
\label{tcond}
\begin{align}
\mean{\partial_{\lambda}^n G}= \mathcal O(t^{n}),\quad
\dev{\partial_{\lambda}^n G}&=\mathcal O(t^{n+1}),\quad n=0,1,
\label{tcond:1}\\
\dev{G}-\mean{\partial_{\lambda}G}&=\mathcal O(t^2).\label{tcond:2}
\end{align}
\end{subequations}

Using \eqref{Gpm}\eqref{Glambdapm:1}, these terms are
\begin{align*}
\mean{G}
&=\int_0^s\left(\left(\int_0^{s_1}\mean{\A}\dif s_2+\mean{o}\right)\cdot\partial_s\zcurve+t\left(\int_0^{s_1}\dev{\A}\dif s_2+\dev{o}\right)\cdot\partial_s\vectW\right)\dif s_1+\mean{d},\\
\dev{G}
&=\int_0^s\left(\left(\int_0^{s_1}\dev{\A}\dif s_2+\dev{o}\right)\cdot\partial_s\zcurve+t\left(\int_0^{s_1}\mean{\A}\dif s_2+\mean{o}\right)\cdot\partial_s\vectW\right)\dif s_1+\dev{d},\\
\mean{\partial_{\lambda}G}
&=t\left(\int_0^s\mean{\A}\dif s_1
+\mean{o}\right)\cdot\vectW,\\
\dev{\partial_{\lambda}G}
&=t\left(\int_0^s\dev{\A}\dif s_1
+\dev{o}\right)\cdot\vectW.
\end{align*}
Observe that $\mean{\partial_{\lambda}^n G}= \mathcal O(t^{n})$ for $n=0,1$. The conditions $\dev{\partial_{\lambda}^n G}= \mathcal O(t^{n+1})$ for $n=0,1$ implies that $\dev{\A}^{(0)}=\dev{o}^{(0)}=\dev{d}^{(0)}=0$. In particular, since $\mean{\B}^{(0)}=\dev{\A}^{(0)}=0$, the zero-mean condition \eqref{bcond:1} reads as \ref{Bq}:
$$
\mean{\B}=t\partial_s(q\partial_s\zcurve),
$$
for some $q(t,s)=q_1(t,s)+iq_2(t,s)$. We may assume w.l.o.g.~that
\begin{equation}\label{qs=0}
(q\partial_s\zcurve)|_{s=0}=\dev{o}^{(1}+\dashint_{-1}^{1}\F_\lambda\dif\lambda\vectV|_{s=0}.
\end{equation}
By \ref{Bq}, the zero-mean condition \eqref{bcond:2} reads as \ref{p:solvability:average}:
$$0=\int\mean{\B\cdot\map}
=\int(\mean{\B}\cdot\zcurve+t\dev{\B}\cdot\vectW)
=t\int(\dev{\B}\cdot\vectW-q_1|\partial_s\zcurve|^2).$$
Coming back to \eqref{tcond:2}, an integration by parts yields
$$
t\int_0^s\left(\int_0^{s_1}\mean{\A}\dif s_2+\mean{o}\right)\cdot\partial_s\vectW\dif s_1
=\mean{\partial_{\lambda}G}-t\mean{o}\cdot\vectV^{(1)}(0)-t\int_0^s\mean{\A}\cdot\vectW\dif s_1,
$$
from which we deduce that
\begin{equation}\label{jumpslope:1}
\begin{split}
\dev{G}-\mean{\partial_{\lambda}G}\\
=t\bigg(\int_0^s\left(\left(\int_0^{s_1}\dev{\A}^{(1}
\dif s_2+\dev{o}^{(1}\right)\cdot\partial_s\zcurve-\mean{\A}\cdot\vectW\right)\dif s_1+\underbrace{\dev{d}^{(1}-\mean{o}\cdot\vectV^{(1)}(0)}_{\equiv\tilde{d}}\bigg).
\end{split}
\end{equation}
In particular, \eqref{tcond:2} implies that $\tilde{d}^{(0)}=0$. Let us split \eqref{jumpslope:1} in terms of $\B_{\lambda}$ and  $\F$.
On the one hand, 
$$\mean{\A}\cdot\vectW
=\dev{\B}\cdot\vectW-\Re(\mean{\F\partial_s\map}\vectW).$$
On the other hand, since $\F(t)\in\mathrm{Hol}(\C\setminus\{x_0\})$, Cauchy's integral theorem implies
$$\int_0^{s_1}\dev{\F\partial_s\map}\dif s_2
=\dashint_{-1}^{1}\F_\lambda\dif\lambda\vectV\Big|_0^{s_1},$$
and, by \eqref{qs=0},
\begin{equation}\label{b1:1}
\begin{split}
\int_0^{s_1}\dev{\A}^{(1}\dif s_2+\dev{o}^{(1}
&=\int_0^{s_1}\mean{\B}^{(1}\dif s_2+\dev{o}^{(1}-\left(\dashint_{-1}^{1}\F_\lambda\dif\lambda\vectW\Big|_0^{s_1}\right)^*\\
&=q\partial_s\zcurve-\left(\dashint_{-1}^{1}\F_\lambda\dif\lambda\vectW\right)^*.
\end{split}
\end{equation}
Therefore, \eqref{jumpslope:1} reads as
\begin{equation}
\label{jumpslope:2}
\dev{G}-\mean{\partial_{\lambda}G}
=t\int_0^s\bigg(q_1|\partial_s\zcurve|^2
-\dev{\B}\cdot\vectW
+t\tilde{d}^{(1}+\Re(F\vectW)\bigg)\dif s_1,
\end{equation}
where we have abbreviated
$$F\equiv
\mean{\F\partial_s\map}
-\dashint_{-1}^{1}\F_{\lambda}\dif\lambda\partial_s\zcurve.$$
It is straightforward to check that $F^{(0)}=F^{(1)}=0$, so $F=t^2F^{(2}$. 
Therefore, the condition \eqref{tcond:2} requires $q_1^{(0)}=\dev{\B}^{(0)}\cdot\vectW$, i.e. \ref{p:solvability:pointwise}.

Conversely, the Lagrange polynomial $L$ given in \eqref{ansatzL} satisfies \eqref{Gorder} if and only if 
$$\partial_s^k l_n,\ldots,\partial_s^k l_3=\mathcal{O}(t^n),\quad 0\leq k+n\leq 2,$$
which is indeed equivalent to \eqref{tcond}.
\end{proof}

\section{The dissipation}\label{sec:Admissibility}

Recall from Definition \ref{defi:subsolution} and Theorem \ref{thm:hprinciple} that our weak solutions $(v,p)$ to (IE) are obtained from a subsolution $(\bar{v},\bar{p},R)$ via convex integration, and satisfy 
\begin{equation}\label{D:1}
\tfrac{1}{2}|v|^2=e, \quad\quad
e+p=\tfrac{1}{2}(|\bar v|^2+\tr R)+\bar p,
\end{equation}
where
\begin{equation}\label{def:e}
e:=\tfrac{1}{2}|\bar{\velocity}|^2+|\mathring{R}|+e',
\end{equation}
for some error function $e'$ strictly positive on  $\Turzone$ while vanishing outside.
This allows us to calculate the associated dissipation measure:

\begin{prop}\label{prop:dissipation}
Let $(\bar{\velocity},\bar{p},R)$ be a strict subsolution w.r.t.~some $e\in C^0(\Turzone;\R_+)$ and assume that it is $C^1$ outside $\Gamma=\Gamma_-\cup\Gamma_+=\partial\Turzone$ with $\Gamma_{\pm}$ parametrized by $C^1$ curves $\map_{\pm}=\map_{\pm}(t,s)$. Then the dissipation measure $D(t)$ from Definition \ref{defi:dissipation} is supported in $\Turzone$ with
\begin{equation}\label{eq:dissipation}
\begin{split}
\langle D(t),\psi\rangle 
&=\sum_{\lambda=\pm 1}\int_0^t\int ([e]_\lambda\partial_t\map_\lambda+[(e+p)\bar{v}]_\lambda)\cdot\partial_s\map_\lambda^\perp\psi_\lambda\dif s\dif\tau\\
&-\int_0^t\int_{\Turzone(\tau)}\left(\partial_t(e-\tfrac{1}{2}|\bar v|^2)-\bar v\cdot\Div\mathring{R}+(v-\bar v)\cdot\nabla(e+p)\right)\psi\dif x\dif\tau.
\end{split}
\end{equation}
\end{prop}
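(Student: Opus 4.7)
My approach is to unpack the defining identity $\langle D(t),\psi\rangle=\int_0^t\int_{\R^2}(e\partial_\tau\psi+(e+p)v\cdot\nabla\psi)\dif x\dif\tau-\int_{\R^2}e\psi\dif x\big|_0^t$ by splitting the spatial integral over the three disjoint moving regions $\Omega_+(\tau)$, $\Omega_-(\tau)$, $\Turzone(\tau)$ and integrating by parts in each. Outside the turbulence zone $(v,p)=(\bar v,\bar p)$ with $e=\tfrac12|\bar v|^2$ is a classical Euler solution, so the pointwise energy identity $\partial_\tau e+\Div((e+p)v)=0$ holds; combining the Reynolds transport theorem (for the moving domain $\Omega_\lambda(\tau)$) with classical Gauss then kills the interior contribution in $\Omega_\pm$ and leaves only a boundary integral on $\Gamma_\lambda$ with integrand $\{(e+p)\bar v-e\partial_\tau\map_\lambda\}^{(\Omega_\lambda)}\cdot n^{\Omega_\lambda}\psi$. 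The bulk time-boundary terms produced by the transport theorem assemble to $\int_{\R^2}e\psi\dif x\big|_0^t$ and precisely cancel the corresponding term in the definition of $D(t)$.

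Inside $\Turzone$ the weak solution $v$ is only $L^\infty$, so I split $v=\bar v+(v-\bar v)$. The $\bar v$-piece is handled by classical integration by parts using $\Div\bar v=0$, producing an interior contribution $-\int_{\Turzone}\bar v\cdot\nabla(e+p)\psi\dif x$ and a boundary contribution on the $\Turzone$-side traces of $(e+p)\bar v$. For the perturbation $(v-\bar v)$, the weak identity $\Div(v-\bar v)=0$ justifies integration by parts in the distributional sense, yielding $-\int_{\Turzone}(v-\bar v)\cdot\nabla(e+p)\psi\dif x$ plus a residual boundary term on $\Gamma_\lambda$. The main technical point is that this residual vanishes: since $v$ and $\bar v$ are globally in $L^\infty$ and divergence-free distributionally, each admits an $H^{-1/2}(\Gamma_\lambda)$-trace of its normal component continuous across the interface, and because $v=\bar v$ on $\Omega_\lambda$ we deduce $v^{(\Turzone)}\cdot n=\bar v^{(\Omega_\lambda)}\cdot n=\bar v^{(\Turzone)}\cdot n$ on $\Gamma_\lambda$, hence $(v-\bar v)^{(\Turzone)}\cdot n=0$. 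An explicit check confirms this: the Sokhotski--Plemelj jump \eqref{velocity:traces}, $\J{\bar v}{\lambda}=-\tfrac12\varpi_\lambda/\partial_s\map_\lambda^*$, is purely tangential since $\J{\bar v}{\lambda}\cdot\partial_s\map_\lambda^\perp=\Re(-i\varpi_\lambda/2)=0$.

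Assembling the boundary contributions from $\Omega_\pm$ and $\Turzone$ at each $\Gamma_\lambda$, using the conventions $f^{(\Omega_\lambda)}-f^{(\Turzone)}=\lambda\J{f}{\lambda}$ and $n^{\Omega_\lambda}\dif S=-\lambda\partial_s\map_\lambda^\perp\dif s$, reorganizes the boundary integrals into the announced jump form $(\J{e}{\lambda}\partial_\tau\map_\lambda+\J{(e+p)\bar v}{\lambda})\cdot\partial_s\map_\lambda^\perp\psi_\lambda$. To recast the surviving interior integrand $\partial_\tau e+\bar v\cdot\nabla(e+p)+(v-\bar v)\cdot\nabla(e+p)$ in the stated form, I take the inner product of the Euler--Reynolds equation $\partial_t\bar v+\bar v\cdot\nabla\bar v+\Div\mathring R+\tfrac12\nabla\tr R+\nabla\bar p=0$ with $\bar v$, use $\Div\bar v=0$, and substitute $e+p=\tfrac12(|\bar v|^2+\tr R)+\bar p$ on $\Turzone$ (from \eqref{D:1}); this yields $\partial_t(\tfrac12|\bar v|^2)+\bar v\cdot\nabla(e+p)=-\bar v\cdot\Div\mathring R$, which converts the integrand into $\partial_t(e-\tfrac12|\bar v|^2)-\bar v\cdot\Div\mathring R+(v-\bar v)\cdot\nabla(e+p)$ as required. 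Finally, $\sop D\subset\bar\Turzone$ is immediate since $v=\bar v$ solves smooth Euler on the open complement of $\bar\Turzone$.
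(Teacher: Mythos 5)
Your proof is correct and follows essentially the same route as the paper's: split $v=\bar v+(v-\bar v)$, integrate by parts over the moving regions to produce the jump terms on $\Gamma_\pm$, and rewrite the interior integrand via the identity obtained by contracting the Euler--Reynolds system with $\bar v$ together with $e+p=\tfrac12(|\bar v|^2+\tr R)+\bar p$. Your explicit verification that the normal trace of $v-\bar v$ vanishes on $\Gamma_\lambda$ (via the tangentiality of the Sokhotski--Plemelj jump) makes precise a step the paper treats tersely under the phrase ``using that $v,\bar v\in L^\infty_\sigma$''.
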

\begin{proof} 
Since $(\bar{\velocity},\bar{p},R)$ is piecewise $C^1$ outside $\Gamma=\partial\Turzone$, by multiplying the (relaxed) momentum balance equation \eqref{ER} by $\bar{\velocity}$ we get
$$\tfrac{1}{2}\partial_t|\bar{\velocity}|^2
+\Div((\tfrac{1}{2}|\bar{\velocity}|^2+\bar{p})\bar{\velocity})+\bar{\velocity}\cdot\Div R=0
\quad\textrm{outside }\Gamma,$$
or, by \eqref{D:1}, equivalently
\begin{equation}\label{D:2}
\tfrac{1}{2}\partial_t|\bar{\velocity}|^2
+\Div((e+p)\bar{\velocity})+\bar{\velocity}\cdot\Div \mathring{R}=0
\quad\textrm{outside }\Gamma.
\end{equation}
Firstly, by adding and subtracting $\bar{\velocity}$, we split the dissipation into
\begin{subequations}
\label{Dsplit}
\begin{align}
\langle D(t),\psi\rangle
&=\int_{0}^{t}\int_{\R^2}(e\partial_t\psi+(e+p)\bar{\velocity}\cdot\nabla\psi)\dif x\dif\tau
-\int_{\R^2}e\psi\dif x\Big|_{\tau=0}^{\tau=t}\label{Dsubsolution}\\
&+\int_{0}^{t}\int_{\R^2}(e+p)(\velocity-\bar{\velocity})\cdot\nabla\psi\dif x\dif\tau,\label{Dfluctuation}
\end{align}
\end{subequations}
where the first term \eqref{Dsubsolution} only depends on the subsolution by \eqref{D:1}\eqref{def:e}, while the second term \eqref{Dfluctuation} is the corresponding fluctuation.
On the one hand, similarly to the proof of Proposition~\ref{prop:consmom}, an integrating by parts yields
\begin{subequations}
\begin{align}
\eqref{Dsubsolution}
&=\sum_{\lambda=\pm 1}\int_0^t\int ([e]_\lambda\partial_t\map_\lambda+[(e+p)\bar{v}]_\lambda)\cdot\partial_s\map_\lambda^\perp\psi_\lambda\dif s\dif\tau\\
&-\int_0^t\int_{\Turzone(\tau)}(\partial_te+\Div((e+p)\bar{\velocity}))\psi\dif x\dif\tau,\label{D:3}
\end{align}
\end{subequations}
where we have applied that $(\bar{\velocity},\bar{p},R)=(\velocity,p,0)$ outside $\Turzone$ and \eqref{D:2}. Indeed, by \eqref{D:2}, the term \eqref{D:3} reads as
$$\eqref{D:3}
=-\int_0^t\int_{\Turzone(\tau)}(\partial_t(e-\tfrac{1}{2}|\bar v|^2)-\bar v\cdot\Div\mathring{R})\psi\dif x\dif\tau.$$
On the other hand, using that $\velocity,\bar{\velocity}\in L_{\sigma}^\infty$, an integration by parts yields
$$\eqref{Dfluctuation}
=-\int_0^t\int_{\Turzone(\tau)}(v-\bar v)\cdot\nabla(e+p)\psi\dif x\dif\tau.$$
This concludes the proof.
\end{proof}

Note that there is some ambiguity in Definition \ref{defi:subsolution} because the trace part of $R$ may be absorbed into the pressure; in particular we have
$$
R+\bar p\textrm{ Id}=\bar{R}+p\textrm{ Id},
$$
where $\bar R=\mathring{R}+(e-\tfrac12|\bar v|^2)\mathrm{ Id}$ (cf.~formulas \eqref{D:1}\eqref{def:e}). Nevertheless, the expression \eqref{eq:dissipation}, which does not depend on the specific choice of $\bar p$ and $\tr R$, is well-defined.\\

Our aim now is to calculate the initial dissipation measure in terms of $(\zcurve_0,\varpi_0)$.
In particular, since $e>\tfrac{1}{2}|\bar{v}|^2+|\mathring{R}|$, having defined $(\bar{v},\bar{p})$ by the \emph{ansatz} \eqref{ansatzomega} and Bernoulli's law \eqref{Blaw} our next aim is to minimize $|\mathring{R}|$ at time $t=0$ among all solutions of the boundary value problem \eqref{R}. Recall our notation $R^\sharp(t,s,\lambda)=R(t,\map_{\lambda}(t,s))$.

\begin{prop}\label{prop:|R|} In general, for any uniformly bounded solution $R$ of \eqref{R}, 
$$
|\mathring{R}^{\sharp(0)}|
\geq |\dev{\B}^{(0)}\cdot\partial_s\zcurve_0^\perp|.
$$
Equality is attained if and only if $\tr R^{\sharp(0)}=2\dev{\B}^{(0)}\cdot\partial_s\zcurve_0$, which can be achieved if, in the setting of Proposition \ref{p:solvability}, we have in addition
\begin{enumerate}[(a)]
\addtocounter{enumi}{3}
\item\label{prop:|R|:1} $q_2^{(0)}=c\{\B\}^{(0)}\cdot\partial_sz_0$;	
\item\label{prop:|R|:2} $\left(q_1|\partial_sz|^2-\{\B\}\cdot\vectW\right)^{(1)}=0$.
\end{enumerate}
\end{prop}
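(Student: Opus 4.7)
\emph{Proof proposal.} The statement naturally splits into an algebraic lower bound with its equality characterization, and an achievability claim for the equality under the additional conditions \ref{prop:|R|:1}--\ref{prop:|R|:2}.

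For the lower bound I would pass to the limit $t\downarrow 0$ in the boundary conditions \eqref{R:2}. Since $\map_\pm(0,s)=\zcurve_0(s)$ with common tangent $\partial_s\zcurve_0$, and $R\in C(\bar\Omega_{\mathrm{tur}})$, the two traces $R_\pm$ collapse to a common symmetric matrix $R^{\sharp(0)}$; adding the two boundary conditions reproduces $\mean{\B}^{(0)}=0$ (automatic from \ref{Bq}), while subtracting them gives the key identity
\[
R^{\sharp(0)}\partial_s\zcurve_0^\perp \;=\; i\dev{\B}^{(0)}.
\]
The algebraic input is the elementary fact that every traceless symmetric $2\times 2$ matrix $\mathring S$ satisfies $\mathring S^2=|\mathring S|^2\mathrm{Id}$, so $|\mathring S\xi|=|\mathring S|\,|\xi|$ for all $\xi$. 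Writing $\mathring R^{\sharp(0)}\partial_s\zcurve_0^\perp = R^{\sharp(0)}\partial_s\zcurve_0^\perp-\tfrac12(\tr R^{\sharp(0)})\partial_s\zcurve_0^\perp$ and decomposing $i\dev{\B}^{(0)}=(\dev{\B}^{(0)})^\perp$ in the orthonormal frame $\{\partial_s\zcurve_0,\partial_s\zcurve_0^\perp\}$ produces
\[
|\mathring R^{\sharp(0)}|^2 \;=\; (\dev{\B}^{(0)}\cdot\partial_s\zcurve_0^\perp)^2 + \bigl(\tfrac12\tr R^{\sharp(0)}-\dev{\B}^{(0)}\cdot\partial_s\zcurve_0\bigr)^2,
\]
from which both the stated inequality and the equality condition $\tr R^{\sharp(0)}=2\dev{\B}^{(0)}\cdot\partial_s\zcurve_0$ follow at once.

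For the achievability I would use the explicit representation of $R$ from Lemma~\ref{R:solution}. Because $\mathrm{M}$ is traceless, $\tr R=\Delta g$, so the task reduces to prescribing $\Delta g(0,\zcurve_0(s))=2\dev{\B}^{(0)}\cdot\partial_s\zcurve_0$. Expressing the Laplacian in the orthonormal frame $(\partial_s\zcurve_0,\partial_s\zcurve_0^\perp)$ via \eqref{D2g} and inserting \eqref{consmomGamma5} into the resulting formula for $(\partial_{ss}G)^{(0)}$ produces an exact cancellation of the curvature term $(\nabla g)^{\sharp(0)}\cdot\partial_{ss}\zcurve_0$, leaving
\[
\tr R^{\sharp(0)} \;=\; \A^{(0)}\cdot\partial_s\zcurve_0 + \frac{1}{c^2}(\partial_{\lambda\lambda}G)^{(2)}.
\]
The first summand is evaluated directly from the already-established limits of $\A_\pm$, which bring in $\dev{\B}^{(0)}$ together with a contribution from $\F^{\sharp(0)}$; the second is computed by expanding the Lagrange interpolant \eqref{ansatzL} and using \eqref{jumpslope:2} together with the Cauchy-type identity \eqref{b1:1}, which makes $q\,\partial_s\zcurve$ appear explicitly. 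Condition \ref{p:solvability:pointwise} already fixes the $\vectW$-component of $q^{(0)}\partial_s\zcurve_0$, condition \ref{prop:|R|:1} prescribes the complementary component so as to absorb the $\F^{\sharp(0)}$-contribution, and condition \ref{prop:|R|:2} removes the first-order obstruction in the expansion of $\dev G-\mean{\partial_\lambda G}$ that would otherwise introduce spurious $s$-dependence into $(\partial_{\lambda\lambda}G)^{(2)}$; taken together they match the above expression to $2\dev{\B}^{(0)}\cdot\partial_s\zcurve_0$. The freedom in the homogeneous part $H$ of $G$ (vanishing to first order on $\Gamma_\pm$) can be exploited to guarantee the $\lambda$-independence of $(\partial_{\lambda\lambda}G)^{(2)}$ required for $R$ to be continuous at $\zcurve_0$.

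The first part is essentially one symmetric-matrix identity and should be routine. The main technical obstacle lies in the achievability: performing the second-order-in-$t$ Taylor expansion of the formulas from the proof of Proposition~\ref{p:solvability}, keeping simultaneous track of the $\mean{\cdot}$/$\dev{\cdot}$ splittings at $\Gamma_\pm$ and the Taylor orders in $t$, and verifying that \ref{prop:|R|:1}--\ref{prop:|R|:2} are precisely sufficient---without over-determining $q$ or conflicting with \ref{Bq}--\ref{p:solvability:pointwise}---to pin the trace down to the prescribed value.
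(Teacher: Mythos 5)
Your argument is correct in substance, but its two halves relate to the paper's proof differently. For the lower bound and the equality characterization you take a genuinely shorter route: the paper derives the identity $|\mathring{R}^{\sharp(0)}|=|\dev{\B}^{(0)}-\tfrac12\tr R^{\sharp(0)}\partial_s\zcurve_0|$ by going through the explicit representation of $R$ in terms of $g$ and $\F$ from Lemma~\ref{R:solution} and computing $(\nabla g)^{\sharp(0)}$, whereas you read the same information directly off the boundary conditions \eqref{R:2} together with the elementary identity $|\mathring S\xi|=|\mathring S|\,|\xi|$ for traceless symmetric $2\times2$ matrices. This bypasses the potential-theoretic machinery entirely and is arguably cleaner. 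One imprecision needs fixing: the two traces $R^{\sharp(0)}_{\pm}$ need \emph{not} coincide for a general uniformly bounded solution (the paper's own computation of $\tr R^{\sharp(0)}$ contains a term proportional to $l_3^{(2)}\lambda$, which vanishes only under condition \ref{prop:|R|:2}), so ``subtracting the two boundary conditions'' is not legitimate as stated. The repair is immediate: condition \ref{Bq} gives $\mean{\B}^{(0)}=0$, hence $\B_\lambda^{(0)}=\lambda\dev{\B}^{(0)}$, and each boundary condition \emph{separately} yields $R^{\sharp(0)}_\lambda\partial_s\zcurve_0^\perp=i\dev{\B}^{(0)}$ for each $\lambda$; your algebra then applies verbatim to each trace.

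For the achievability your plan coincides with the paper's: $\tr R=\Delta g$, the frame computation from \eqref{D2g} with the cancellation of the curvature term, the expansion of the Lagrange interpolant via \eqref{jumpslope:2} and \eqref{b1:1}, and the identification of \ref{p:solvability:pointwise}, \ref{prop:|R|:1}, \ref{prop:|R|:2} (plus the choice $H=0$) as exactly what pins down $\tr R^{\sharp(0)}=2\dev{\B}^{(0)}\cdot\partial_s\zcurve_0$. One minor slip in your bookkeeping: since $q\partial_s\zcurve_0=q_1\partial_s\zcurve_0+q_2\partial_s\zcurve_0^\perp$ and $\vectW=c\,\partial_s\zcurve_0^\perp$, it is \ref{prop:|R|:1} that fixes the $\vectW$-component $q_2^{(0)}$ and \ref{p:solvability:pointwise} that fixes the tangential one, not the other way around.
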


\begin{proof}
\noindent{\bf Step 1.}
First of all we claim that $|\mathring{R}^\sharp|$ at time $t=0$ is given by the formula
\begin{equation}\label{|R|:1}
|\mathring{R}^{\sharp(0)}|=|\dev{\B}^{(0)}-\tfrac{1}{2}\tr R^{\sharp(0)}\partial_s\zcurve_0|.
\end{equation} 
Recalling Proposition \ref{R:solution} let us decompose $R$ as
$$\mathring{R}=\mathrm{M}(\tfrac{1}{2}(\partial_{11}-\partial_{22})g-i\partial_{12}g+\F),
\quad\quad\tr R=\Delta g.$$
Therefore $|\mathring{R}|=|\tfrac{1}{2}(\partial_{11}-\partial_{22})g+i\partial_{12}g+\F^*|$.
In particular, since
$$(\tfrac{1}{2}(\partial_{11}-\partial_{22})g+i\partial_{12}g)^\sharp\partial_s\map^*
=\partial_s(\nabla g)^\sharp-\tfrac{1}{2}(\Delta g)^\sharp\partial_s\map,$$
we have
\begin{equation}\label{|R|:2}
|\mathring{R}^\sharp||\partial_s\map|
=|\partial_s(\nabla g)^\sharp+(\F^\sharp\partial_s\map)^*-\tfrac{1}{2}\tr R^\sharp\partial_s\map|.
\end{equation}
Solving the linear system \eqref{eq:derivG} and applying $\partial_{\lambda}G^{(0)}=0$ we obtain
\begin{equation}\label{nablaf}
(\nabla g)^\sharp
=\frac{1}{\partial_s\map\cdot\partial_s\zcurve_{0}}\left(\partial_sG\partial_s\zcurve_0+c^{-1}(\partial_{\lambda}G)^{(1}\partial_s\map^\perp\right).
\end{equation}
Hence, recalling the decomposition $G=L+H$ in \eqref{ansatzL} and the proof of Proposition~\ref{p:solvability}, we deduce
\begin{equation*}
\begin{split}
(\nabla g)^{\sharp (0)}
&=\left((\partial_s G)^{(0)}+ic^{-1}(\partial_\lambda G)^{(1)}\right)\partial_s\zcurve_0\\
&=\left(\partial_s l_0^{(0)}+ic^{-1} l_1^{(1)}\right)\partial_s\zcurve_0\\
&=\left(\partial_s\langle G\rangle^{(0)}+ic^{-1}\langle\partial_\lambda G\rangle^{(1)}\right)\partial_s\zcurve_0\\
&=\int_0^s\mean{\A}^{(0)}\dif s_1 +\mean{o}^{(0)}.	
\end{split}	
\end{equation*}
Therefore, \eqref{|R|:1} follows from
$$
(\partial_s(\nabla g)^\sharp+(\F^\sharp\partial_s\map)^*)^{(0)}
=\dev{\B}^{(0)}.
$$

\bigskip

\noindent{\bf Step 2.}
Next, we derive the formula
$$
\tr R^{\sharp(0)}
=\dev{\B}^{(0)}\cdot\partial_s\zcurve_0+c^{-1}q_2^{(0)}
+c^{-2}(6l_3\lambda+\partial_\lambda^2 H)^{(2)}.
$$
Let us consider the equivalent matrix $S=S(t,s,\lambda)$
\begin{equation}
S:=Q(\nabla^2g)^\sharp Q,
\end{equation}
given by the orthogonal change of basis $Q:=\mathrm{M}(-\partial_s\zcurve_0^*)$ ($Q^2=\mathrm{ Id}$), that is,
$$(\nabla^2g)^\sharp=QSQ
\quad\textrm{and}\quad
\tr R=\tr S.$$
Thus, using
$Q\partial_s\zcurve_0=(1,0)$, $Q\partial_s\zcurve_0^\perp=(0,1)$, 
\eqref{D2g} yields
\begin{align*}
(\partial_s^2G)^{(0)}&=s_{11}^{(0)}+(\nabla g)^{\sharp (0)}\cdot\partial_s^2\zcurve_0,\\
(\partial_\lambda^2G)^{(2)}&=c^2s_{22}^{(0)}.
\end{align*}
Hence, since
$$(\partial_s^2G)^{(0)}
=\partial_s((\nabla g)^{\sharp (0)}\cdot\partial_s\zcurve_0)
=\mean{\A}^{(0)}\cdot\partial_s\zcurve_0+(\nabla g)^{\sharp (0)}\cdot\partial_s^2\zcurve_0,$$
we have
$$\tr S^{(0)}
=\mean{\A}^{(0)}\cdot\partial_s\zcurve_0+c^{-2}(\partial_\lambda^2G)^{(2)},$$
with $G=L+H$ and
$$(\partial_\lambda^2L)^{(2)}
=2l_2^{(2)}+6l_3^{(2)}\lambda.$$
On the one hand, by \eqref{b1:1} we get
$$2l_2^{(2)}
=\dev{\partial_{\lambda}G}^{(2)}
=\left(\int_0^s\dev{\A}^{(1)}\dif s_1
+\dev{o}^{(1)}\right)\cdot\vectW
=cq_2^{(0)}-c^2(\F^{(0)}\partial_s\zcurve_0)^*\cdot\partial_s\zcurve_0,$$
with $(\F^{(0)}\partial_s\zcurve_0)^*=(\dev{\B}-\mean{\A})^{(0)}$.
On the other hand, by \eqref{jumpslope:2} we get
$$
2l_3^{(2)}=(\mean{\partial_{\lambda}G}-\dev{G})^{(2)}
=\int_0^s(\dev{\B}\cdot\vectW-q_1|\partial_s\zcurve|^2-t\tilde{d})^{(1)}\dif s_1.
$$
Recalling the average condition \ref{p:solvability:average} in Proposition \ref{p:solvability}, then $l_3^{(2)}=0$ if and only if $\tilde{d}^{(1)}=0$ and the condition \ref{prop:|R|:2} above holds.
This concludes the proof by taking $H=0$.
\end{proof}


\begin{prop}\label{prop:D0} Let us assume that there is $(\zcurve,\varpi)$ satisfying the conditions \ref{Bq}-\ref{prop:|R|:2} stated in Propositions~\ref{p:solvability} and \ref{prop:|R|}. 
Then, the dissipation measure \eqref{eq:dissipation} satisfies $D\in\mathcal{M}_c([0,T]\times\R^2)$ with $\sop D\subset\bar{\Omega}_{\mathrm{tur}}$. Moreover,
given $0<\varepsilon\leq\ell$ and $N=1$, \eqref{local:1} holds.
\end{prop}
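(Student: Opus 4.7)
The plan is to establish (i) the support and measure property of $D$, and (ii) the local dissipation rate (1.5), in that order.

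For (i), I would first observe that outside $\bar\Omega_{\mathrm{tur}}$ one has $R = 0$ and $v = \bar v$, with $\bar v$ real-analytic there (the Biot--Savart field of a vortex sheet supported on $\Gamma_\pm$) and satisfying $(\mathrm{IE})$ classically by the Bernoulli derivation (2.19)--(2.23). The local energy identity $\partial_t e + \Div((e+p)v) = 0$ then holds pointwise on $\Omega_\pm$, so $\sop D \subset \bar\Omega_{\mathrm{tur}}$. Finiteness of $D$ as a Radon measure follows by inspecting (4.2): under conditions (a)--(e), the boundary densities on $\Gamma_\pm$ are bounded, and the interior integrand on $\Omega_{\mathrm{tur}}$ lies in $L^\infty$ after an integration by parts moving $\nabla$ off $e+p$. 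This justifies the extension of $D$ to the indicator function $\psi_I$.

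For (ii), I would parametrize $\Omega_{\mathrm{tur}}(\tau) \ni x = \zcurve(\tau, s) + \lambda\tau c(s)\partial_s\zcurve_0^\perp(s)$ with Jacobian $\tau c(s) + \mathcal O(\tau^2)$. Since the interior integrand is $L^\infty$, its contribution to $\langle D(t_2) - D(t_1), \psi_I\rangle$ is $\mathcal O((t_2 - t_1)t_2)$, producing only an $\mathcal O(t_2)$ term in (1.5). For the boundary contribution, $\psi_{I,\lambda}(\tau, s) = \car{I}(s)$, and I would replace the density $F_\lambda(\tau, s) := ([e]_\lambda \partial_t\map_\lambda + [(e+p)\bar v]_\lambda)\cdot\partial_s\map_\lambda^\perp$ by its limit $F_\lambda^{(0)}(s)$ at $\tau = 0$, which is continuous under (a)--(e) and introduces only an $\mathcal O(t_2)$ error after dividing by $t_2 - t_1$. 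The heart of the proof is then the algebraic identification of $\sum_\lambda F_\lambda^{(0)}$ with the integrand of $W_I^{(1)}(c)$.

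For that identification, Sokhotski--Plemelj on the two merging sheets yields $\BRO_\lambda^{(0)} = \mathrm{BR}_0 - \lambda\tfrac14\varpi_0/\partial_s\zcurve_0^*$ and $[\bar v]_\lambda^{(0)} = -\tfrac12\varpi_0/\partial_s\zcurve_0^*$; tangentiality of the latter kills $[\bar v]_\lambda^{(0)}\cdot\partial_s\zcurve_0^\perp$. Plugging in Proposition~\ref{prop:|R|} with (d) (giving $|\mathring R|^{\sharp(0)} = \tfrac12 c|\varpi_0|$ and $\tr R^{\sharp(0)} = 0$), the Bernoulli jump (2.20) (giving $[\bar p]_\lambda^{(0)} = -\lambda\tfrac18\varpi_0^2$, which cancels the $\lambda$-odd part of $\tfrac12[|\bar v|^2]_\lambda^{(0)}$ so that $[e+p]_\lambda^{(0)}$ is independent of $\lambda$), and the Birkhoff--Rott normal relation from (a) (giving $\partial_t\zcurve^{(0)}\cdot\partial_s\zcurve_0^\perp = \mathrm{BR}_0\cdot\partial_s\zcurve_0^\perp$), a careful tracking of the jumps should yield $j_+ - j_- = |\varpi_0|(\tfrac14|\varpi_0| - c)$ and the target identity
\begin{equation*}
\sum_{\lambda=\pm 1}F_\lambda^{(0)}(s) = c(s)|\varpi_0(s)|\left(\tfrac14|\varpi_0(s)| - c(s)\right),
\end{equation*}
after which integrating over $I \times [0, t_2]$ and dividing by $t_2 - t_1$ gives (1.5).

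The hard part will be this final algebraic step, in particular the cancellation of the non-local contributions depending on $\mathrm{BR}_0$ upon summing $\lambda = \pm 1$. These non-local terms appear both in the energy jump $\tfrac12[|\bar v|^2]_\lambda^{(0)}$ (via $\BRO_\lambda^{(0)}\cdot\partial_s\zcurve_0$) and in the transport coefficient $\BRO_\lambda^{(0)}\cdot\partial_s\map_\lambda^\perp$, and their cancellation hinges on the precise $\lambda$-dependence of $\BRO_\lambda^{(0)}$ provided by Sokhotski--Plemelj, together with the pressure jump (2.20) and condition (a); without them the initial dissipation rate would retain a dependence on the full classical Birkhoff--Rott velocity, contradicting the purely local form of $W_I^{(1)}(c)$.
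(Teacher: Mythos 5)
Your overall architecture matches the paper's: split $D$ via Proposition~\ref{prop:dissipation}, kill the interior and fluctuation contributions as $\mathcal{O}(t_2)$, and evaluate the boundary densities at $t=0$. Two remarks on the easy part first: the claim that the interior integrand is bounded is not free — it is exactly Lemma~\ref{lemma:vRcontrol} (boundedness of $\nabla_{(t,x)}\tr R$, $\nabla_{(t,x)}|\mathring R|$ and $\nabla(e+p)$, which requires the singular-integral estimates of Section~\ref{sec:BRO}); and for the fluctuation term the paper additionally invokes the freedom in the convex-integration step to make $\int(\velocity-\bar\velocity)\cdot\nabla(e+p)\psi_I$ as small as desired, although your crude bound (bounded integrand times $|\Turzone(\tau)|=\mathcal O(\tau)$) does also yield $\mathcal O(t_2)$ after the time average.

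The genuine gaps are in the boundary computation. First, your stated values are mutually inconsistent: Proposition~\ref{prop:|R|} gives $|\mathring{R}^{\sharp(0)}|=\tfrac12 c|\varpi_0|$ precisely when $\tr R^{\sharp(0)}=2\dev{\B}^{(0)}\cdot\partial_s\zcurve_0=-\tfrac14\varpi_0^2$, not $0$; consequently $[e+p]_\lambda^{(0)}=-[\partial_t\phi]_\lambda^{(0)}-\tfrac{\lambda}{2}\tr R_\lambda^{(0)}$ retains the $\lambda$-odd piece $\tfrac{\lambda}{8}\varpi_0^2$ and is \emph{not} independent of $\lambda$ (this particular error happens to wash out of the $\lambda$-sum because it multiplies the $\lambda$-even factor $\BRO_0\cdot\partial_s\zcurve_0^\perp$, but as written your ledger cannot all be correct). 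Second, and more seriously, the identity $\sum_{\lambda}F_\lambda^{(0)}=c|\varpi_0|(\tfrac14|\varpi_0|-c)$ — which is the entire content of \eqref{local:1} — is only asserted (``should yield''), and you yourself flag the cancellation of the nonlocal $\BRO_0$-terms as the open hard part. That cancellation is not a routine bookkeeping exercise: the paper achieves it by first rewriting the boundary density exactly as $-\lambda\bigl(\BRO\cdot\mathring{R}\partial_s\map^\perp+(|\mathring{R}|+e')\partial_t\map\cdot\partial_s\map^\perp\bigr)_\lambda$ using the boundary condition $\pm(R\partial_s\map^\perp)_\pm=i\B_\pm$, Bernoulli's law and the relation $(\partial_t\zcurve\cdot\partial_s\zcurve^\perp)^{(0)}=\BRO_0\cdot\partial_s\zcurve_0^\perp$; only in that form is the nonlocal part manifestly $\lambda$-even (proportional to $B_0=\BRO_0\cdot((\sgn\varpi_0+i)\partial_s\zcurve_0)$), so that it is annihilated by the $-\lambda$-weighted sum while the $\lambda$-odd part produces $c|\varpi_0|(\tfrac14|\varpi_0|-c)$. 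A naive term-by-term substitution of the trace values you list (even after correcting $\tr R^{\sharp(0)}$) leaves a residual term proportional to $\varpi_0(\BRO_0\cdot\partial_s\zcurve_0)(\BRO_0\cdot\partial_s\zcurve_0^\perp)$, so the argument cannot be completed without the structural rewriting; until that step is supplied the proposition is not proved.
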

\begin{proof} Recalling Proposition \ref{prop:dissipation} we have
\begin{subequations}
\label{dissipation:1}
\begin{align}
\left\langle\frac{D(t_2)-D(t_1)}{t_2-t_1},\psi\right\rangle
&=\sum_{\lambda=\pm 1}\dashint_{t_1}^{t_2}\int ([e]_\lambda\partial_t\map_\lambda+[(e+p)\bar{v}]_\lambda)\cdot\partial_s\map_\lambda^\perp\psi_\lambda\dif s\dif\tau\label{dissipation:1:1}\\
&-\dashint_{t_1}^{t_2}\int_{\Turzone(\tau)}\left(\partial_t(e-\tfrac{1}{2}|\bar{\velocity}|^2)-\bar v\cdot\Div\mathring{R}\right)\psi\dif x\dif\tau\label{dissipation:1:2}\\
&-\dashint_{t_1}^{t_2}\int_{\Turzone(\tau)}(v-\bar v)\cdot\nabla(e+p)\psi\dif x\dif\tau.\label{dissipation:1:3}
\end{align}
\end{subequations}
\indent Concerning \eqref{dissipation:1:2}, by applying $e-\tfrac{1}{2}|\bar{\velocity}|^2=|\mathring{R}|+e'$ \eqref{def:e} and $\Div R=0$ \eqref{R:1}, we deduce
$$\eqref{dissipation:1:2}
=-\dashint_{t_1}^{t_2}\int_{\Turzone(\tau)}\left(\partial_t(|\mathring{R}|+e')+\tfrac{1}{2}\bar v\cdot\nabla\tr R\right)\psi\dif x\dif\tau.$$
Hence, by Lemma~\ref{lemma:vRcontrol} below and imposing $\partial_te'\in L^\infty(\Turzone)$,
$$\eqref{dissipation:1:2}
\leq \mathcal{O}(t_2)\norma{\psi}{\infty}.$$
\indent Concerning \eqref{dissipation:1:3}, we can guarantee that (see \cite{Onadmissibility,Degraded}), for any fixed $0<\varepsilon\leq \ell$ and time-error function $\mathcal{T}\in C(]0,T];]0,1])$, these weak solutions satisfy
$$\left|\int_{\R^2}(\velocity-\bar{\velocity})\cdot\nabla(e+p)\psi_I\dif x\right|
\leq \mathcal{T}(t),$$
for every interval $I\subset\T$ with $|I|\geq\varepsilon$ and $t\in]0,T]$. \\
\indent Concerning \eqref{dissipation:1:1}, notice that
$$\left[(e+p)\bar{\velocity}\right]_{\lambda}\cdot\partial_s\map_{\lambda}^\perp
=(e+p)_{\lambda}^+\bar{\velocity}_{\lambda}^+\cdot\partial_s\map_{\lambda}^\perp
-(e+p)_{\lambda}^-\bar{\velocity}_{\lambda}^-\cdot\partial_s\map_{\lambda}^\perp
=[e+p]_{\lambda}\BRO_{\lambda}\cdot\partial_s\map_{\lambda}^\perp.$$
Hence, since (recall \eqref{eq:potjump}-\eqref{jump|u|})
\begin{align*}
[e]_{\lambda}
&=-\tfrac{1}{2}\varpi\Re_{\partial_s\map_{\lambda}}(\BRO_{\lambda})-\lambda(|\mathring{R}|+e')_{\lambda},\\
[e+p]_{\lambda}
&=-[\partial_t\phi]_{\lambda}-\tfrac{\lambda}{2}\tr R_{\lambda},
\end{align*}
we deduce
\begin{equation}
\eqref{dissipation:1:1}
=-\sum_{\lambda=\pm 1}\lambda\dashint_{t_1}^{t_2}\int_\T
(\BRO\cdot\mathring{R}\partial_s\map^\perp
+(|\mathring{R}|+e')\partial_t\map\cdot\partial_s\map^\perp)_{\lambda}\psi_{\lambda}\dif s\dif\tau.
\end{equation}
On the one hand, as we shall in Corollary \ref{cor:BRO0} below,
\begin{equation}\label{BRL:0}
\BRO_{\lambda}^{(0)}
=\BRO_0-\tfrac{\lambda}{4}\varpi_0\partial_s\zcurve_0,
\end{equation}
with $\BRO_{0}$ given in \eqref{BRO0}. Hence
\begin{equation}\label{BRL:0.1}
\mean{\BRO}^{(0)}=\BRO_0,
\quad\quad\dev{\BRO}^{(0)}=-\tfrac{1}{4}\varpi_0\partial_s\zcurve_0,
\end{equation}
and so
\begin{equation}\label{b:0}
\pm\B_{\pm}^{(0)}
=\dev{\B}^{(0)}
=-\tfrac{1}{2}\varpi_0(\vectW-\dev{\BRO}^{(0)})
=-\tfrac{1}{2}\varpi_0\left(ic+\tfrac{1}{4}\varpi_0\right)\partial_s\zcurve_0.
\end{equation}
Then, by splitting $\mathring{R}=R-\tfrac{1}{2}(\tr R)\mathrm{Id}$ and applying Proposition~\ref{prop:|R|}, we get
$$(\mathring{R}\partial_s\map^\perp)_{\lambda}^{(0)}
=i(\dev{\B}^{(0)}
-\Re_{\partial_s\zcurve_0}\dev{\B}^{(0)}\partial_s\zcurve_0)
=-\Im_{\partial_s\zcurve_0}\dev{\B}^{(0)}\partial_s\zcurve_0
=\tfrac{1}{2}c\varpi_0\partial_s\zcurve_0.$$
On the other hand, as we shall see in Section \ref{sec:proof:1}, Proposition \ref{p:solvability} requires $(\partial_{t}\zcurve\cdot\partial_s\zcurve^\perp)^{(0)}=\BRO_{0}\cdot\partial_s\zcurve_0^\perp$.
Thus, by imposing ${e}'=\mathcal{O}(t)$, we obtain
\begin{align*}
&(\BRO\cdot\mathring{R}\partial_s\map^\perp
+(|\mathring{R}|+e')\partial_t\map\cdot\partial_s\map^\perp)_{\lambda}^{(0)}\\
&=
\left(\BRO_0-\tfrac{\lambda}{4}\varpi_0\partial_s\zcurve_0\right)\cdot\left(\tfrac{1}{2}c\varpi_0\partial_s\zcurve_0\right)+\tfrac{1}{2}c|\varpi_0|(\BRO_{0}\cdot\partial_s\zcurve_0^\perp+\lambda c)\\
&=\tfrac{1}{2}c|\varpi_0|\left(\lambda\left(c-\tfrac{1}{4}|\varpi_0|\right)+B_0\right),
\end{align*}
where $B_0:=\BRO_0\cdot((\sgn\varpi_0+i)\partial_s\zcurve_0)$. This concludes the proof.
\end{proof}

\begin{lemma}\label{lemma:Wmax}
The functional $W_{\T}^{(N)}$ has a global maximum at $c_{\max}^{(N)}=\tfrac{1}{2}\bar{c}_N|\varpi_0|$.
\end{lemma}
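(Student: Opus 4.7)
The plan is to exploit the pointwise structure of the integrand: for each fixed $s \in \T$, the integrand in \eqref{e:DF} is a concave quadratic in the scalar $c(s)$, so the global maximum of the functional is achieved by the pointwise maximizer.

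More precisely, first I would rewrite
$$W_{\T}^{(N)}(c)=\bar{a}_N\int_{\T}|\varpi_0(s)|\bigl(\bar{c}_N|\varpi_0(s)|c(s)-c(s)^2\bigr)\dif s,$$
and for each $s$ complete the square in the variable $x=c(s)$:
$$\bar{c}_N|\varpi_0|x-x^2=\tfrac14\bar{c}_N^2|\varpi_0|^2-\bigl(x-\tfrac12\bar{c}_N|\varpi_0|\bigr)^2.$$
Substituting back into the integral,
$$W_{\T}^{(N)}(c)=\bar{a}_N\int_{\T}|\varpi_0|\left[\tfrac14\bar{c}_N^2|\varpi_0|^2-\bigl(c-\tfrac12\bar{c}_N|\varpi_0|\bigr)^2\right]\dif s,$$
which exhibits $W_{\T}^{(N)}$ as a constant minus an $|\varpi_0|$-weighted $L^2$-distance from the candidate $c_{\max}^{(N)}=\tfrac12\bar{c}_N|\varpi_0|$.

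From this expression the conclusion is immediate: since $|\varpi_0|\geq 0$, the second integrand is non-negative, so $W_{\T}^{(N)}(c)\leq \bar{a}_N\cdot\tfrac14\bar{c}_N^2\int_{\T}|\varpi_0|^3\dif s$, with equality precisely when $c=c_{\max}^{(N)}$ almost everywhere on $\{|\varpi_0|>0\}$. Under the standing hypothesis $|\varpi_0|>0$ a.e.\ this determines $c$ uniquely, giving the unique global maximum. Strict concavity likewise follows from the squared term (the second variation in direction $h\in C(\T;\R)$ is $-2\bar a_N\int_{\T}|\varpi_0|h^2\dif s$, which is strictly negative unless $h=0$ a.e.\ on $\{|\varpi_0|>0\}$). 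A final verification that
$$\bar{a}_N\cdot\tfrac14\bar{c}_N^2=\tfrac{1}{3}\tfrac{2N+1}{2N-1}\cdot\tfrac{(2N-1)^2}{16N^2}=\tfrac{1}{48}\left(1-\tfrac{1}{(2N)^2}\right)$$
matches the displayed value of $\max W_{\T}^{(N)}$.

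There is no real obstacle here: the only subtlety is handling the degenerate set $\{|\varpi_0|=0\}$, on which $c$ is unconstrained by the maximization (the integrand vanishes identically there), but this is precisely where the a.e.\ positivity hypothesis on $|\varpi_0|$ in Theorem~\ref{thm:local} removes the ambiguity.
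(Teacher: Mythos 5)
Your proof is correct and is essentially the paper's argument in different clothing: the paper writes the exact expansion $W(c+\psi)-W(c)=dW_c(\psi)-\bar{a}_N\int|\varpi_0|\psi^2$ and observes that $dW_c$ vanishes at $c=\tfrac12\bar{c}_N|\varpi_0|$, which is precisely your completion of the square. One trivial slip: in your final verification the middle expression $\tfrac{1}{3}\tfrac{2N+1}{2N-1}\cdot\tfrac{(2N-1)^2}{16N^2}$ equals $\bar{a}_N\bar{c}_N^2=\tfrac{1}{12}\bigl(1-\tfrac{1}{(2N)^2}\bigr)$, i.e.\ it is missing the factor $\tfrac14$, although the outer identity $\bar{a}_N\cdot\tfrac14\bar{c}_N^2=\tfrac{1}{48}\bigl(1-\tfrac{1}{(2N)^2}\bigr)$ is correct.
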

\begin{proof}
Notice that $W\equiv W_{\T}^{(N)}$ satisfies
$$W(c+\psi)-W(c)
=dW_c(\psi)
-\bar{a}_N\int|\varpi_0|\psi^2,$$
where $dW_c$ is the Fr\'echet derivative of $W$ at $c$
$$dW_c(\psi)=\bar{a}_N\int|\varpi_0|(\bar{c}_N|\varpi_0|-2c)\psi.$$
This concludes the proof.
\end{proof}

\begin{lemma}\label{lemma:T3} Let $c$ given in \eqref{c}. Then, there is  $\epsilon(\varpi_0,\varepsilon,\delta)>0$ so that
$$W_I(c)
\geq\frac{1}{2}\delta(1-\delta)\bar{a}_N\bar{c}_N^2\dashint_{I}|\varpi_0|^3\dif s,$$ 
for every interval $I\subset\T$ with $|I|\geq\varepsilon$.
\end{lemma}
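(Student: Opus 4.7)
The approach is stability under mollification. With $p:=|\varpi_0|$, the functional evaluated at the unmollified value $c_0:=\delta\bar c_N p$ equals exactly
$$W_I^{(N)}(c_0) = \bar a_N\bar c_N^2\delta(1-\delta)\int_I p^3\,ds = \bar a_N\bar c_N^2\delta(1-\delta)\,|I|\dashint_I p^3\,ds,$$
so the claim amounts to saying that perturbing $c_0$ by mollifying $p$ at scale $\epsilon$ costs at most a factor of $\tfrac12$. I plan to make this quantitative by controlling the error in $L^\infty$ via the H\"older regularity of $\varpi_0$, then to choose $\epsilon$ uniformly over the admissible family of intervals.

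Setting $\phi:=p\ast\eta_\epsilon$ so that $c=\delta\bar c_N\phi$, the elementary pointwise identity
$$\phi\, p\,(p-\delta\phi) - (1-\delta)p^3 = p(\phi-p)\bigl[(1-\delta)p - \delta\phi\bigr]$$
isolates the leading term. Using $\|\phi\|_\infty\leq\|p\|_\infty$ and the standard mollifier estimate $\|\phi-p\|_\infty\lesssim\snorma{p}{\alpha}\epsilon^\alpha$ (available since $|\varpi_0|\in C^{k_0,\alpha}(\T)$ with $k_0=4,\alpha>0$), integration of the identity over $I$ yields
$$\left|W_I^{(N)}(c) - \bar a_N\bar c_N^2\delta(1-\delta)\int_I p^3\,ds\right|\leq C(\varpi_0,\delta)\,\epsilon^\alpha\,|I|.$$

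Next I would invoke the standing hypothesis $|\varpi_0|>0$ a.e.: combined with continuity of $p$ and compactness of the family $\{I\subset\T:|I|\geq\varepsilon\}$ (with endpoints varying continuously), this forces
$$\mu(\varepsilon):=\min_{|I|\geq\varepsilon}\dashint_I p^3\,ds > 0.$$
Choose $\epsilon=\epsilon(\varpi_0,\varepsilon,\delta)>0$ small enough that $C(\varpi_0,\delta)\epsilon^\alpha\leq\tfrac{1-\delta}{2}\mu(\varepsilon)$. Then, using $\int_I p^3\,ds\geq|I|\mu(\varepsilon)$,
$$W_I^{(N)}(c) \geq \bar a_N\bar c_N^2\delta(1-\delta)\int_I p^3\,ds - \tfrac{1-\delta}{2}\bar a_N\bar c_N^2\delta\,\mu(\varepsilon)|I| \geq \tfrac12\bar a_N\bar c_N^2\delta(1-\delta)\int_I p^3\,ds,$$
which is the claim after rewriting $\int_I p^3\,ds=|I|\dashint_I p^3\,ds$ and using $|I|\geq\varepsilon$.

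The main obstacle is the uniformity of $\epsilon$: a single choice must work simultaneously for every interval $I$ of length $\geq\varepsilon$, which is exactly what the strictly positive lower bound $\mu(\varepsilon)>0$ delivers. Without the hypothesis $|\varpi_0|>0$ a.e., $\mu(\varepsilon)$ could degenerate along a sequence of intervals on which $|\varpi_0|$ becomes arbitrarily small, and the mollification scale would have to shrink unboundedly with $I$.
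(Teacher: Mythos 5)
Your argument is correct and is essentially the paper's own proof: both isolate the main term $\bar a_N\bar c_N^2\delta(1-\delta)\int_I|\varpi_0|^3$, bound the mollification error (the paper via $\|f\|_{L^\infty}^2\|f\ast\eta_\epsilon-f\|_{L^1(I)}$, you via the $L^\infty$ H\"older estimate — both work), and obtain uniformity of $\epsilon$ over all intervals $|I|\geq\varepsilon$ from the continuity and strict positivity of $s_0\mapsto\int_{[s_0,s_0+\varepsilon]}|\varpi_0|^3\dif s$ on the compact $\T$. The only caveat, shared with the paper's own computation, is the bookkeeping of the factor $|I|$ when passing between $\int_I$ and $\dashint_I$ in the final line, which is harmless for the way the lemma is used.
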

\begin{proof}
Let $c=\delta\bar{c}_Nf_{\epsilon}$ where we have abbreviated $f\equiv|\varpi_0|$ and $f_{\epsilon}\equiv f*\eta_\epsilon$. We split
\begin{align*}
&\dashint_{I}c|\varpi_0|(\bar{c}_N|\varpi_0|-c)\dif s
=\frac{\delta\bar{c}_N^2}{|I|}\int_{I}f_\epsilon f(f-\delta f_\epsilon)\dif s\\
&=\frac{\delta\bar{c}_N^2}{|I|}\left((1-\delta)\int_{I}f^3\dif s
+(1-\delta)\int_{I}(f_\epsilon-f)f^2\dif s
+\delta\int_{I}f_\epsilon f(f-f_\epsilon)\dif s\right)\\
&\geq\frac{\delta\bar{c}_N^2}{|I|}\left((1-\delta)\int_{I}f^3\dif s-\norma{f}{L^\infty(I)}^2\norma{f_\epsilon-f}{L^1(I)}\right).
\end{align*}
Finally, since
$$s_0\in\T\mapsto\int_{[s_0,s_0+\varepsilon]}f^3\dif s>0$$
is continuous, the lemma holds.
\end{proof}

\begin{lemma}\label{lemma:vRcontrol} The $L^\infty(\Turzone)$-norm of $\nabla_{(t,x)}\tr R$, $\nabla_{(t,x)}|\mathring{R}|$ and $\nabla(e+p)$ are controlled by $\norma{\zcurve}{2,\alpha}$ and $\norma{\varpi}{1,\alpha}$.
\end{lemma}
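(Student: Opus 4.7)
The plan is to reduce each of the three quantities to expressions in $g$, $\F$ and $\partial_t\phi$, and then bound the latter through the Lagrange-interpolation formula and the Birkhoff-Rott analysis of Section~\ref{sec:BRO}.

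First I would exploit the structural identities collected in Lemma~\ref{R:solution} and Proposition~\ref{prop:|R|}. Since $\mathrm{M}\F\in\mathcal{S}_0$ we have
$$
\tr R=\Delta g,\qquad \mathring{R}=\mathrm{M}\bigl(\tfrac{1}{2}(\partial_{11}-\partial_{22})g-i\partial_{12}g+\F\bigr),
$$
and by \eqref{D:1} together with Bernoulli's law \eqref{Blaw}, $e+p=\tfrac{1}{2}\tr R-\partial_t\phi$ in $\Turzone$. Hence controlling $\nabla_{(t,x)}\tr R$ and $\nabla_{(t,x)}|\mathring{R}|$ amounts to controlling third derivatives of $g$ together with first derivatives of $\F$, while $\nabla(e+p)$ requires in addition a bound on $\nabla\partial_t\phi$.

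Second I would bound the derivatives of $g$ via the Lagrange polynomial $G=L$ introduced in \eqref{ansatzL}. The coefficients $l_k(s,t)$ are explicit affine combinations of $\mean{G},\dev{G},\mean{\partial_\lambda G},\dev{\partial_\lambda G}$, which in turn are given in \eqref{Gpm}--\eqref{Glambdapm:1} by iterated $s$-integrals of $\A_\pm=\pm\B_\pm-(\F\partial_s\map)^*_\pm$. Conditions \ref{Bq}--\ref{prop:|R|:2} guarantee $\partial_s^k\partial_\lambda^n G=\mathcal{O}(t^n)$ for $k+n\leq 3$, so inverting the change of variables \eqref{eq:derivG}--\eqref{D2g} (which is non-degenerate by the chord-arc constant $\CA{\zcurve_0}$ for $t$ small) yields bounds
$$
\norma{\nabla^j g}{L^\infty(\Turzone)}\lesssim \sum_{k+n\leq j}t^{-n}\norma{\partial_s^k\partial_\lambda^n G}{L^\infty}\lesssim\norma{\A}{C^{j-1}}+\norma{\F}{C^{j-1}}
$$
for $j=2,3$. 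The operator norm $|\mathring{R}|$ is then Lipschitz in the matrix entries, so differentiating it also reduces to bounding $\nabla^3 g$ and $\nabla\F$; note that $\F=\alpha^*K_{x_0}+\beta K'_{x_0}$ is smooth away from $x_0\in\Omega_+(0)$, hence smooth on $\bar{\Turzone}$ with norm controlled by $\int|\dev{\B}|$ and $\int|\dev{\B\cdot\map}|$.

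Third I would bound the boundary data and $\partial_t\phi$ in terms of $\norma{\zcurve}{2,\alpha}$ and $\norma{\varpi}{1,\alpha}$. The definition \eqref{R:4}
$$
\B_\lambda=\tfrac{1}{2}(\partial_t\tilde{\varpi}\partial_s\map_\lambda-\varpi(\partial_t\map_\lambda-\BRO_\lambda))
$$
together with the Birkhoff-Rott estimates of Section~\ref{sec:BRO} (which give $\BRO_\lambda\in C^{1,\alpha}$ from $\zcurve\in C^{2,\alpha}$, $\varpi\in C^{1,\alpha}$) yields $\norma{\B}{1,\alpha}\lesssim 1$. The same Hölder estimates applied to the Cauchy-type integral \eqref{phit} bound $\norma{\nabla\partial_t\phi}{L^\infty(\Turzone)}$ (one may write $\partial_t\phi=\Re\,\Phi$ with $\Phi$ holomorphic off $\Gamma$ and with Hölder jump, so the gradient is a Cauchy integral of a $C^{\alpha}$ density). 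The remaining dependence of $\partial_t\zcurve$, $\partial_t\tilde{\varpi}$ on $(\zcurve,\varpi)$ is prescribed by the equations of motion of the ansatz (Section~\ref{sec:proof}), which again only involve the Birkhoff-Rott operator applied to $(\zcurve,\varpi)$.

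The main obstacle will be the delicate $t$-scaling: the Lagrange construction forces $\partial_\lambda^n G=\mathcal{O}(t^n)$, and $n=3$ is precisely what is needed to keep $\nabla\Delta g$ bounded uniformly as $t\downarrow 0$; losing even half a power of $t$ would blow up the estimate. This is why the matching conditions \ref{p:solvability:pointwise},\ref{prop:|R|:1},\ref{prop:|R|:2} (which already appear in Propositions~\ref{p:solvability} and \ref{prop:|R|}) must be used in full, and why the Hölder regularity $\varpi\in C^{1,\alpha}$ cannot be weakened in this argument---it is exactly what Section~\ref{sec:BRO} requires to promote $\BRO_\lambda$ from bounded to Lipschitz, which in turn makes $\B\in C^{1,\alpha}$ and hence makes third derivatives of $g$ finite up to the boundary of $\Turzone$.
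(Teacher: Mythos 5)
Your treatment of $\nabla(e+p)$ is essentially the paper's: one writes $e+p=\tfrac12\tr R-\partial_t\phi$ and bounds $\nabla\partial_t\phi$ through the Cauchy-type integral \eqref{phit}. One caveat: the relevant kernel is $(x-\map_{\lambda}(t,s))^{-2}$, a second-order pole, so a merely $C^{\alpha}$ density is not enough near $\Gamma_\lambda(t)$; the paper subtracts both the increment of the density $f_\lambda$ and that of $\partial_s\map_\lambda$ (Cauchy's integral formula plus the $C^{1,\alpha}$ regularity of $f_\lambda$) before invoking Lemma~\ref{geometriclemma} and Lemma~\ref{lemma:index}. Your one-line justification should be upgraded to this double subtraction, but it stays within the regularity you assume.

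The first two quantities are where your route diverges and where the gap lies. You reduce $\nabla_{(t,x)}\tr R$ and $\nabla_{(t,x)}|\mathring{R}|$ to $\nabla^3g$ and assert that conditions \ref{Bq}--\ref{prop:|R|:2} give $\partial_s^k\partial_\lambda^nG=\mathcal O(t^n)$ for $k+n\le 3$. Proposition~\ref{p:solvability} only establishes \eqref{Gorder} for $k+n\le 2$; the order-three statements ($\partial_\lambda^3G=\mathcal O(t^3)$, $\partial_s\partial_\lambda^2G=\mathcal O(t^2)$, $\partial_s^3G$ bounded, \ldots) are additional matching conditions that must be verified, and the last of them costs a derivative you do not have: from \eqref{Gpm}, $\partial_s^3G_\pm$ contains the term $\left(\int_0^s\A_\pm\dif s_1+o_\pm\right)\cdot\partial_s^3\map_\pm$, hence $\partial_s^3\zcurve$, which is not controlled by $\norma{\zcurve}{2,\alpha}$. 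The paper never differentiates $g$ three times: it pulls back the scalars $h=\tr R$ and $h=|\mathring{R}|$ themselves, observes that $\partial_\lambda h^{\sharp(0)}=0$ (a consequence of Proposition~\ref{prop:|R|}, since $\tr R^{\sharp(0)}=2\dev{\B}^{(0)}\cdot\partial_s\zcurve_0$ is independent of $\lambda$), and applies the first-order inversion formula \eqref{nablaf} to $h$ in place of $g$ to bound $\nabla h$; the time derivative is then handled via $\partial_th^\sharp=(\partial_th)^\sharp+(\nabla h)^\sharp\cdot\partial_t\map$, the explicit expression \eqref{|R|:2}, and the a.e.\ inequality $|\partial_t|F||\le|\partial_tF|$ (which also plays the role of your appeal to Lipschitz continuity of the operator norm). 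You should either switch to this argument or supply a proof of the order-three version of \eqref{Gorder} together with the extra regularity it demands.
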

\begin{proof}
Concerning the first two terms $h=\tr R$ and $|\mathring{R}|$, since $\partial_{\lambda}h^{\sharp(0)}=0$ by Proposition \ref{prop:|R|}, we apply \eqref{nablaf} for $h$ (instead of $g$) to deduce that $\nabla h$ is bounded.\\
Hence, since
$\partial_th^\sharp
=(\partial_th)^\sharp+(\nabla h)^\sharp\cdot\partial_t\map$ 
and, by \eqref{|R|:2},
$$\partial_t|\mathring{R}|^\sharp
=\partial_t\left(\frac{|\partial_s(\nabla g)^\sharp+(\F^\sharp\partial_s\map)^*-\tfrac{1}{2}\tr R^\sharp\partial_s\map|}{|\partial_s\map|}\right),$$
the statement follows by using the (a.e.) inequality $|\partial_t|F||\leq|\partial_tF|$.\\
\indent For $\nabla(e+p)$,
recalling \eqref{phit}\eqref{Blaw} and \eqref{D:1}, it is enough to control
$$
\nabla(\tfrac12|\bar{\velocity}|^2+\bar{p})(t,x)^*
=\frac{1}{2}\sum_{\lambda=\pm 1}\frac{1}{2\pi i}\int_{\T}\frac{(\partial_{t}\tilde{\varpi}\partial_s\map_{\lambda}-\varpi\partial_t\map_{\lambda})(t,s)}{(x-\map_{\lambda}(t,s))^2}\dif s,
\quad x\in\Turzone(t).
$$	
Let us abbreviate here $f_{\lambda}\equiv(\partial_{t}\tilde{\varpi}\partial_s\map_\lambda-\varpi\partial_t\map_\lambda)$. Then, the Cauchy's integral formula yields
\begin{align*}
\int_{\T}\frac{f_{\lambda}(t,s+\xi)}{(x-\map_{\lambda}(t,s+\xi))^2}\dif\xi
&=\int_{\T}\frac{f_{\lambda}(t,s+\xi)-f_{\lambda}(t,s)}{(x-\map_{\lambda}(t,s+\xi))^2}\dif\xi\\
&-\frac{f_{\lambda}(t,s)}{\partial_s\map_{\lambda}(t,s)}\int_{\T}\frac{\partial_s\map_{\lambda}(t,s+\xi)-\partial_s\map_{\lambda}(t,s)}{(x-\map_{\lambda}(t,s+\xi))^2}\dif\xi.
\end{align*}
By adding and subtracting $\xi\partial_sf_{\lambda}(t,s)$ and also $\xi\partial_s^2\map_{\lambda}(t,s)$ we gain integrability (cf.~Lemma \ref{geometriclemma} below), while the remainder integral
$$\int_{\T}\frac{\xi}{(x-\map_{\lambda}(t,s+\xi))^2}\dif\xi$$
can be bounded easily by following the proof of Lemma \ref{lemma:index} below.
\end{proof}

\section{The Birkhoff-Rott operator}\label{sec:BRO}

In light of Proposition \ref{prop:D0}, there exists a subsolution adapted to $\Turzone$ provided $(\zcurve,\varpi)$ satisfies the conditions \ref{Bq}-\ref{prop:|R|:2} stated in Propositions~\ref{p:solvability} and \ref{prop:|R|}. In Section \ref{sec:proof} we construct such a pair $(\zcurve,\varpi)$ by setting its Taylor polynomial $(\zcurve,\varpi)^{n_0)}$ satisfying the pointwise conditions \ref{p:solvability:pointwise}-\ref{prop:|R|:2}, and the corresponding remainder $(\zcurve,\varpi)^{(n_0+1}$ satisfying the average conditions \ref{Bq}-\ref{p:solvability:average}. This requires to check that the Taylor decompositions $\BRO_{\lambda}=\BRO_{\lambda}^{n_0-1)}+t^{n_0}\BRO_{\lambda}^{(n_0}$ of the Birkhoff-Rott operators $\BRO_{\lambda}$ \eqref{BRO:0} are well-defined. This is the goal of this section. Although $n_0=3$ in Section \ref{sec:proof}, let us keep it general here.

\subsection{Geometric setup} Since $\zcurve_0$ is closed and $C^1$, it is simple and regular \eqref{z0} if and only if its \textbf{chord-arc} constant is bounded
\begin{equation}\label{CAC}
\CA{\zcurve_{0}}:=\sup_{s,\xi}\left|\triangle_\xi\zcurve_{0}(s)\right|^{-1}<\infty.
\end{equation}
In particular, $\CA{\zcurve_{0}}\geq|\partial_s\zcurve_0|^{-1}=1$ by the arc-length parametrization.\\  
\indent The chord-arc condition \eqref{CAC} is usually imposed when considering Birkhoff-Rott type operators (cf.~\cite[\S 1.1]{Wu06}) because it avoids self-intersections (simple) and peaks (regular). Moreover, it
gives a lower bound of the proximity of different points at $\zcurve_0$: $|\zcurve_0(s)-\zcurve_{0}(s')|\geq\CA{\zcurve_{0}}^{-1}|s-s'|$, thus measuring the singularity in $\BRO_{\lambda}$ at time $t=0$. For $t>0$, $\BRO_{\lambda}$ requires to compare different points at the boundary of the turbulence zone:
\begin{lemma}\label{geometriclemma} 
There exists $t_0(\norma{\zcurve}{1,\alpha},\norma{c}{1,\alpha},\CA{\zcurve_{0}})>0$ such that, for all $0\leq t\leq t_0$, the following ``equi chord-arc'' condition holds:
$$|\map_{\mu}(t,s+\xi)-\map_{\lambda}(t,s)|
\geq \tfrac{1}{4}\sqrt{\CA{\zcurve_{0}}^{-2}|\xi|^2+t^2c(s)^2(\mu-\lambda)^2},$$
for every $s,\xi\in\T$ and $\lambda,\mu\in\{-1,1\}$.
\end{lemma}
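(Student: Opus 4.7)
The strategy is to decompose the displacement into a tangential part, a pure normal separation, and a small cross-term, and to exploit the fact that the normal separation is orthogonal to the initial tangent $e := \partial_s\zcurve_0(s)$. Concretely, write
\begin{equation*}
\map_\mu(t,s+\xi) - \map_\lambda(t,s) = A + B + C,
\end{equation*}
with $A := \zcurve(t,s+\xi) - \zcurve(t,s)$, $B := t(\mu-\lambda)\vectW(s)$ and $C := \mu t\bigl(\vectW(s+\xi) - \vectW(s)\bigr)$. Since $\vectW(s) = c(s)\,\partial_s\zcurve_0(s)^\perp$, the vector $B$ is purely normal to $\zcurve_0$ at $s$ with $|B| = t|c(s)||\mu-\lambda|$ and $B \cdot e = 0$, while $|C| \leq t|\xi|\,\norma{\partial_s\vectW}{\infty}$ is a small error for small $t$.

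A preliminary step shows that $\zcurve(t,\cdot)$ inherits a chord-arc condition from $\zcurve_0$: from the identity
\begin{equation*}
A - \bigl(\zcurve_0(s+\xi) - \zcurve_0(s)\bigr) = \int_0^\xi \bigl(\partial_s\zcurve(t,s+\eta) - \partial_s\zcurve_0(s+\eta)\bigr)\, d\eta,
\end{equation*}
the $\mathcal O(t)$ smallness of $\partial_s\zcurve(t,\cdot) - \partial_s\zcurve_0$ (controlled by $\norma{\zcurve}{1,\alpha}$) combined with the chord-arc property of $\zcurve_0$ gives $|A| \geq (2\CA{\zcurve_0})^{-1}|\xi|$ for $t \leq t_1$ small.

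I then split on the size of $|\xi|$ relative to a threshold $\xi_0$ to be chosen. For $|\xi| \geq \xi_0$, the triangle inequality together with the chord-arc estimate above yields $|\Delta| \geq \tfrac14\CA{\zcurve_0}^{-1}|\xi|$ once $t_0$ is small enough that $|B|+|C| \leq 2t\norma{c}{\infty} + t|\xi|\,\norma{\partial_s\vectW}{\infty}$ is smaller than a quarter of $|A|$; moreover, in this regime $t|c(s)||\mu-\lambda|$ is itself bounded by $\CA{\zcurve_0}^{-1}|\xi|$ for $t_0$ small, yielding the claimed bound. For $|\xi| < \xi_0$, I project onto the orthonormal basis $\{e, e^\perp\}$. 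The identities $\partial_s\zcurve_0(s+\eta)\cdot e = 1 + \mathcal O(|\eta|^2)$ (from $|\partial_s\zcurve_0|^2 \equiv 1$) and $\partial_s\zcurve_0(s+\eta)\cdot e^\perp = \mathcal O(|\eta|)$, combined with $\partial_s\zcurve(t,\cdot) - \partial_s\zcurve_0 = \mathcal O(t)$, give on integration
\begin{align*}
\Delta \cdot e &= \xi + \mathcal O\bigl((|\xi|^2 + t)|\xi|\bigr), \\
\Delta \cdot e^\perp &= t(\mu-\lambda)c(s) + \mathcal O\bigl((|\xi| + t)|\xi|\bigr).
\end{align*}
If the normal error is at most $\tfrac12 t|c(s)||\mu-\lambda|$, both projections retain half of their leading terms and $|\Delta|^2 = (\Delta\cdot e)^2 + (\Delta\cdot e^\perp)^2 \gtrsim |\xi|^2 + t^2c(s)^2(\mu-\lambda)^2$. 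Otherwise, $|\xi|(|\xi|+t) \gtrsim t|c(s)||\mu-\lambda|$ forces $t^2c(s)^2(\mu-\lambda)^2 \lesssim |\xi|^2$ (for $t_0$ small), and the tangential bound $|\Delta\cdot e| \geq |\xi|/2$ alone already dominates $\CA{\zcurve_0}^{-2}|\xi|^2 + t^2c(s)^2(\mu-\lambda)^2$.

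The main obstacle is the small-$|\xi|$ regime when $c(s)$ may be small or vanish, where the normal projection loses its leading order — but the right-hand side of the claim degenerates proportionally, so the tangential projection picks up the slack. The final thresholds $\xi_0$ and $t_0$ are chosen small (in terms of $\norma{\zcurve}{1,\alpha}$, $\norma{c}{1,\alpha}$ and $\CA{\zcurve_0}$) so that the quadratic remainder in $\partial_s\zcurve_0(s+\eta)\cdot e$ is negligible and all implicit constants in the error estimates are absorbed by the leading terms.
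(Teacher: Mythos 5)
Your proof is correct and follows essentially the same route as the paper's: a chord-arc estimate for the evolved curve, a dichotomy between large and small $|\xi|$ handled by the triangle inequality in the first case, and in the second case the near-orthogonality of the (nearly tangential) displacement along the curve and the (normal) separation $t(\mu-\lambda)\vectW(s)$, which the paper implements via a quantitative Cauchy--Schwarz defect on the cross term $I\cdot\vectV$ rather than your projection onto $\{e,e^\perp\}$. One small slip: from $|\partial_s\zcurve_0|\equiv 1$ and $\zcurve_0\in C^{1,\alpha}$ you only get $\partial_s\zcurve_0(s+\eta)\cdot e=1+\mathcal O(|\eta|^{2\alpha})$ (via $u\cdot v=1-\tfrac12|u-v|^2$), not $\mathcal O(|\eta|^2)$, but the error is still $o(1)\cdot|\xi|$ after integration, so the argument is unaffected.
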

\begin{proof}
The case $t=0$ follows from \eqref{CAC}. Now let $0<t\leq t_0$ with $t_0$ to be determined, and let us fix some parameters $\beta,\gamma,\delta>0$ with $\tfrac{1}{4}=\gamma<\beta<\tfrac{1}{2}$ and
$$\delta=1-\left(1-\left(\frac{\gamma}{\beta}\right)^2\right)^2<1.$$
In particular, we can take $t_0$ so that
\begin{align}
|\zcurve(t,s+\xi)-\zcurve(t,s)|
&\geq|\zcurve_0(s+\xi)-\zcurve_0(s)|-t|\zcurve^{(1}(t,s+\xi)-\zcurve^{(1}(t,s)|
\geq 2\beta\CA{\zcurve_0}^{-1}|\xi|,\label{geometriclemma:2}\\
|\map_{\mu}(t,s+\xi)-\map_{\mu}(t,s)|
&\geq|\zcurve(t,s+\xi)-\zcurve(t,s)|-t|\vectW(s+\xi)-\vectW(s)|
\geq\beta\CA{\zcurve_0}^{-1}|\xi|.\label{geometriclemma:3}
\end{align}
We split the proof in two cases, depending on some $r>0$ to be determined.\\
\indent\textbf{Case $|\xi|\leq r$}: By writing
\begin{equation}\label{goodbad}
\map_{\mu}(t,s+\xi)-\map_{\lambda}(t,s)
=\underbrace{\map_{\mu}(t,s+\xi)-\map_{\mu}(t,s)}_{\equiv I}+(\mu-\lambda)\vectV(t,s),
\end{equation}
we split
\begin{equation}\label{geometriclemma:1}
|\map_{\mu}(t,s+\xi)-\map_{\lambda}(t,s)|^2
=|I|^2
+(\mu-\lambda)^2|\vectV(t,s)|^2
+2(\mu-\lambda)I\cdot\vectV(t,s).
\end{equation}
Let us analyse the third term. Since
$$
\left|\frac{\partial_s\map_{\mu}^\perp}{|\partial_s\map_{\mu}|}\cdot\frac{\vectV}{|\vectV|}\right|
=\left|\frac{\partial_s\map_{\mu}}{|\partial_s\map_{\mu}|}\cdot\partial_s\zcurve_{0}\right|
\geq 1-\mathcal{O}(t),
$$
and
$$
\left|\frac{I}{|I|}-\frac{\partial_s\map_{\mu}(t,s)}{|\partial_s\map_{\mu}(t,s)|}\right|
\leq \frac{2|\triangle_{\xi}^2\map_{\mu}(t,s)|}{|\partial_s\map_{\mu}(t,s)|}|\xi|
\leq \frac{2\snorma{\partial_s\map_{\mu}}{\alpha}}{|\partial_s\map_{\mu}(t,s)|}|\xi|^\alpha,
$$
we have
$$
\left|\frac{I\cdot\vectV^\perp}{|I||\vectV|}\right|
\geq\left|\frac{\partial_s\map_{\mu}^\perp}{|\partial_s\map_{\mu}|}\cdot\frac{\vectV}{|\vectV|}\right|-\left|\frac{I}{|I|}-\frac{\partial_s\map_{\mu}}{|\partial_s\map_{\mu}|}\right|
\geq 1-\mathcal{O}(t+|\xi|^\alpha).
$$
Thus, we can take $t_0,r$ so that
$|I\cdot\vectV^\perp|^2
\geq\delta|I|^2|\vectV|^2$,
and so
$$|I\cdot\vectV|^2
=|I|^2|\vectV|^2-|I\cdot\vectV^\perp|^2
\leq(1-\delta)|I|^2|\vectV|^2,$$
from which we get
$$2|\mu-\lambda||I\cdot\vectV|\leq\sqrt{1-\delta}(|I|^2+(\mu-\lambda)^2|\vectV|^2).$$
Hence, by applying this and \eqref{geometriclemma:3} into \eqref{geometriclemma:1}, we deduce
\begin{align*}
|\map_{\mu}(t,s+\xi)-\map_{\lambda}(t,s)|
&\geq\sqrt{1-\sqrt{1-\delta}}
\sqrt{|I|^2+(\mu-\lambda)^2|\vectV(t,s)|^2}\\
&\geq \underbrace{\beta\sqrt{1-\sqrt{1-\delta}}}_{=\gamma}
\sqrt{\CA{\zcurve_0}^{-2}|\xi|^2+(\mu-\lambda)^2|\vectV(t,s)|^2}.
\end{align*}
\indent\textbf{Case $|\xi|> r$}: Since \eqref{geometriclemma:2} yields
$$|\map_{\mu}(t,s+\xi)-\map_{\lambda}(t,s)|
\geq|\zcurve(t,s+\xi)-\zcurve(t,s)|-2t\norma{c}{\infty}
\geq 2(\beta\CA{\zcurve_0}^{-1}|\xi|-t\norma{c}{\infty}),$$
and
$$\CA{\zcurve_0}^{-2}|\xi|^2+t^2c(s)^2(\mu-\lambda)^2
\leq\CA{\zcurve_0}^{-2}|\xi|^2+4 t^2\norma{c}{\infty}^2,$$
it is enough to guarantee that
$$ 4(\beta\CA{\zcurve_0}^{-1}|\xi|-t\norma{c}{\infty})^2\geq
\gamma^2(\CA{\zcurve_0}^{-2}|\xi|^2+4t^2\norma{c}{\infty}^2).$$
This holds if and only if
$$(4\beta^2-\gamma^2)\CA{\zcurve_0}^{-2}|\xi|^2
\geq 4t\norma{c}{\infty}(2\beta\CA{\zcurve_0}^{-1}|\xi|
-t(1-\gamma^2)\norma{c}{\infty}),$$
which is satisfied by taking $t_0$ small enough.
\end{proof}

\subsection{Alternative expressions for $\bar{\velocity}$ and $\BRO_{\lambda}$} 
Let us abbreviate
$$
\zeta_{\lambda}:=\frac{\varpi_{\lambda}}{\partial_s\map_{\lambda}},
$$
and also $\zeta_0\equiv\zeta_{\lambda}^{(0)}=\frac{\varpi_0}{\partial_s\zcurve_0}$.
Given $x\notin\Gamma(t)$, observe that (recall Def.~\ref{defi:circulation})
$$\bar{\velocity}(t,x)^*=\frac{1}{2}\sum_{\mu=\pm 1}\left(\frac{1}{2\pi i}\int_{\T}\frac{\varpi_{\mu}(t,s')-w_{\mu}(t,x)\partial_s\map_{\mu}(t,s')}{x-\map_{\mu}(t,s')}\dif s'-w_{\mu}(t,x)\Ind_{\map_{\mu}(t)}(x)\right),$$
for any $w_{\mu}\in\C$. Setting $w_\mu=\zeta_{\mu}(t,s)$ we therefore deduce
\begin{equation}\label{v:Ind}
\begin{split}	
\bar{\velocity}(t,x)^*&=\frac{1}{2}\sum_{\mu=\pm 1}\bigg(\frac{\zeta_{\mu}(t,s)}{2\pi i}\int_{\T}\frac{\partial_s\map_{\mu}(t,s)-\partial_s\map_{\mu}(t,s')}{x-\map_{\mu}(t,s')}\dif s'\\
&-\frac{1}{2\pi i}\int_{\T}\frac{\varpi_{\mu}(t,s)-\varpi_{\mu}(t,s')}{x-\map_{\mu}(t,s')}\dif s'
-\zeta_{\mu}(t,s)\Ind_{\map_{\mu}(t)}(x)\bigg)
\end{split}
\end{equation}
valid for any $t\geq 0$, $x\notin\Gamma(t)$ and $s\in\T$.

In particular, for $t=0$ we have
$$\velocity_0(x)^*
=\frac{\zeta_0(s)}{2\pi i}\int_{\T}\frac{\partial_s\zcurve_0(s)-\partial_s\zcurve_0(s')}{x-\zcurve_0(s')}\dif s'
-\frac{1}{2\pi i}\int_{\T}\frac{\varpi_0(s)-\varpi_0(s')}{x-\zcurve_0(s')}\dif s'
-\zeta_0(s)\Ind_{\zcurve_0}(x).$$
By considering nontangential limits $x=x_{\pm\varepsilon}=\zcurve_0(s)\pm \varepsilon\partial_s\zcurve_0^\perp(s)$ as $\varepsilon\downarrow 0$ we deduce that $\velocity_0$ is bounded in a neighbourhood of $\Gamma_0$, and consequently $\velocity_0=\mathcal{O}((1+|x|)^{-1})$ in terms of $\norma{\zcurve_0}{1,\alpha}$, $\norma{\varpi_0}{0,\alpha}$ and $\CA{\zcurve_{0}}$. Furthermore,
$$\velocity_0^{\pm}
=\BRO_0\mp\tfrac{1}{2}\zeta_0^*,$$
where $\BRO_0\equiv\BRO(\zcurve_0,\varpi_0)$ is
\begin{align}
\BRO_0(s)^*
:=&\,\frac{1}{2\pi i}\PV\!\!\int_{\T}\frac{\varpi_0(s')}{\zcurve_0(s)-\zcurve_0(s')}\dif s'\nonumber\\
=&\,\frac{\zeta_0(s)}{2\pi i}\int_{\T}\frac{\partial_s\zcurve_0(s')-\partial_s\zcurve_0(s)}{\zcurve_0(s')-\zcurve_0(s)}\dif s'
-\frac{1}{2\pi i}\int_{\T}\frac{\varpi_0(s')-\varpi_0(s)}{\zcurve_0(s')-\zcurve_0(s)}\dif s'
-\frac{1}{2}\zeta_0(s).\label{BRO0}
\end{align}

Similarly, for $t>0$ we consider $x=x_{\lambda,\pm\varepsilon}=\map_{\lambda}(t,s)\pm\varepsilon\partial_s\map_{\lambda}^\perp(t,s)$ to deduce that $\bar{\velocity}(t)$ is bounded in a neighbourhood of $\Gamma(t)$, and consequently $\bar{\velocity}(t)=\mathcal{O}((1+|x|)^{-1})$ in terms of $\norma{\zcurve(t)}{1,\alpha}$, $\norma{\varpi(t)}{0,\alpha}$ and $\CA{\zcurve_{0}}$. Furthermore, 
\begin{equation}\label{e:vpm}
\bar{\velocity}_{\lambda}^{\pm}
=\BRO_{\lambda}\mp\tfrac{1}{4}\zeta_{\lambda}^{*},
\end{equation}
with
\begin{align*}
&\BRO_{\lambda}(t,s)^*
=\frac{1}{2}\sum_{\mu=\pm 1}\frac{1}{2\pi i}\PV\!\int_{\T}\frac{\varpi_{\mu}(t,s')}{\map_{\lambda}(t,s)-\map_{\mu}(t,s')}\dif s'\\
&=\frac{1}{2}\sum_{\mu=\pm 1}\left(\frac{\zeta_{\mu}(t,s)}{2\pi i}\int_{\T}\frac{\partial_s\map_{\mu}(t,s')-\partial_s\map_{\mu}(t,s)}{\map_{\mu}(t,s')-\map_{\lambda}(t,s)}\dif s'
-\frac{1}{2\pi i}\int_{\T}\frac{\varpi_{\mu}(t,s')-\varpi_{\mu}(t,s)}{\map_{\mu}(t,s')-\map_{\lambda}(t,s)}\dif s'
-\theta_{\lambda,\mu}\zeta_{\mu}(t,s)\right),
\end{align*}
where, for $\lambda,\mu\in\{-1,1\}$, we have
$$
\theta_{\lambda,\mu}
:=\frac{1+\sgn(\lambda-\mu)}{2}=\begin{cases}\frac12&\lambda=\mu,\\ 1& (\lambda,\mu)=(1,-1),\\ 0&(\lambda,\mu)=(-1,1).\end{cases}
$$

\bigskip

Analogously to \cite[\S3]{Piecewise}, we consider the  weighted Hilbert transform on $C^{0,\alpha}(\T)$: 
\begin{equation}\label{e:weightedH}
\TT{\Phi}f(s):=\frac{1}{2\pi i}\int_{\T}\triangle_\xi f(s)\Phi(s,\xi)\dif\xi,
\end{equation}
for the space of weights
$L^{\infty}(\T;C^{k,\alpha}(\T))$
with the norm
$$
\Norma{\Phi}{k,\alpha}:=\underset{\xi\in\T}{\mathrm{ess\,sup}}\norma{\Phi(\cdot,\xi)}{k,\alpha}.
$$
Recall that $\T=\R/\ell\Z$. Then, w.l.o.g.~in the following we will use the parameter range $\xi\in [-\ell/2,\ell/2]$ in \eqref{e:weightedH}. For ease of notation let us assume that $\ell=1$. We start with the following simple result for boundedness of $\TT{\Phi}$ on H\"older spaces, following \cite{Piecewise}: 

\begin{thm}\label{thm:generalkernel} For all $0\leq \alpha'<\alpha $ and $k\in\N_0$ there exists $C(k,\alpha)>0$ such that
$$\norma{\TT{\Phi}f}{k,\alpha'}\leq \frac{C}{\alpha-\alpha'}\Norma{\Phi}{k,\alpha'}\norma{f}{k,\alpha},$$
for every $\Phi\in L^{\infty}(\T;C^{k,\alpha'}(\T))$ and $f\in C^{k,\alpha}(\T)$.
\end{thm}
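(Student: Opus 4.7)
The plan is to proceed by induction on $k$, with the essential analytic content concentrated in the base case $k=0$. For $k=0$ I would bound $\norma{\TT{\Phi}f}{\infty}$ and $\snorma{\TT{\Phi}f}{\alpha'}$ separately. The $L^\infty$-bound follows directly from the pointwise inequality $|\triangle_\xi f(s)|\leq\snorma{f}{\alpha}|\xi|^{\alpha-1}$, which is integrable near $\xi=0$ since $\alpha>0$, giving a constant of order $\tfrac{1}{\alpha}\leq\tfrac{1}{\alpha-\alpha'}$ multiplied by $\Norma{\Phi}{0,\alpha'}\snorma{f}{\alpha}$.

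For the H\"older seminorm I would decompose
\begin{align*}
\TT{\Phi}f(s+h)-\TT{\Phi}f(s)
&= \frac{1}{2\pi i}\int_{\T}(\triangle_\xi f(s+h)-\triangle_\xi f(s))\Phi(s+h,\xi)\dif\xi \\
&\quad +\frac{1}{2\pi i}\int_{\T}\triangle_\xi f(s)(\Phi(s+h,\xi)-\Phi(s,\xi))\dif\xi.
\end{align*}
The second summand is routine: combining $|\triangle_\xi f(s)|\leq\snorma{f}{\alpha}|\xi|^{\alpha-1}$ with $|\Phi(s+h,\xi)-\Phi(s,\xi)|\leq \Norma{\Phi}{0,\alpha'}|h|^{\alpha'}$ gives a contribution of order $\tfrac{1}{\alpha}\Norma{\Phi}{0,\alpha'}\snorma{f}{\alpha}|h|^{\alpha'}$. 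For the first summand the ``second difference''
$$
\triangle_\xi f(s+h)-\triangle_\xi f(s)
= \frac{f(s+h+\xi)-f(s+h)-f(s+\xi)+f(s)}{\xi}
$$
admits two elementary bounds, namely $2\snorma{f}{\alpha}|h|^{\alpha}/|\xi|$ (obtained by pairing the terms via the $h$-shift) and $2\snorma{f}{\alpha}|\xi|^{\alpha-1}$ (by pairing them via the $\xi$-shift). Interpolating via $\min(A,B)\leq A^{1-\theta}B^{\theta}$ with the choice $\theta=(\alpha-\alpha')/\alpha\in[0,1]$ yields
$$
|\triangle_\xi f(s+h)-\triangle_\xi f(s)|\leq C\,\snorma{f}{\alpha}|h|^{\alpha'}|\xi|^{\alpha-\alpha'-1},
$$
and the resulting singular integral $\int_{\T}|\xi|^{\alpha-\alpha'-1}\dif\xi$ produces precisely the factor $\tfrac{1}{\alpha-\alpha'}$ required by the theorem. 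This completes the case $k=0$.

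For the inductive step the key observation is the commutation $\partial_s\triangle_\xi f=\triangle_\xi\partial_sf$, which, combined with differentiation under the integral sign, gives the Leibniz-type identity
$$
\partial_s^k(\TT{\Phi}f)=\sum_{j=0}^{k}\binom{k}{j}\TT{\partial_s^{k-j}\Phi}(\partial_s^j f).
$$
Applying the $k=0$ estimate to each summand on the right, and using that $\Norma{\partial_s^{k-j}\Phi}{0,\alpha'}\lesssim\Norma{\Phi}{k,\alpha'}$ (by interpolating intermediate H\"older seminorms against the top-order ones on the compact torus) together with $\norma{\partial_s^j f}{0,\alpha}\leq\norma{f}{k,\alpha}$ for $j\leq k$, yields the desired bound on $\snorma{\partial_s^k\TT{\Phi}f}{\alpha'}$; the intermediate terms $\norma{\partial_s^j\TT{\Phi}f}{\infty}$, $j<k$, are controlled by the same reasoning applied with $\alpha'=0$.

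The only genuinely non-routine point, and hence the main obstacle, is producing the sharp blow-up $(\alpha-\alpha')^{-1}$ as $\alpha'\uparrow\alpha$, rather than the cruder $\alpha^{-1}$ or $(\alpha')^{-1}$. This is what will later permit one to take $\alpha'$ arbitrarily close to $\alpha$ at the cost of a controlled constant, and it is achieved precisely by the interpolation between the two natural bounds on the second-order difference described above.
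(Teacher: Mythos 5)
Your proof is correct and follows essentially the same route as the paper: the same splitting of $\TT{\Phi}f(s+h)-\TT{\Phi}f(s)$ into a $\Phi$-difference term and a second-difference term, and the same interpolation (the paper uses the exponent $\beta=\alpha'/\alpha$, i.e.\ your $1-\theta$) to extract $|h|^{\alpha'}|\xi|^{\alpha-\alpha'-1}$ and hence the factor $(\alpha-\alpha')^{-1}$. The inductive step via the Leibniz rule is exactly what the paper invokes for $k\geq 1$.
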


\begin{proof}
Let us start with $k=0$. On the one hand,
$$
\norma{\TT{\Phi}f}{\infty}\leq\frac{2^{-\alpha}}{\pi\alpha}\underset{\xi\in\T}{\mathrm{ess\,sup}}\norma{\Phi(\cdot,\xi)}{\infty}\snorma{f}{\alpha}.
$$
On the other hand, we split
\begin{align*}
\TT{\Phi}f(s)-\TT{\Phi}f(s')&=
\frac{1}{2\pi i}\int_{\T}\triangle_\xi f(s)\left(\Phi(s,\xi)-\Phi(s',\xi)\right)\dif\xi &=:I\\
&+\frac{1}{2\pi i}\int_{\T}\left(\triangle_\xi f(s)-\triangle_\xi f(s')\right)\Phi(s',\xi)\dif\xi &=:J.
\end{align*}
For $I$, simply
$$
|I|\leq\frac{2^{-\alpha}}{\pi\alpha}\underset{\xi\in\T}{\mathrm{ess\,sup}}\snorma{\Phi(\cdot,\xi)}{\alpha'}\snorma{f}{\alpha}|s-s'|^{\alpha'}.
$$
For $J$, consider $\beta=\frac{\alpha'}{\alpha}\in(0,1)$. By applying $|a+b|^\beta\leq 2^\beta(|a|^\beta+|b|^\beta)$ we obtain
\begin{align*}
&\left|\triangle_\xi f(s)-\triangle_\xi f(s')\right|
=\left|\triangle_\xi f(s)-\triangle_\xi f(s')\right|^{1-\beta}
\left|\triangle_\xi f(s)-\triangle_\xi f(s')\right|^\beta\\
&\leq 
2^{1-\beta}\left(\left|\triangle_\xi f(s)\right|^{1-\beta}+\left|\triangle_\xi f(s')\right|^{1-\beta}\right)2^\beta|\xi|^{-\beta}
\left(\left|f(s)-f(s')\right|^\beta
+\left|f(s+\xi)-f(s'+\xi)\right|^\beta\right)\\
&\leq 8\snorma{f}{\alpha}|\xi|^{(1-\beta)\alpha-1}|s-s'|^{\beta\alpha},
\end{align*}
and consequently
$$
|J|\leq\frac{2^{3-(1-\beta)\alpha}}{\pi(1-\beta)\alpha}
\underset{\xi\in\T}{\mathrm{ess\,sup}}\norma{\Phi(\cdot,\xi)}{\infty}
\snorma{f}{\alpha}|s-s'|^{\beta\alpha}.
$$
For $k\geq 1$ the result follows by applying the Leibniz rule.
\end{proof}
\begin{Rem} The operator $\TT{\Phi}$ is bounded for $\alpha'=\alpha$ by imposing additional conditions on $\Phi$ (see \cite[\S3]{Piecewise}) but we have presented this version for simplicity.
\end{Rem}

In particular, we will focus on the following kernels
\begin{align*}
\Phi_{\lambda,\mu}(t,s,\xi)
&:=\frac{\xi}{\map_{\mu}(t,s+\xi)-\map_{\lambda}(t,s)},\\
\Phi_{\lambda,\mu,n_0}(t,s,\xi)
&:=\frac{\xi}{(\map_{\mu}(t,s+\xi)-\map_{\lambda}(t,s))^{n_0-1)}},
 \end{align*}
with $\Phi_0\equiv\Phi_{\lambda,\mu}^{(0)}$ and the remainder
\begin{align*}
\Theta_{\lambda,\mu,n_0}(t,s,\xi)
&:=t^{-n_0}(\Phi_{\lambda,\mu}-\Phi_{\lambda,\mu,n_0})(t,s,\xi)\\
&=-\frac{\xi(\map_{\mu}(t,s+\xi)-\map_{\lambda}(t,s))^{(n_0}}{(\map_{\mu}(t,s+\xi)-\map_{\lambda}(t,s))(\map_{\mu}(t,s+\xi)-\map_{\lambda}(t,s))^{n_0-1)}}.
\end{align*}
With them we can express the Birkhoff-Rott operators $\BRO_{\lambda}$ as follows. At time $t=0$ we have
\begin{equation}
\BRO_0^*=\zeta_0\TT{\Phi_0}(\partial_s\zcurve_0)-\TT{\Phi_0}(\varpi_0)-\tfrac{1}{2}\zeta_0,
\end{equation}
and for $t>0$ we have
\begin{equation}\label{e:BROexpand1}
\BRO_{\lambda}^{*}
=\frac{1}{2}\sum_{\mu=\pm 1}(\zeta_{\mu}\TT{\Phi_{\lambda,\mu}}(\partial_s\map_{\mu})-\TT{\Phi_{\lambda,\mu}}(\varpi_{\mu})-\theta_{\lambda,\mu}\zeta_{\mu}).
\end{equation}
Notice that Lemma \ref{geometriclemma} implies
\begin{equation}\label{eq:Phi0}
\norma{\Phi_{\lambda,\mu}}{\infty}
\leq\frac{4|\xi|}{\sqrt{\CA{\zcurve_{0}}^{-2}|\xi|^2+t^2c(s)^2(\mu-\lambda)^2}}
\leq 4\CA{\zcurve_0},
\end{equation}
and, recalling \eqref{goodbad}, also
\begin{align}
|\Phi_{\lambda,\mu}(t,s,\xi)-\Phi_{\lambda,\mu}(t,s',\xi)|
&=\left|\frac{\xi(\xi\triangle_\xi(\map_{\mu}(t,s)-\map_{\mu}(t,s'))
+(\mu-\lambda)(\vectV(t,s)-\vectV(t,s')))}{(\map_{\mu}(s+\xi,t)-\map_{\lambda}(s,t))(\map_{\mu}(s'+\xi,t)-\map_{\lambda}(s',t))}\right|\nonumber\\
&\leq 4^2\CA{\zcurve_0}(\CA{\zcurve_0}\snorma{\partial_s\map_{\mu}}{\alpha}+|\mu-\lambda|c(s)^{-1}\snorma{\vectW}{\alpha})|s-s'|^{\alpha}.\label{eq:Psi1}
\end{align}
Therefore, $\Norma{\Phi_{\lambda,\mu}}{0,\alpha}$ is bounded. Similar estimates apply to $\Norma{\Phi_{\lambda,\mu,n_0}}{0,\alpha}$ and $\Norma{\Theta_{\lambda,\mu,n_0}}{0,\alpha}$.
In light of Theorem \ref{thm:generalkernel} this proves:

\begin{cor}\label{Phi:bdd}
For any fixed $0\leq\alpha'<\alpha<1$ and any $\lambda,\mu\in\{-1,1\}$, the operators $\TT{\Phi_{\lambda,\mu}}$, $\TT{\Phi_{\lambda,\mu,n_0}}$, $\TT{\Theta_{\lambda,\mu,n_0}}$ are bounded operators from $C^{0,\alpha}$ to $C^{0,\alpha'}$, with operator norm bounded in terms of $\CA{\zcurve_{0}}$, $\norma{1/c}{\infty}$, the $C^{0,\alpha}$-norm of $\varpi$ and the $C^{1,\alpha}$-norm of $\zcurve$ and $\vectW$. 
\end{cor}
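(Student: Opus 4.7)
The plan is to invoke Theorem \ref{thm:generalkernel} with $k=0$: for each of the kernels $K\in\{\Phi_{\lambda,\mu},\Phi_{\lambda,\mu,n_0},\Theta_{\lambda,\mu,n_0}\}$, the theorem reduces the boundedness of $\TT{K}:C^{0,\alpha}\to C^{0,\alpha'}$ to finiteness of $\Norma{K}{0,\alpha}$ (which bounds $\Norma{K}{0,\alpha'}$ since $\T$ is compact), quantified in terms of the listed ingredients. So the whole content is a uniform $L^\infty(\T;C^{0,\alpha}(\T))$ estimate on each of the three weights.

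For $\Phi_{\lambda,\mu}$ no further work is needed: the sup bound \eqref{eq:Phi0} and the H\"older-in-$s$ bound \eqref{eq:Psi1} already established yield
\[
\Norma{\Phi_{\lambda,\mu}}{0,\alpha}\lesssim \CA{\zcurve_0}+\CA{\zcurve_0}\bigl(\CA{\zcurve_0}\snorma{\partial_s\map_\mu}{\alpha}+|\mu-\lambda|\norma{1/c}{\infty}\snorma{\vectW}{\alpha}\bigr),
\]
with right-hand side controlled by $\CA{\zcurve_0}$, $\norma{1/c}{\infty}$ and the $C^{1,\alpha}$ norms of $\zcurve$ and $\vectW$.

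The cases $\Phi_{\lambda,\mu,n_0}$ and $\Theta_{\lambda,\mu,n_0}$ both hinge on a single preliminary step: a uniform lower bound on the truncated denominator $P_{\lambda,\mu}(t,s,\xi):=(\map_\mu(t,s+\xi)-\map_\lambda(t,s))^{n_0-1)}$ of the same shape as Lemma \ref{geometriclemma}. Since
\[
\map_\mu(t,s+\xi)-\map_\lambda(t,s)-P_{\lambda,\mu}(t,s,\xi)=t^{n_0}(\map_\mu-\map_\lambda)^{(n_0}(t,s,\xi)
\]
is $O(t^{n_0})$ uniformly in $(s,\xi)$ with coefficients controlled by finitely many $(s,t)$-derivatives of $\zcurve$ and $\vectW$, a further shrinking of $t_0$ combined with Lemma \ref{geometriclemma} yields
\[
|P_{\lambda,\mu}(t,s,\xi)|\gtrsim \sqrt{\CA{\zcurve_0}^{-2}|\xi|^2+t^2c(s)^2(\mu-\lambda)^2}.
\]
Once this is available, the estimates \eqref{eq:Phi0}--\eqref{eq:Psi1} transcribe essentially verbatim to $\Phi_{\lambda,\mu,n_0}$, replacing the full denominator by its $(n_0-1)$-th Taylor polynomial and using that $P_{\lambda,\mu}(t,\cdot,\xi)-P_{\lambda,\mu}(t,\cdot',\xi)$ is controlled by the $C^{1,\alpha}$ norms of $\map^{n_0-1)}$. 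For $\Theta_{\lambda,\mu,n_0}$, the numerator $\xi(\map_\mu-\map_\lambda)^{(n_0}$ is $O(|\xi|)$ with $C^{0,\alpha}$-in-$s$ norm controlled by the same quantities, and both denominator factors are bounded below on the scale of $\sqrt{|\xi|^2+t^2c^2(\mu-\lambda)^2}$, so the kernel is uniformly $O(1)$ with a H\"older-in-$s$ estimate obtained exactly as in \eqref{eq:Psi1}. The principal difficulty is precisely the lower bound on $P_{\lambda,\mu}$; once secured, the remainder is a routine bookkeeping exercise.
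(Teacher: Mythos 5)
Your proposal follows the paper's route exactly: Theorem \ref{thm:generalkernel} with $k=0$ reduces everything to a uniform $L^\infty(\T;C^{0,\alpha}(\T))$ bound on the three weights; for $\Phi_{\lambda,\mu}$ this is precisely \eqref{eq:Phi0}--\eqref{eq:Psi1}, and the paper dismisses $\Phi_{\lambda,\mu,n_0}$ and $\Theta_{\lambda,\mu,n_0}$ with the remark that ``similar estimates apply''. You correctly identify that the only genuine content hidden in that remark is a chord-arc--type lower bound on the truncated denominator $P_{\lambda,\mu}(t,s,\xi)=(\map_{\mu}(t,s+\xi)-\map_{\lambda}(t,s))^{n_0-1)}$.

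One step needs sharpening, because as written it would fail. You justify the lower bound on $P_{\lambda,\mu}$ by saying the perturbation $t^{n_0}(\map_{\mu}(t,s+\xi)-\map_{\lambda}(t,s))^{(n_0}$ is $\mathcal{O}(t^{n_0})$ \emph{uniformly} in $(s,\xi)$ and then shrinking $t_0$. That alone cannot give the conclusion: when $\mu=\lambda$ and $|\xi|\ll t^{n_0}$ the target lower bound from Lemma \ref{geometriclemma} is $\gtrsim\CA{\zcurve_0}^{-1}|\xi|$, so a perturbation that is merely uniformly $\mathcal{O}(t^{n_0})$ would swamp it no matter how small $t_0$ is chosen. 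What saves the argument is the structure of the remainder: since $\vectV=t\vectW$ is linear in $t$ and $n_0\geq 2$, the $\vectV$-part of $\map_{\mu}(t,s+\xi)-\map_{\lambda}(t,s)$ is reproduced exactly by its Taylor polynomial in $t$, so the perturbation equals $t^{n_0}\bigl(\zcurve^{(n_0}(t,s+\xi)-\zcurve^{(n_0}(t,s)\bigr)$, which is $\mathcal{O}(t^{n_0}|\xi|)$ once $\partial_s\zcurve^{(n_0}$ is bounded. With that extra factor of $|\xi|$ the perturbation is absorbed into half of the chord-arc lower bound for $t_0$ small, and the rest of your argument (transcribing \eqref{eq:Phi0}--\eqref{eq:Psi1} to $P_{\lambda,\mu}$, and bounding the two denominator factors of $\Theta_{\lambda,\mu,n_0}$ separately on the scale $\sqrt{|\xi|^2+t^2c^2(\mu-\lambda)^2}$) goes through as you describe.
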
 

Based on this corollary we can consider expansions in $t$ as follows. 
By considering the Taylor decompositions $f=f^{n_0-1)}+t^{n_0}f^{(n_0}$ of $f=\partial_s\map_{\mu},\varpi_{\mu}$, we split
\begin{align*}
\TT{\Phi_{\lambda,\mu}}f&=\TT{\Phi_{\lambda,\mu,n_0}}f^{n_0-1)}
+t^{n_0}(\TT{\Phi_{\lambda,\mu,n_0}}f^{(n_0}+\TT{\Theta_{\lambda,\mu,n_0}}f)\\
&=\TT{\Phi_{\lambda,\mu,n_0}}f^{n_0-1)}
+\mathcal{O}(t^{n_0}),
\end{align*}
where the $\mathcal{O}(t^{n_0})$ is understood in the $C^{0,\alpha'}$-norm. 
Applying this to \eqref{e:BROexpand1} we obtain
for any $0\leq n\leq n_0-1$
\begin{align*}
\BRO_{\lambda}^{*(n)}
&=\frac{1}{2}\sum_{\mu=\pm 1}\bigg(\sum_{n_1+n_2+n_3=n}\zeta_{\lambda}^{(n_3)}(\TT{\Phi_{\lambda,\mu,n_0}}(\partial_s\map_{\mu}^{(n_2)}))^{(n_1)}\\
&-\sum_{n_1+n_2=n}(\TT{\Phi_{\lambda,\mu,n_0}}(\varpi_{\mu}^{(n_2)}))^{(n_1)}-\theta_{\lambda,\mu}\zeta_{\mu}^{(n)}\bigg).
\end{align*}
Therefore, $\BRO_{\lambda}^{(n)}$ depends on
$$
f^{(n_2)},\,(\TT{\Phi_{\lambda,\mu,n_0}}f^{(n_2)})^{(n_1)},\quad \textrm{ for $f=\partial_s\zcurve,\partial_s\vectV,\varpi$ and $0\leq n_1+n_2\leq n$.}
$$
In particular, since 
$
(\TT{\Phi_{\lambda,\mu}}f)^{(0)}
=\TT{\Phi_{0}}f
$
for $f\in C^{0,\alpha}(\T)$ and
$$
\frac{1}{2}\sum_{\mu=\pm 1}\theta_{\lambda,\mu}
=\frac{1}{2}+\frac{\lambda}{4},
$$
 we deduce the following corollary.
\begin{cor}\label{cor:BRO0}
Assume $\partial_s\zcurve,\varpi\in C^1(0,T;C^{\alpha}(\T))$ for some $\alpha>0$. Then, 
$$
\BRO_{\lambda}^{(0)}
=\BRO_0-\tfrac{\lambda}{4}\varpi_0\partial_s\zcurve_0,
$$
with $\BRO_{0}$ given in \eqref{BRO0}. 
\end{cor}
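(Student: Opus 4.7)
The plan is to simply specialize the expansion for $\BRO_{\lambda}^{*(n)}$ just established (with $n_0\geq 1$) to $n=0$ and compare with the defining formula \eqref{BRO0}. Concretely, I would argue as follows.

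First, I would evaluate each ingredient of the expansion at $t=0$. Since $\map_{\mu}^{(0)}=\zcurve_0$ and $\varpi_{\mu}^{(0)}=\varpi_0$ independently of $\mu$, we have
$\zeta_{\mu}^{(0)}=\varpi_0/\partial_s\zcurve_0=:\zeta_0$,
$\partial_s\map_{\mu}^{(0)}=\partial_s\zcurve_0$, and the kernel reduces to
$$
\Phi_{\lambda,\mu,n_0}^{(0)}(s,\xi)
=\frac{\xi}{\zcurve_0(s+\xi)-\zcurve_0(s)}
=\Phi_0(s,\xi),
$$
which is likewise independent of $\lambda,\mu$. For the operator itself, Corollary \ref{Phi:bdd} together with the identity $(\TT{\Phi_{\lambda,\mu,n_0}}f)^{(0)}=\TT{\Phi_0}f^{(0)}$ (valid for $f\in C^1_tC^{0,\alpha}$) lets me extract the time-zero value term by term.

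Substituting into the $n=0$ case of the expansion, the first two sums (over $\mu$) become independent of $\mu$ and the factor $\tfrac{1}{2}$ cancels, while the last term involves $\tfrac{1}{2}\sum_{\mu=\pm 1}\theta_{\lambda,\mu}\zeta_0$. A direct case-check from the definition of $\theta_{\lambda,\mu}$ gives
$$
\tfrac{1}{2}\sum_{\mu=\pm 1}\theta_{\lambda,\mu}=\tfrac{1}{2}+\tfrac{\lambda}{4},
$$
so that
$$
\BRO_{\lambda}^{*(0)}
=\zeta_0\TT{\Phi_0}(\partial_s\zcurve_0)-\TT{\Phi_0}(\varpi_0)
-\bigl(\tfrac{1}{2}+\tfrac{\lambda}{4}\bigr)\zeta_0.
$$

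Next, I would rewrite \eqref{BRO0} in terms of $\TT{\Phi_0}$: after the substitution $s'=s+\xi$ one sees that
$
\BRO_0^*=\zeta_0\TT{\Phi_0}(\partial_s\zcurve_0)-\TT{\Phi_0}(\varpi_0)-\tfrac{1}{2}\zeta_0,
$
so that
$$
\BRO_{\lambda}^{*(0)}=\BRO_0^*-\tfrac{\lambda}{4}\zeta_0.
$$
Taking complex conjugates and using that $\varpi_0$ is real together with the arc-length normalization $|\partial_s\zcurve_0|=1$, which implies $\zeta_0^{*}=\varpi_0/\partial_s\zcurve_0^{*}=\varpi_0\partial_s\zcurve_0$, yields $\BRO_{\lambda}^{(0)}=\BRO_0-\tfrac{\lambda}{4}\varpi_0\partial_s\zcurve_0$, as claimed. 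There is no real obstacle here; the only mild bookkeeping point is to keep track of complex conjugates and of the asymmetric values of $\theta_{\lambda,\mu}$ at the diagonal versus off-diagonal $(\lambda,\mu)$ pairs, which is what produces the $\lambda$-dependent jump term.
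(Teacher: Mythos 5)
Your proposal is correct and follows essentially the same route as the paper: the text preceding the corollary deduces it exactly by setting $n=0$ in the expansion of $\BRO_{\lambda}^{*(n)}$, using $(\TT{\Phi_{\lambda,\mu}}f)^{(0)}=\TT{\Phi_0}f$ and the computation $\tfrac{1}{2}\sum_{\mu=\pm1}\theta_{\lambda,\mu}=\tfrac{1}{2}+\tfrac{\lambda}{4}$, and then comparing with the stated formula for $\BRO_0^*$. Your conjugation step, using that $\varpi_0$ is real and $|\partial_s\zcurve_0|=1$ so that $\zeta_0^*=\varpi_0\partial_s\zcurve_0$, is also the correct bookkeeping.
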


\subsection{The operator $\TT{\Phi_{\lambda,\mu}}$}
In this section we analyse $\TT{\Phi_{\lambda,\mu,n_0}}$. For ease of notation let us abbreviate from now on $\Phi_{\lambda,\mu,n_0}$ by $\Phi_{\lambda,\mu}$ and $\map_{\lambda}^{n_0-1)}$ by $\map_{\lambda}$. For any $0\leq n_1\leq n_0-1$,
Fa\`{a} di Bruno's formula yields
\begin{equation}\label{Faa}
\partial_t^{n_1}\Phi_{\lambda,\mu}
=\sum_{b\in\pi_{n_1}}F_b\Phi_{\lambda,\mu}^{|b|+1}\xi^{-|b|}
\prod_{m=1}^{n_1}(\partial_t^m(\map_{\mu}(t,s+\xi)-\map_{\lambda}(t,s)))^{b_m},
\end{equation}
with $\pi_{n_1}:=\{b\in\N_0^{n_1}\,:\,b_1+2b_2+\cdots+ {n_1}b_{n_1}={n_1}\}$, $|b|:=b_1+\cdots b_{n_1}$ and $F_b$ the combinatorial constant
$$F_b:=(-1)^{|b|}\frac{|b|!{n_1}!}{b_1!1!^{b_1}\cdots b_{n_1}!{n_1}!^{b_{n_1}}}.$$
Using \eqref{goodbad} and the binomial theorem we get
\begin{align*}
\prod_{m=1}^{n_1}(\partial_t^m(\map_{\mu}(t,s+\xi)-\map_{\lambda}(t,s)))^{b_m}
&=\prod_{m=1}^{n_1}\sum_{a_m=0}^{b_m}\binom{b_m}{a_m}((\mu-\lambda)\partial_t^m\vectV)^{a_m}(\xi\partial_t^m\triangle_{\xi}\map_{\mu})^{b_m-a_m}\\
&=\sum_{a\leq b}(\mu-\lambda)^{|a|}\xi^{|b-a|}\prod_{m=1}^{n_1}\binom{b_m}{a_m}(\partial_t^m\vectV)^{a_m}(\partial_t^m\triangle_{\xi}\map_{\mu})^{b_m-a_m}.
\end{align*}
Thus, by using \eqref{eq:Phi0}, the dominated convergence theorem yields
\begin{equation}\label{Faa:1}
\partial_t^{n_1}\TT{\Phi_{\lambda,\mu}}f
=\TT{\partial_t^{n_1}\Phi_{\lambda,\mu}}f
=\sum_{b\in\pi_{n_1}}F_b\sum_{a\leq b}(\mu-\lambda)^{|a|}V_{a,b}I_{a,b},
\quad t>0,
\end{equation}
where
$$
V_{a,b}:=\prod_{m=1}^{n_1}\binom{b_m}{a_m}(\partial_t^m\vectV)^{a_m},
\quad\quad
I_{a,b}:=\int_{\T}\triangle_\xi f\Phi_{\lambda,\mu}^{|b|+1}\xi^{-|a|}\prod_{m=1}^{n_1}(\partial_t^m\triangle_\xi\map_{\mu})^{b_m-a_m}\dif\xi.
$$
Although for our $\vectV(t,s)=t\vectW(s)$ simply $V_{a,b}$ equals to $\binom{b_1}{a_1}(\vectW)^{a_1}$ for $a=(a_1,0,\ldots,0)$ and vanishes otherwise, it requires the same effort to keep it general here, and thus may be helpful if different growth rates appear in other problems.

\smallskip

Let us analyse the cases $\mu=\lambda$ and $\mu\neq\lambda$ separately.
As we shall see, while for $\mu=\lambda$ the expression \eqref{Faa:1} holds at $t=0$ too, for $\mu\neq\lambda$ this is only true for $n_1=0$.

\subsubsection{Case $\mu=\lambda$}

\begin{prop}\label{prop:goodkernel} 
For all $0\leq\alpha'<\alpha$ and $f\in C^{k,\alpha}(\T)$ we have
\begin{equation}\label{goodkernel2}
\norma{\TT{\Phi_{\lambda,\lambda}}f}{C_t^{n_1}C_s^{k,\alpha'}}
\leq\frac{C}{\alpha-\alpha'}\norma{f}{k,\alpha},
\end{equation}
where $C$ depends on $\CA{\zcurve_{0}}$ and the $C^{k+1,\alpha}$-norm of $\zcurve^{(n)}$, $\vectV^{(n)}$ for $0\leq n\leq n_0-1$. In particular,
\begin{equation}\label{goodkernel3}
\norma{(\TT{\Phi_{\lambda,\lambda}}f)^{(n_1)}}{k,\alpha'}\leq\frac{C_0}{\alpha-\alpha'}\norma{f}{k,\alpha},
\end{equation}
where $C_0$ depends on $\CA{\zcurve_{0}}$ and the $C^{k+1,\alpha}$-norm of $\zcurve^{(n)}$, $\vectV^{(n)}$ for $0\leq n\leq n_1$. Moreover,
\begin{equation}\label{goodkernel1}
(\TT{\Phi_{\lambda,\lambda}}f)^{(n_1)}=\TT{\Phi_{\lambda,\lambda}^{(n_1)}}f.
\end{equation}
\end{prop}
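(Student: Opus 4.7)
The plan is to exploit that for $\mu=\lambda$ the factor $(\mu-\lambda)^{|a|}$ appearing in \eqref{Faa:1} vanishes unless $|a|=0$, collapsing the inner sum to a single term and removing every contribution from the derivatives of $\vectV$. Specialising \eqref{Faa:1} to $\mu=\lambda$ yields, for $t>0$,
\begin{equation*}
\partial_t^{n_1}\TT{\Phi_{\lambda,\lambda}}f=\sum_{b\in\pi_{n_1}}F_b\,\TT{\Psi_b}f,\qquad
\Psi_b(t,s,\xi):=\Phi_{\lambda,\lambda}(t,s,\xi)^{|b|+1}\prod_{m=1}^{n_1}(\partial_t^m\triangle_\xi\map_{\lambda}(t,s))^{b_m},
\end{equation*}
which is exactly $\TT{\partial_t^{n_1}\Phi_{\lambda,\lambda}}f$ by \eqref{Faa}, since the Fa\`{a}~di~Bruno expansion of $\partial_t^{n_1}\Phi_{\lambda,\lambda}$ reduces precisely to $\sum_b F_b\Psi_b$.

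The main step is then to show that each weight $\Psi_b$ is uniformly bounded in $L^\infty(\T;C^{k,\alpha}(\T))$ for $t\in[0,t_0]$. Since $\map_{\lambda}$ stands for the Taylor polynomial $\map_{\lambda}^{n_0-1)}$, which is polynomial in $t$ with coefficients in $C^{k+1,\alpha}(\T;\R^2)$, each factor $\partial_t^m\triangle_\xi\map_{\lambda}=\int_0^1(\partial_s\partial_t^m\map_{\lambda})(t,s+r\xi)\dif r$ has $C^{k,\alpha}_s$-norm bounded by $\norma{\partial_s\partial_t^m\map_{\lambda}}{k,\alpha}$ uniformly in $\xi$. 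For $\Phi_{\lambda,\lambda}$ itself, Lemma~\ref{geometriclemma} with $\mu=\lambda$ gives the chord-arc lower bound $|\map_{\lambda}(t,s+\xi)-\map_{\lambda}(t,s)|\geq\tfrac{1}{4}\CA{\zcurve_0}^{-1}|\xi|$ uniformly in $t$, so $\norma{\Phi_{\lambda,\lambda}}{\infty}\leq 4\CA{\zcurve_0}$, and iterating the Leibniz and chain rules as in \eqref{eq:Psi1} controls $\norma{\Phi_{\lambda,\lambda}(t,\cdot,\xi)}{k,\alpha}$ in terms of $\CA{\zcurve_0}$ and $\norma{\map_{\lambda}^{(n)}}{k+1,\alpha}$. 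Multiplying these estimates and using that $C^{k,\alpha}(\T)$ is a Banach algebra yields $\Norma{\Psi_b}{k,\alpha}\leq C$, uniformly in $t\in[0,t_0]$, with $C$ of the form stated in the proposition.

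Theorem~\ref{thm:generalkernel} then gives $\norma{\TT{\Psi_b}f}{k,\alpha'}\leq\frac{C}{\alpha-\alpha'}\norma{f}{k,\alpha}$, which is the estimate \eqref{goodkernel2} on each time slice; continuity in $t$ follows from dominated convergence applied to $\Psi_b(t,\cdot,\cdot)$, since the $\xi$-factor in $\Phi_{\lambda,\lambda}$ absorbs the chord-arc singularity at $\xi=0$ uniformly in $t$. Iterating in $n_1$ places $\TT{\Phi_{\lambda,\lambda}}f$ in $C_t^{n_1}C_s^{k,\alpha'}$ and gives \eqref{goodkernel3} at $t=0$; comparing $\partial_t^{n_1}\Phi_{\lambda,\lambda}|_{t=0}=\sum_b F_b\Psi_b|_{t=0}$ with $n_1!\,\Phi_{\lambda,\lambda}^{(n_1)}$ delivers the identity \eqref{goodkernel1}. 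The main obstacle is the combinatorial bookkeeping required to bound $\Norma{\Psi_b}{k,\alpha}$ uniformly as $t\downarrow 0$ and to justify commuting $\partial_t^{n_1}$ with the $\xi$-integral defining $\TT{\Phi_{\lambda,\lambda}}$; both reduce to dominated convergence once one observes that, thanks to the $\xi$ numerator built into $\Phi_{\lambda,\lambda}$, the integrand of $\TT{\Psi_b}f$ is uniformly bounded in $(t,s,\xi)$ rather than merely integrable.
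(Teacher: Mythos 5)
Your proof is correct and follows essentially the same route as the paper: for $\mu=\lambda$ the factor $(\mu-\lambda)^{|a|}$ kills all terms except $a=0$ in the Fa\`a di Bruno expansion \eqref{Faa:1}, the surviving weights $\Phi_{\lambda,\lambda}^{|b|+1}\prod_m(\partial_t^m\triangle_\xi\map_\lambda)^{b_m}$ are bounded in the weight norm via the chord-arc estimate and the arguments behind Corollary~\ref{Phi:bdd}, Theorem~\ref{thm:generalkernel} then gives the bounds, and dominated convergence extends the identity to $t=0$, with the Leibniz rule handling $k\geq 1$. The only difference is presentational: you spell out the uniform bound on the weights (via $\triangle_\xi g=\int_0^1 g'(s+r\xi)\dif r$ and the Banach algebra property), whereas the paper delegates this to Corollary~\ref{Phi:bdd}.
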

\begin{proof}
Since $\mu=\lambda$ the expression \eqref{Faa:1} reads as
\begin{equation}\label{Faa:good}
\partial_t^{n_1}\TT{\Phi_{\lambda,\lambda}}f
=\sum_{b\in\pi_{n_1}}F_bI_{0,b}.
\quad t>0.
\end{equation}
Hence, for $k=0$, \eqref{goodkernel2} follows from Corollary~\ref{Phi:bdd}. Moreover, \eqref{Faa:good} holds at $t=0$ by the dominated convergence theorem, and thus \eqref{goodkernel3}\eqref{goodkernel1} follow from Corollary~\ref{Phi:bdd} too. For $k\geq 1$, the same holds by applying the Leibniz rule.
\end{proof}

\subsubsection{Case $\mu\neq\lambda$}
As we mentioned, in contrast to $\Phi_{\lambda,\lambda}$, the expression \eqref{Faa:1} holds at $t=0$ only for ${n_1}=0$, whilst for ${n_1}\geq 1$ we only have
$$(\TT{\Phi_{\lambda,\mu}}f)^{(n_1)}
=\tfrac{1}{{n_1}!}\lim_{t\downarrow 0}\TT{\partial_t^{n_1}\Phi_{\lambda,\mu}(t)}f,$$
because $\TT{\Phi_{\lambda,\mu}^{(n_1)}}f$ is not well-defined. 
Let us prove a lemma which will be helpful. In particular, we consider the auxiliary kernel
$$\Psi_{\lambda,\mu}(t,s,\xi):=\frac{t}{\map_{\mu}(t,s+\xi)-\map_{\lambda}(t,s)},$$
which, by Lemma \ref{geometriclemma}, satisfies
$$\norma{\Psi_{\lambda,\mu}}{\infty}
\leq 2c(s)^{-1},$$
and, recalling \eqref{goodbad},
$$|\Psi_{\lambda,\mu}(t,s,\xi)
-\Psi_{\lambda,\mu}(t,s',\xi)|
\leq 8c(s)^{-1}(\CA{\zcurve_0}\snorma{\partial_s\map_{\mu}}{\alpha}+c(s)^{-1}\snorma{\vectW}{\alpha})|s-s'|^\alpha.$$
Therefore, $\Norma{\Psi_{\lambda,\mu}}{0,\alpha}$ is bounded.

\begin{lemma}\label{lemma:index} 
For every $(a,b)\in\N_0^2$ with $a\leq b$ consider 
$$\mathcal{C}_{\lambda,\mu}^{a,b}(t,s)
:=\int_{\T}\frac{\xi^a\dif\xi}{(\map_{\mu}(t,s+\xi)-\map_{\lambda}(t,s))^{b+1}}
=\int_{\T}\Phi_{\lambda,\mu}^{b+1}\xi^{-(b-a+1)}\dif\xi
,\quad t>0.$$
Then,
\begin{equation}
\norma{\mathcal{C}_{\lambda,\mu}^{a,b}}{C_tC_s^{0,\alpha'}}
\leq\frac{C}{\alpha-\alpha'},
\end{equation}
where $C$ depends on $\CA{\zcurve_{0}}$, $\norma{1/c}{\infty}$ and the $C^{(b-a-n)_++1,\alpha}$-norm of $\zcurve^{(n)},\vectV^{(n)}$ for $0\leq n\leq n_0-1$. In particular,
\begin{equation}
\norma{(\mathcal{C}_{\lambda,\mu}^{a,b})^{(0)}}{k,\alpha'}
\leq\frac{C_0}{\alpha-\alpha'},
\end{equation}
where $C_0$ depends on $\CA{\zcurve_{0}}$ and the $C^{k+b-a+1,\alpha}$-norm of $\zcurve_0$.
\end{lemma}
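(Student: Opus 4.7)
The plan is to follow the same architecture as in Theorem \ref{thm:generalkernel} and Proposition \ref{prop:goodkernel}, but now we must extract the leading singular profile in $\xi$ since $\xi^{-(b-a+1)}$ is no longer integrable at $\xi=0$. The key tool remains Lemma \ref{geometriclemma}, which now exhibits the \emph{complex} lower bound $|\map_\mu(t,s+\xi)-\map_\lambda(t,s)|\gtrsim\sqrt{\CA{\zcurve_0}^{-2}\xi^2+t^2c(s)^2(\mu-\lambda)^2}$. Using the \emph{ansatz} $\vectV(t,s)=tc(s)\partial_s\zcurve_0(s)^\perp$ together with $\partial_s\zcurve_0^\perp=i\partial_s\zcurve_0$, the first step is to Taylor-expand the denominator as
\begin{equation*}
\map_\mu(t,s+\xi)-\map_\lambda(t,s)=\partial_s\zcurve_0(s)[\xi+i(\mu-\lambda)tc(s)]+\mathcal{E}(t,s,\xi),
\end{equation*}
where $\mathcal{E}$ collects higher-order contributions in $\xi$, $t$, and their Hölder corrections, so that the lower bound of Lemma \ref{geometriclemma} is absorbed into the complex linear factor.

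Next I would write $1/(\mathrm{denom})^{b+1}$ as a binomial/Taylor expansion in $\mathcal{E}$ truncated at order $b-a$, producing a sum of terms of the form
\begin{equation*}
\frac{\xi^a\,\alpha_k(t,s)\,\mathcal{E}(t,s,\xi)^k}{(\partial_s\zcurve_0)^{b+1+k}\bigl(\xi+i(\mu-\lambda)tc(s)\bigr)^{b+1+k}}
\end{equation*}
for $0\leq k\leq b-a$, plus a Taylor remainder. The leading-order integrals ($k=0$) reduce, after computing $\mathcal{E}^k$ and collecting powers, to integrals $\int_\T \xi^j(\xi+iA)^{-m}\,d\xi$ with $A=(\mu-\lambda)tc(s)$ and $0\leq j\leq m-1$; these admit explicit rational/logarithmic antiderivatives and are bounded uniformly in $A\to 0$. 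The Taylor remainder carries an extra factor of $\xi^{b-a+1}$ (the loss in the ansatz) that exactly compensates the $\xi^{-(b-a+1)}$ singularity, leaving an integrand controllable by Corollary \ref{Phi:bdd} and the $C^{(b-a-n)_++1,\alpha}$-norm of the Taylor coefficients $\zcurve^{(n)},\vectV^{(n)}$; this accounts precisely for the regularity loss advertised in the statement.

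For the $C^{0,\alpha'}_s$ estimate, I would apply the same split-and-interpolate argument as in Theorem \ref{thm:generalkernel}, now to the decomposition above: the differences of the explicit leading integrals are estimated using that $A\mapsto \int_\T \xi^j(\xi+iA)^{-m}\,d\xi$ is $C^{0,\alpha}$ in $s$ through $A=(\mu-\lambda)tc(s)$, while the remainder is handled via the Hölder kernel estimates \eqref{eq:Phi0}--\eqref{eq:Psi1}. Finally, the limit $t\downarrow 0$ is obtained by dominated convergence applied to the uniformly bounded integrands, yielding the PV integral $\mathcal{C}_{\lambda,\mu}^{a,b(0)}$, whose $C^{k,\alpha'}$ regularity follows by iterating the same argument with $k$ additional derivatives in $s$ (explaining the $C^{k+b-a+1,\alpha}$-dependence on $\zcurve_0$).

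The main obstacle is the uniform (as $t\downarrow 0$) boundedness of the leading complex integrals $\int_\T \xi^j(\xi+iA)^{-m}\,d\xi$: the magnitude bound via Lemma \ref{geometriclemma} alone gives divergences of order $(tc)^{a-b}$ or $\log(1/tc)$, so the argument must exploit the complex \emph{phase} of the denominator rather than its modulus. The explicit antiderivatives make this manifest and will be the technical heart of the proof; once these are established, all remaining steps are routine Hölder estimates patterned on the earlier Theorem \ref{thm:generalkernel} and Proposition \ref{prop:goodkernel}.
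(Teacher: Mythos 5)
Your strategy is sound and the diagnosis of the technical heart of the matter is exactly right: absolute-value bounds from Lemma \ref{geometriclemma} only give $(tc)^{a-b}$ or logarithmic divergences, and the boundedness must come from complex cancellation in the denominator. However, you reach that cancellation by a genuinely different route from the paper. You freeze coefficients, writing the denominator as $\partial_s\zcurve_0(s)\bigl(\xi+i(\mu-\lambda)tc(s)\bigr)+\mathcal{E}$, expand $(\,\cdot\,)^{-(b+1)}$ in powers of $\mathcal{E}$, and evaluate the resulting model integrals $\int\xi^j(\xi+iA)^{-m}\dif\xi$ by explicit antiderivatives (whose worst case, $j=m-1$, tends to the bounded but side-dependent value $\log\frac{1/2+iA}{-1/2+iA}\to\pm i\pi$ as $t\downarrow0$ — the analogue of the paper's $\theta_{\lambda,\mu}$). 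The paper instead sets up a double induction on $(a,b)$: an integration by parts converts $\mathcal{C}^{a,b}$ into $\tfrac{a}{b}\mathcal{C}^{a-1,b-1}$ plus boundary terms and a term containing $\triangle_\xi\partial_s\map_\mu$, and the base case $a=0$ is resolved \emph{exactly} by Cauchy's integral formula on the closed contour $\Gamma_\mu(t)$, giving $2\pi i\theta_{\lambda,\mu}\delta_{0,b}$; the remaining term is fed back into the induction after Taylor-expanding $\partial_s\map_\mu$ in $t$ and $s$. The paper's recursion packages the cancellation once and for all in a residue computation and keeps the combinatorics tame, but it relies on the curves being closed; your frozen-coefficient computation is more elementary and would transfer directly to the non-closed (periodic or asymptotically flat) settings mentioned in the introduction. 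To make your argument airtight you must (i) justify the truncated expansion in $\mathcal{E}$ uniformly, which requires $|\mathcal{E}|\leq\tfrac12|\xi+i(\mu-\lambda)tc(s)|\,|\partial_s\zcurve_0|$ on the singular region $|\xi|,t$ small (true since $|\mathcal{E}|\lesssim|\xi|(|\xi|^\alpha+t)$ while the linear factor is $\gtrsim|\xi|+t$), treating $|\xi|\gtrsim1$ crudely; and (ii) expand $\mathcal{E}^k$ itself in powers of $\xi$ with Hölder remainders to actually gain the $b-a$ integer powers — this is where the $C^{(b-a-n)_++1,\alpha}$-norms of $\zcurve^{(n)},\vectV^{(n)}$ enter, as you correctly anticipate. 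Note also that both arguments genuinely require $\mu\neq\lambda$ (for $\mu=\lambda$ your model integrals degenerate to principal values), which is consistent with the lemma's role in the paper.
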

\begin{proof} 
We shall follow the double induction scheme on $(a,b)$:
$$\begin{array}{cccccccccccc}
b-a=0: & (0,0) & \rightarrow & (1,1) & \rightarrow & (2,2) & \rightarrow & \cdots & \rightarrow & (b,b) & \rightarrow & \cdots \\
& & & \downarrow & & \downarrow & &  & & \downarrow & & \\
b-a=1: & & & (0,1) & \rightarrow & (1,2) & \rightarrow & \cdots & \rightarrow & (b-1,b) & \rightarrow & \cdots \\
& & & & & \downarrow & & & & \downarrow & & \\
b-a=2: & & & & & (0,2) & \rightarrow & \cdots & \rightarrow & (b-2,b) & \rightarrow & \cdots \\
\\
\cdots & & & & & & & \cdots & & & & \cdots
\end{array}$$
where better regularity is required at every drop row.\\
\indent We claim that the following identity holds
\begin{equation}\label{IH}
\mathcal{C}_{\lambda,\mu}^{a,b}
=\frac{2\pi i\theta_{\lambda,\mu}B_{a}^{a,b}}{\partial_s\map_{\mu}^{a+1}}\delta_{a,b}
-\sum_{d=0}^{a}\frac{B_{d}^{a,b}}{\partial_s\map_{\mu}^{d+1}}\left(\frac{X_{\lambda,\mu}^{a-d,b-d}}{b-d}+\int_{\T}\triangle_\xi\partial_s\map_{\mu}\Phi_{\lambda,\mu}^{b+1-d}\xi^{-(b-a)}\dif\xi\right),
\end{equation}
where
\begin{equation*}
B_{d}^{a,b}:=\binom{a}{d}\binom{b}{d}^{-1},\quad 
X_{\lambda,\mu}^{a,b}(t,s)
:=\frac{2^{-a}(1-(-1)^a)}{(\map_{\mu}(t,s+\tfrac{1}{2})-\map_{\lambda}(t,s))^{b}},\quad 
\delta_{a,b}:=\begin{cases}1,&a=b,\\ 0,&a\neq b.\end{cases}
\end{equation*}
We define the auxiliary functions
$$\tilde{\mathcal{C}}_{\lambda,\mu}^{a,b}(t,s):=\int_{\T}\frac{\xi^a\partial_s\map_{\mu}(t,s+\xi)}{(\map_{\mu}(t,s+\xi)-\map_{\lambda}(t,s))^{b+1}}\dif\xi.$$
We can then write
\begin{equation}\label{index:1}
\mathcal{C}_{\lambda,\mu}^{a,b}
=\frac{1}{\partial_s\map_{\mu}}\left(\tilde{\mathcal{C}}_{\lambda,\mu}^{a,b}-\int_{\T}\triangle_\xi\partial_s\map_{\mu}\Phi_{\lambda,\mu}^{b+1}\xi^{-(b-a)}\dif\xi\right).
\end{equation}
In particular, the case $a=0$ in \eqref{IH} follows from \eqref{index:1}, because $X_{\lambda,\mu}^{0,b}=0$ and Cauchy's integral formula yields (for $\mu\neq\lambda$)
$$\tilde{\mathcal{C}}_{\lambda,\mu}^{0,b}
=\int_{\map_{\mu}}\frac{\dif x}{(x-\map_{\lambda}(t,s))^{b+1}}
=2\pi i\theta_{\lambda,\mu}\delta_{0,b}.$$
For $a\geq 1$, an integration by parts yields 
\begin{align*}
a\mathcal{C}_{\lambda,\mu}^{a-1,b-1}
&=\int_{\T}\frac{a\xi^{a-1}\dif\xi}{(\map_{\mu}(t,s+\xi)-\map_{\lambda}(t,s))^{b}}\\
&=\frac{\xi^a}{(\map_{\mu}(t,s+\xi)-\map_{\lambda}(t,s))^{b}}\Big|_{\xi=-\tfrac{1}{2}}^{\xi=+\tfrac{1}{2}}
+b\int_{\T}\frac{\xi^a\partial_s\map_{\mu}(t,s+\xi)}{(\map_{\mu}(t,s+\xi)-\map_{\lambda}(t,s))^{b+1}}\dif\xi,
\end{align*}
that is,
$$a\mathcal{C}_{\lambda,\mu}^{a-1,b-1}=X_{\lambda,\mu}^{a,b}
+b\tilde{\mathcal{C}}_{\lambda,\mu}^{a,b}.$$
Hence, \eqref{index:1} reads as
$$
\mathcal{C}_{\lambda,\mu}^{a,b}
=\frac{1}{\partial_s\map_{\mu}}\left(\frac{a}{b}\mathcal{C}_{\lambda,\mu}^{a-1,b-1}-\frac{X_{\lambda,\mu}^{a,b}}{b}-\int_{\T}\triangle_\xi\partial_s\map_{\mu}\Phi_{\lambda,\mu}^{b+1}\xi^{-(b-a)}\dif\xi\right),
$$
which allows to prove \eqref{IH} by induction.\\ 
\indent In light of the identity \eqref{IH}, to prove the result it is enough to control its last term. Let us assume w.l.o.g.~that $d=0$ for simplicity. Recalling \S\ref{sec:Notation}\ref{Notation:Taylor}, we split it as
$$\int_{\T}\triangle_\xi\partial_s\map_{\mu}\Phi_{\lambda,\mu}^{b+1}\xi^{-(b-a)}\dif\xi
=\sum_{n=0}^{n_0-1}\int_{\T}\triangle_\xi\partial_s\map_{\mu}^{(n)}\Psi_{\lambda,\mu}^{n}\Phi_{\lambda,\mu}^{b+1-n}\xi^{n-(b-a)}\dif\xi.$$
Thus, the terms with $n\geq b-a$ are bounded. In particular, the case $a=b$ is done. For the terms with $n<b-a$, we split 
\begin{align*}
&\int_{\T}\triangle_\xi\partial_s\map_{\mu}^{(n)}\Psi_{\lambda,\mu}^{n}\Phi_{\lambda,\mu}^{b+1-n}\xi^{n-(b-a)}\dif\xi\\
&=t^n\sum_{j=1}^{(b-a)-n}\frac{\partial_s^{j+1}\map_{\mu }^{(n)}}{j!}\mathcal{C}_{\lambda,\mu}^{a+j,b}
+\int_{\T}\triangle_\xi^{(b-a)-n+1}\partial_s\map_{\mu}^{(n)}\Psi_{\lambda,\mu}^n\Phi_{\lambda,\mu}^{b+1-n}\dif\xi.
\end{align*}
The last term above is bounded because $\map_{\mu}^{(n)}$ belongs to $C^{b-a-n+1,\alpha}$, and the others by induction hypothesis. 
This concludes the proof for $k=0$. The case $k\geq1$ follows by applying the Leibniz rule at time $t=0$.
\end{proof}

\begin{prop}\label{prop:badop}
For all $0\leq\alpha'<\alpha$ and $f\in C^{n_1+k,\alpha}(\T)$ we have
\begin{equation}\label{badkernel2}
\norma{\TT{\Phi_{\lambda,\mu}}f}{C_t^{n_1}C_s^{0,\alpha'}}
\leq\frac{C}{\alpha-\alpha'}\norma{f}{n_1,\alpha},
\end{equation}
where $C$ depends on $\CA{\zcurve_{0}}$, $\norma{1/c}{\infty}$ and the $C^{1\vee(n_1-n),\alpha}$-norm of $\zcurve^{(n)}$, $\vectV^{(n)}$ for $0\leq n\leq n_0-1$. In particular,
\begin{equation}\label{badkernel3}
\norma{(\TT{\Phi_{\lambda,\mu}}f)^{(n_1)}}{k,\alpha'}\leq\frac{C_0}{\alpha-\alpha'}\norma{f}{n_1+k,\alpha},
\end{equation}
where $C_0$ depends on $\CA{\zcurve_{0}}$ and the $C^{k+1\vee(n_1-n),\alpha}$-norm of $\zcurve^{(n)}$, $\vectV^{(n)}$ for $0\leq n\leq n_1$. 
\end{prop}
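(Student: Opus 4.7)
The plan is to proceed in analogy with the proof of Proposition~\ref{prop:goodkernel}, starting from formula \eqref{Faa:1} for $\partial_t^{n_1}\TT{\Phi_{\lambda,\mu}}f$ valid a priori only for $t>0$. The key new difficulty in the case $\mu\neq\lambda$ is that for $|a|>0$ the integrand of $I_{a,b}$ carries the extra factor $\xi^{-|a|}$, so $\Phi_{\lambda,\mu}^{|b|+1}\xi^{-|a|}$ develops a non-integrable singularity at $\xi=0$ as $t\downarrow 0$; consequently $\Phi_{\lambda,\mu}^{(n_1)}$ is not well-defined and one cannot differentiate under the integral, as was remarked at the beginning of this subsection. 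The idea is instead to reorganize each $I_{a,b}$ as a finite linear combination of coefficients of the form $\mathcal{C}^{p,q}_{\lambda,\mu}$ of Lemma~\ref{lemma:index} (multiplied by spatial derivatives of $f$ and $\partial_t^m\map_{\mu}$), plus a remainder integral whose kernel lies in $L^\infty(\T;C^{0,\alpha}(\T))$ uniformly in $t$.

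The algebraic step is a double Taylor expansion in $\xi$: exploiting the hypothesis $f\in C^{n_1,\alpha}$ in \eqref{badkernel2} one writes
$$
\triangle_\xi f(s)=\sum_{j=1}^{n_1}\frac{\partial_s^jf(s)}{j!}\xi^{j-1}+\xi^{n_1}\triangle_\xi^{n_1+1}f(s),
$$
and analogously expands each factor $\partial_t^m\triangle_\xi\map_{\mu}=\triangle_\xi\partial_t^m\map_{\mu}$ appearing in $I_{a,b}$ to the order needed to absorb the remaining negative $\xi$-powers. Since $|a|\leq|b|\leq n_1$ for $b\in\pi_{n_1}$, after substitution each polynomial summand yields either an integrand with a non-negative $\xi$-power against the bounded $\Phi_{\lambda,\mu}^{|b|+1}$ (trivially bounded uniformly in $t$), or an integral of the form $\mathcal{C}^{p,q}_{\lambda,\mu}$ with $0\leq p\leq q$, controlled uniformly in $t\in[0,t_0]$ by Lemma~\ref{lemma:index}; the residual piece is an integral with integrand $\triangle_\xi^{n_1+1}f(s)$ times a kernel built from bounded powers of $\Phi_{\lambda,\mu}$, $\Psi_{\lambda,\mu}$ and highest-order H\"older differences of $\partial_t^m\map_{\mu}$, which lies in $L^\infty(\T;C^{0,\alpha}(\T))$ uniformly in $t$ by \eqref{eq:Phi0}--\eqref{eq:Psi1} and Lemma~\ref{geometriclemma}. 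Theorem~\ref{thm:generalkernel} then controls the remainder in $C^{0,\alpha'}$ with the factor $(\alpha-\alpha')^{-1}$.

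Summing over $b\in\pi_{n_1}$ and $a\leq b$ and using boundedness of the coefficients $V_{a,b}$ (which involve only $\partial_t^m\vectV$) gives \eqref{badkernel2}; the limit $t\downarrow 0$ exists by continuity of the $\mathcal{C}^{p,q}_{\lambda,\mu}$ at $t=0$ (second part of Lemma~\ref{lemma:index}) together with dominated convergence in the remainder. For \eqref{badkernel3} one passes to $t=0$ in the reorganization and distributes the $k$ spatial derivatives by the Leibniz rule across $\partial_s^jf$, $\partial_s^l\partial_t^m\map_{\mu}|_{t=0}$, $\mathcal{C}^{p,q}_{\lambda,\mu}|_{t=0}$, and the residual kernel, invoking the $C^{k,\alpha'}$-bound of Lemma~\ref{lemma:index} at $t=0$. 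The main obstacle is the combinatorial book-keeping: one must verify that every $\mathcal{C}^{p,q}_{\lambda,\mu}$ produced by the double expansion satisfies $p\leq q$ and that each such coefficient appears multiplied only by derivatives $\partial_s^l\partial_t^n\map_{\mu}|_{t=0}$ with $l+n\leq n_1$, so that the regularity $C^{1\vee(n_1-n),\alpha}$ on $\zcurve^{(n)}$ and $\vectV^{(n)}$ claimed in the statement indeed suffices---here the ``$1$'' accounts for the H\"older control of $\Phi_{\lambda,\mu}$, $\Psi_{\lambda,\mu}$ already used in \eqref{eq:Psi1}, while the ``$n_1-n$'' counts the extra $\xi$-derivatives absorbed into $\partial_t^n\map_{\mu}$ during the second Taylor expansion.
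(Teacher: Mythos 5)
Your proposal follows essentially the same route as the paper's proof: starting from \eqref{Faa:1}, you peel off the negative powers $\xi^{-|a|}$ in $I_{a,b}$ by Taylor-expanding $\triangle_\xi f$ in $\xi$ (and likewise the factors $\partial_t^m\triangle_\xi\map_\mu$ through their coefficients $\map_\mu^{(n)}$), reducing everything to the coefficients $\mathcal{C}^{p,q}_{\lambda,\mu}$ of Lemma~\ref{lemma:index} plus a H\"older remainder, and then apply the Leibniz rule at $t=0$ for $k\geq 1$. The ``combinatorial book-keeping'' you defer is precisely what the paper's proof supplies — the key observation being that $b\in\pi_{n_1}$ together with $a_m<b_m$ forces $|a|+m\leq n_1$, so only remainders $\triangle_\xi^{|a|+m-n+1}\map_\mu^{(n)}$ with $|a|+m-n\leq n_1-n$ ever arise, which is exactly the stated $C^{1\vee(n_1-n),\alpha}$ dependence.
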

\begin{proof} 
Given $a\leq b\in\pi_{n_1}$, let us show that $I_{a,b}$ in \eqref{Faa:1} is bounded. If $a=0$ we are done as in Proposition~\ref{prop:goodkernel}.  
Otherwise, similarly to Lemma~\ref{lemma:index}, we split it as
$$
I_{a,b}=\sum_{j=1}^{|a|}\frac{\partial_s^j f}{j!}J_{a,b,j}+J_{a,b},
$$
where
\begin{align*}
J_{a,b,j}&:=\int_{\T}\Phi_{\lambda,\mu}^{|b|+1}\xi^{j-(|a|+1)}\prod_{m=1}^{n_1}(\partial_t^m\triangle_\xi\map_{\mu})^{b_m-a_m}\dif\xi,\\
J_{a,b}&:=\int_{\T}
\triangle_\xi^{|a|+1}f
\Phi_{\lambda,\mu}^{|b|+1}\prod_{m=1}^{n_1}(\partial_t^m\triangle_\xi\map_{\mu})^{b_m-a_m}\dif\xi.
\end{align*}
In order to bound $J_{a,b}$ we need $f$ to be in $C^{|a|,\alpha}$ and notice that the largest $|a|$ is for $a_1=b_1=n_1$.\\ 
Fixed $(a,b)$, the most singular term of $J_{a,b,j}$ is for $j=1$. Let us analize it. If $a=b$ this is $J_{b,b,1}=\mathcal{C}_{\lambda,\mu}^{1,|b|}$.
Otherwise, let $m$ be the first index for which $a_m<b_m$. As we did for $f$, let us split $J_{a,b,1}$ as
$$\partial_t^m\triangle_\xi\map_{\mu}
=\sum_{n=m}^{n_0-1}\frac{n!}{(n-m)!}t^{n-m}\triangle_\xi\map_{\mu}^{(n)}.$$
Thus, the terms with $n\geq |a|+m$ are bounded. For the terms with $n<m+|a|$ we split
$$\triangle_\xi\map_{\mu}^{(n)}
=\sum_{d=1}^{|a|+m-n}\frac{\partial_s^d\map_{\mu}^{(n)}}{d!}\xi^{d-1}
+\xi^{|a|+m-n}\triangle_\xi^{|a|+m-n+1}\map_{\mu}^{(n)}.$$
Then, since $b_m\geq 1$ implies $|a|<|b|\leq n_1-(m-1)$,
i.e. $|a|+m\leq n_1$, it is enough to impose that $\map_{\mu}^{(n)}$ belongs to $ C^{n_1-n,\alpha}$. We repeat this $|b-a|$ times, where terms $\mathcal{C}$ as in Lemma \ref{lemma:index} appear and the worst is again $\mathcal{C}_{\lambda,\mu}^{1,|b|}$ with $|b|\leq n_1$. This concludes the proof for $k=0$. The case $k\geq1$ follows by applying the Leibniz rule at time $t=0$.
\end{proof}

\section{Proof of the main results}\label{sec:proof}

As we mentioned in Remark~\ref{Rem:BReq}, it can be seen that the velocity field \eqref{velocity:1}, with $p$ determined by the Bernoulli's law, is a weak solution to (IE) if and only if 
\begin{equation}\label{eq:classical}
\B:=\partial_t\tilde{\varpi}\partial_s\zcurve-\varpi(\partial_t\zcurve-\BRO)=0,
\end{equation}
or equivalently \eqref{BReq}. Moreover, the jump in $p$ across $\Gamma$ vanishes in this case:
\begin{equation}\label{[p]:classical}
[p]=\Re_{\partial_{s}\zcurve}\B=0.
\end{equation}
\indent In our construction, the Reynolds stress $R$ introduces a relaxation whereby \eqref{eq:classical} is regularized. As we saw in Section \ref{sec:subsolution}, under our choice of $\bar{\omega}$ \eqref{ansatzomega}, the construction of $R$ leading to a subsolution adapted to $\Turzone$ is subordinated to determine $(\zcurve,\varpi)$ up to some order $t^{n_0}$. This was the key observation in \cite{Piecewise} for the unstable Muskat problem.
In light of Propositions \ref{p:solvability} and \ref{prop:|R|}, we have to take $(\zcurve,\varpi)$
satisfying three pointwise conditions \ref{p:solvability:pointwise}-\ref{prop:|R|:2}, which can be expressed compactly as
\begin{equation}\label{pointwise}
(\mean{\B}-t\partial_s(\q\partial_s\zcurve))^{2)}=0,
\end{equation}
coupled with two average conditions \ref{Bq}-\ref{p:solvability:average}
\begin{equation}\label{averagecond}
\begin{split}
\int\mean{\B}^{(3}&=0,\\
\int(q_1|\partial_s\zcurve|-c\Im_{\partial_s\zcurve_0}\dev{\B})^{(2}&=0,
\end{split}
\end{equation}
where
\begin{subequations}
\label{b}
\begin{align}
\mean{\B}
&=\tfrac{1}{2}(\partial_t\tilde{\varpi}\partial_s\zcurve-\varpi(\partial_t\zcurve-\mean{\BRO})),\label{b:1}\\
\dev{\B}
&=\tfrac{1}{2}(\partial_t\tilde{\varpi}\partial_s\vectV-\varpi(\vectW-\dev{\BRO})),\label{b:2}
\end{align}
\end{subequations}
and $q=q_1+iq_2$ satisfying
\begin{equation}\label{q}
(q_1|\partial_s\zcurve|-c\Im_{\partial_s\zcurve_0}\dev{\B})^{1)}=0,
\quad\quad
q_2^{(0)}=c\Re_{\partial_s\zcurve_0}\dev{\B}^{(0)}.
\end{equation}

\smallskip

Let $(\zcurve_0,\varpi_0)$ as in \eqref{initial} for some $k_0\geq 0$ and $\alpha>0$ big enough.
We define recursively $(\zcurve,\tilde{\varpi})$ by means of its Taylor decomposition 
\begin{align*}
\zcurve(t,s)&:=\sum_{n=0}^{n_0}t^n\zcurve^{(n)}(s)+t^{n_0+1}\zcurve^{(n_0+1}(t,s),\\
\tilde{\varpi}(t,s)&:=\sum_{n=1}^{n_0}t^n\tilde{\varpi}^{(n)}(s)+t^{n_0+1}\tilde{\varpi}^{(n_0+1}(t,s),
\end{align*}
starting from $(\zcurve,\tilde{\varpi})^{(0)}=(\zcurve_0,0)$, namely the term of order $n=0,1,2$ in \eqref{pointwise} determines $(\zcurve,\tilde{\varpi})^{(n+1)}$, and so $n_0=3$ is enough.

\subsection{Choice of $(\zcurve,\tilde{\varpi})^{(1)}$}\label{sec:proof:1} The zero-order term of \eqref{pointwise} reads as
\begin{equation}\label{pointwise:0}
\mean{\B}^{(0)}=0.
\end{equation}
Since \eqref{b:1} yields
$$\mean{\B}^{(0)}
=\tfrac{1}{2}(\tilde{\varpi}^{(1)}\partial_s\zcurve_0-\varpi_0(\zcurve^{(1)}-\mean{\BRO}^{(0)})),$$
\eqref{pointwise:0} is equivalent to 
$$
\varpi_0(\zcurve^{(1)}-\mean{\BRO}^{(0)})\cdot\partial_s\zcurve_{0}^\perp=0,\quad\quad
\tilde{\varpi}^{(1)}=\varpi_0(\zcurve^{(1)}-\mean{\BRO}^{(0)})\cdot\partial_s\zcurve_{0}.
$$
Thus, since $\mean{\BRO}^{(0)}=\BRO_0$, it is enough to set
\begin{equation}\label{zw:1}
\zcurve^{(1)}=\BRO_0,
\quad\quad
\tilde{\varpi}^{(1)}=0.
\end{equation}
In light of Section~\ref{sec:BRO}, we have $\zcurve^{(1)}\in C^{k_0,\alpha_1}(\T;\R^2)$ for any $0<\alpha_1<\alpha$.

\begin{Rem}
Notice that \eqref{pointwise:0} can be understood as that \eqref{eq:classical} must hold at $t=0$. In particular \eqref{[p]:classical} holds at $t=0$ in the sense that there is not jump of $p$ across $\Gamma_0$ 
$$[p^{(0)}]
=2\Re_{\partial_s\zcurve_0}\mean{\B}^{(0)}=0.$$
\end{Rem}

\subsection{Choice of $(\zcurve,\tilde{\varpi})^{(2)}$} The first-order term of \eqref{pointwise} reads as 
\begin{equation}\label{pointwise:1}
\mean{\B}^{(1)}=\partial_s(\q^{(0)}\partial_s\zcurve_0).
\end{equation}
On the one hand, \eqref{b:1} and \eqref{zw:1} yield
$$\mean{\B}^{(1)}
=\tfrac{1}{2}(2\tilde{\varpi}^{(2)}\partial_s\zcurve_0-\varpi_0(2\zcurve^{(2)}-\mean{\BRO}^{(1)})).$$
On the other hand, since $|\partial_s\zcurve_0|=1$ implies 
$\partial_s^2\zcurve_0=\kappa_0\partial_s\zcurve_0^\perp$ with $\kappa_0:=\partial_s^2\zcurve_0\cdot\partial_s\zcurve_0^\perp$ $\equiv$ (signed) curvature of $\zcurve_0$, we have
$$\partial_s(\q^{(0)}\partial_s\zcurve_0)
=(\partial_s\q^{(0)}+i\kappa_0\q^{(0)})\partial_s\zcurve_0.$$
In particular, since $\dev{\BRO}^{(0)}=-\tfrac{1}{4}\varpi_0\partial_s\zcurve_0$ and \eqref{b:2} yield
$$\dev{\B}^{(0)}
=-\tfrac{1}{2}\varpi_0(\vectW-\dev{\BRO}^{(0)})
=-\tfrac{1}{2}\varpi_0(\tfrac{1}{4}\varpi_0+ic)\partial_s\zcurve_0,$$
\eqref{q} implies
$$\q^{(0)}
=ic\dev{\B}^{(0)*}\partial_s\zcurve_0
=-\tfrac{1}{2}c\varpi_0(c+\tfrac{1}{4}i\varpi_0).$$
Therefore, \eqref{pointwise:1} is equivalent to
\begin{align*}
\varpi_0(2\zcurve^{(2)}-\mean{\BRO}^{(1)})\cdot\partial_s\zcurve_{0}^\perp
&=\tfrac{1}{4}\partial_s(c\varpi_0^2)+\kappa_0c^2\varpi_0,\\
2\tilde{\varpi}^{(2)}-\varpi_0(2\zcurve^{(2)}-\mean{\BRO}^{(1)})\cdot\partial_s\zcurve_0
&=\tfrac{1}{4}\kappa_0c\varpi_0^2-\partial_s(c^2\varpi_0).
\end{align*}
Thus, it is enough to set
\begin{align*}
2\zcurve^{(2)}
&=\mean{\BRO}^{(1)}
+(\tfrac{1}{2}c\partial_s\varpi_0+\tfrac{1}{4}\varpi_0\partial_sc+\kappa_0c^2)\partial_s\zcurve_0^\perp,\\
2\tilde{\varpi}^{(2)}
&=\tfrac{1}{4}c\kappa_0\varpi_0^2-\partial_s(c^2\varpi_0).
\end{align*}
On the one hand, $\tilde{\varpi}^{(2)}\in C^{k_0-1,\alpha}(\T;\R)$. On the other hand, in light of Section \ref{sec:BRO}, we have $\zcurve^{(2)}\in C^{k_0-1,\alpha_2}(\T;\R^2)$ for any $0<\alpha_2<\alpha_1$.

\begin{Rem} 
For $|\varpi_0|\gg0$ 
one may set also $\tilde{\varpi}^{(2)}=0$ by taking \begin{equation}\label{z/w}
2\zcurve^{(2)}=
\mean{\BRO}^{(1)}
-\frac{2}{\varpi_0}\partial_s(\q^{(0)}\partial_s\zcurve_0).
\end{equation}
Furthermore, we may set $\tilde{\varpi}=0$.
For the Birkhoff-Rott equations \eqref{BReq} this can be done by taking $r=0$, which can be understood as fixing the parametrization that keeps $\varpi$ constant in time. Moreover, in this case one may assume also that $\varpi_0$ is constant in $s$ by choosing $\zcurve_0$ properly (not necessarily arc-length). However, for mixed sign vorticities the choice \eqref{z/w} is singular. In spite of this, since $(\partial_s(\q^{(0)}\partial_s\zcurve_0))\cdot\partial_s\zcurve_0^\perp\sim\varpi_0$, it is not necessary to divide by $\varpi_0$ by taking $\tilde{\varpi}^{(2)}$ as above.
\end{Rem}

\subsection{Choice of $(\zcurve,\tilde{\varpi})^{(3)}$}
The second-order term of \eqref{pointwise} reads as
\begin{equation}\label{pointwise:2}
\mean{\B}^{(2)}=\partial_s(\q\partial_s\zcurve)^{(1)}.
\end{equation}
On the one hand, \eqref{b:1} and \eqref{zw:1} yield
$$\mean{\B}^{(2)}
=\tfrac{1}{2}(3\tilde{\varpi}^{(3)}\partial_s\zcurve_0+2\tilde{\varpi}^{(2)}\partial_s\zcurve^{(1)}-\varpi_0(3\zcurve^{(3)}-\mean{\BRO}^{(2)})).$$
On the other hand, we split
$$
\partial_s(\q\partial_s\zcurve)^{(1)}
=
\underbrace{\partial_s(\q^{(0)}\partial_s\zcurve^{(1)}+q_1^{(1)}\partial_s\zcurve_0)}_{\equiv\tilde{\q}}
+\partial_s(i\q_2^{(1)}\partial_s\zcurve_0)
=\tilde{\q}+(i\partial_sq_2^{(1)}-\kappa_0q_2^{(1)})\partial_s\zcurve_0.$$
In particular, since \eqref{b:2} and \eqref{zw:1} yield $$\dev{\B}^{(1)}=\tfrac{1}{2}\varpi_0\dev{\BRO}^{(1)},$$
\eqref{q} implies
$$\q_1^{(1)}
=-(\q_1^{(0)}\partial_s\zcurve^{(1)}+ic\dev{\B}^{(1)})\cdot\partial_s\zcurve_0
=\tfrac{1}{2}c\varpi_0(c\partial_s\zcurve-i\dev{\BRO})^{(1)}\cdot\partial_s\zcurve_0.$$
Since $q_2^{(1)}$ is free, \eqref{pointwise:2} is equivalent to solve
\begin{align*}
\int(\mean{\B}^{(2)}-\tilde{\q})\cdot\partial_s\zcurve_0^\perp
&=0.\\
(\mean{\B}^{(2)}-\tilde{\q})\cdot\partial_s\zcurve_0
&=-\kappa_0\underbrace{\int_0^s(\mean{\B}^{(2)}-\tilde{\q})\cdot\partial_s\zcurve_0^\perp\dif s_1}_{=q_2^{(1)}}.
\end{align*}
Thus, it is enough to set
\begin{align*}
3\zcurve^{(3)}&=\mean{\BRO}^{(2)}+H\partial_s\zcurve_0^\perp,\\
3\tilde{\varpi}^{(3)}
&=-h\cdot\partial_s\zcurve_0
-\kappa_0\int_0^s(h\cdot\partial_s\zcurve_0^\perp-H\varpi_0)\dif s_1,
\end{align*}
with
$$
h
\equiv
2(\tilde{\varpi}^{(2)}\partial_s\zcurve^{(1)}
-\tilde{\q}),
\quad
H\equiv \frac{\int h\cdot\partial_s\zcurve_0^\perp}{\int\varpi_0^2}\varpi_0.
$$
In light of Section \ref{sec:BRO}, it follows that $\tilde{\varpi}^{(3)}\in C^{k_0-2,\alpha_3}(\T;\R)$ and $\zcurve^{(3)}\in C^{k_0-2,\alpha_3}(\T;\R^2)$ for any $0<\alpha_3<\alpha_2$. Therefore, Lemma \ref{lemma:vRcontrol} requires $k_0\geq 4$.

\subsection{Choice of the remainder}
The average conditions \eqref{averagecond} read as 
\begin{equation}\label{averagecond2}
\begin{split}
\int\mean{\B}^{(3}&=0,\\
\int(\mean{\B}\cdot\zcurve+ct\dev{\B}\cdot\partial_s\zcurve_0^\perp)^{(3}&=0.
\end{split}
\end{equation}
We declare $\tilde{\varpi}^{(n_0+1}=0$ and
$$
\zcurve^{(n_0+1}(t,s)
:=t^{-(n_0+1)}\int_0^t\tau^{n_0}Z(\tau,s)\dif\tau
=\int_0^1\tau^{n_0}Z(\tau t,s)\dif\tau,
$$
for some
$Z$ to be determined. Since $\varpi_0\neq 0$ we can take a cutoff function $\psi_0\in C^\infty(\T;\R_+)$ vanishing on $\{s\in\T\,:\,|\varpi_0(s)|\leq\tfrac{1}{2}\norma{\varpi_0}{\infty}\}$ and with $\int\psi_0=1$. We declare
\begin{equation}\label{ansatzZ}
Z:=\frac{1}{\varpi_{0}}(\gamma\psi_0-\delta\partial_s(\psi_0\partial_s\zcurve_{0})),
\end{equation}
with $(\gamma,\delta)\in\R^2\times\R=\R^3$ a time dependent vector, to be determined.
Let us split
\begin{align*}
\mean{\B}^{(3}
&=\tfrac{1}{2}(\varGamma-\varpi_{0} Z),\\
(\mean{\B}\cdot\zcurve+ct\dev{\B}\cdot\partial_s\zcurve_0^\perp)^{(3}
&=\tfrac{1}{2}((\varGamma-\varpi_{0}Z)\cdot\zcurve+\varLambda),
\end{align*}
where we have abbreviated
\begin{align*}
\varGamma&\equiv\partial_s\tilde{\varpi}Z+(\partial_t\tilde{\varpi}\partial_s\zcurve
-\varpi(\partial_t\zcurve^{3)}-\mean{\BRO}))^{(3},\\
\varLambda&\equiv
2(\mean{\B}^{2)}\cdot\zcurve+ct\dev{\B}\cdot\partial_s\zcurve_0^\perp)^{(3}.
\end{align*}
Therefore, \eqref{averagecond2} reads as
\begin{equation}\label{averagecond3}
\gamma(t)=\int\varGamma(t),
\quad\quad
\delta(t)=\int\varDelta(t),
\end{equation}
with $$\varDelta\equiv\frac{\varLambda+(\varGamma-\gamma\psi_0)\cdot\zcurve}{\displaystyle\int\psi_0\partial_s\zcurve_{0}\cdot\partial_s\zcurve}.$$
Hence, since \eqref{averagecond3} is an implicit equation $(\gamma,\delta)=F(\gamma,\delta)$ with $F\in C^{0,1}(\R^3;\R^3)$ and
$$|F(\gamma_1,\delta_1)-F(\gamma_0,\delta_0)|\leq\mathcal{O}(t)|(\gamma_1,\delta_1)-(\gamma_0,\delta_0)|,$$
the existence (and uniqueness) of $(\gamma,\delta)$ (and so $Z\in C_tC_s^{k_0-1}$) follows from the Banach fixed point theorem, namely the Lipschitz constant, and so the time of existence $t_1>0$, depends on $\norma{\zcurve_0}{k_0+1,\alpha}$,  $\norma{\varpi_0}{k_0,\alpha}$, $\norma{1/c}{\infty}$ and $\CA{\zcurve_{0}}$.\\

Once we have fixed $(\zcurve,\tilde{\varpi})$ and $c$ \eqref{c}, we define the turbulence zone $\Turzone(t)$ and the vorticity $\bar{\omega}(t)$ by means of the map $\map(t)$ \eqref{def:turzone} and \eqref{ansatzomega} respectively, for all $0<t\leq T$ smaller than $t_0$ in Lemma~\ref{geometriclemma} and $t_1$ above. Secondly, we define the velocity $\bar{\velocity}=\BSO(\bar{\omega})$, the pressure $\bar{p}$ by means of the Bernoulli's law \eqref{Blaw}, and the Reynolds stress $R$ as in Proposition~\ref{R:solution}. Then, Proposition \ref{p:solvability} guarantees that $R$ is uniformly bounded and so Theorem \ref{thm:hprinciple} applies. Finally, for the associated weak solutions, Proposition \ref{prop:D0} yields Theorems \ref{thm:global}-\ref{thm:local}.

\section{Piecewise harmonic subsolutions}\label{sec:piecewise}

Following \cite[\S5]{Piecewise}, we generalize the previous construction from Sections \ref{sec:subsolution}-\ref{sec:proof} to the case of subsolutions $\bar{\velocity}$ whose vorticity $\bar{\omega}$ is concentrated on $2N$ curves for $N\geq 1$. 

\subsection{Geometric setup} 
In light of Example \ref{Ex:flat}, it seems suitable to consider the grid
$\Lambda:=\{\lambda_j\,:\,1\leq|j|\leq N\}$ of $[-1,1]$ 
given by
$$
\lambda_j=\sgn j\tfrac{2|j|-1}{|\Lambda|-1}.
$$
Observe that $0<\lambda_1<\cdots<\lambda_N=1$ and $\lambda_{-j}=-\lambda_j$ for $1\leq j\leq N$. 
Given $\lambda\in\Lambda_+:=\{\lambda_j\,:\,1\leq j\leq N\}$, at each $t>0$ we define $\Turzone^{\lambda}(t)$ as the annular region in $\R^2$ whose boundary is
$$\partial\Turzone^{\lambda}(t)=\Gamma_{-\lambda}(t)\cup\Gamma_{\lambda}(t),$$
with $\Gamma_{\pm\lambda}(t):=\map_{\pm\lambda}(t,\T)$ parametrized by the map
$$\map_{\pm\lambda}(t,s):=\zcurve_{\lambda}(t,s)\pm\lambda tc(s)\partial_s\zcurve_0(s)^\perp,$$
where $\zcurve_{\lambda}$ is an evolution of $\zcurve_{0}$ to be determined. We note that, following Lemma \ref{geometriclemma}, there is not intersection of different $\Gamma_{\lambda}(t)$ for short times because we shall take $\zcurve_{\lambda}^{(1)}=\BRO_{0}$ independently of $\lambda$ (cf.~\S\ref{sec:proof:1}). The turbulence zone is then $\Turzone:=\Turzone^{\lambda_N}\supset\cdots\supset\Turzone^{\lambda_1}$. We denote also $\Gamma:=\bigcup\Gamma_{\lambda}$.

\subsection{The velocity} 
The \textit{ansatz} \eqref{ansatzomega} is generalized here by
$$\bar{\omega}(t)
:=\frac{1}{|\Lambda|}\sum_{\lambda\in\Lambda}\map_{\lambda}(t)^\sharp(\varpi_{\lambda}(t)\dif s),$$
with
$\varpi_{\lambda}=\varpi_0+\partial_s\tilde{\varpi}_{\lambda}$
and $(\zcurve,\tilde{\varpi})_{-\lambda}=(\zcurve,\tilde{\varpi})_{\lambda}$ for every $\lambda\in\Lambda_+$. More precisely, we define $(\zcurve,\tilde{\varpi})_{\lambda}$ by means of its Taylor decomposition
$$
\zcurve_{\lambda}(t,s):=\sum_{k=0}^{n_0}t^n\zcurve_{\lambda}^{(n)}(s)+t^{n_0+1}\zcurve_{\lambda}^{(n_0+1}(t,s),\quad\quad
\tilde{\varpi}_{\lambda}(t,s):=\sum_{n=0}^{n_0}t^n\tilde{\varpi}_{\lambda}^{(n)}(s),
$$
with
$$
\zcurve_{\lambda}^{(n_0+1}(t,s)
:=t^{-(n_0+1)}\int_0^t\tau^{n_0}Z_{\lambda}(\tau,s)\dif\tau
=\int_0^1\tau^{n_0}Z_{\lambda}(\tau t,s)\dif\tau,
$$
and
$Z_{\lambda}$ to be determined. \\
\indent Thus, for $t>0$ the velocity is given by $\bar{\velocity}(t):=\BSO(\bar{\omega}(t))$, 
$$\bar{\velocity}(t,x)^*
=\frac{1}{|\Lambda|}\sum_{\lambda\in\Lambda}
\frac{1}{2\pi i}\int_{\T}\frac{\varpi_{\lambda}(t,s)}{x-\map_{\lambda}(t,s)}\dif s,
\quad x\notin\Gamma(t).$$
This $\bar{\velocity}(t)$ is bounded, anti-holomorphic outside $\Gamma(t)$ but with tangential discontinuities across $\Gamma(t)$. Indeed, these limits $\bar{\velocity}_{\lambda}^{\pm}(t,s)$  are
$$\bar{\velocity}_{\lambda}^{\pm}
=\BRO_{\lambda}\mp\tfrac{1}{2|\Lambda|}\zeta_{\lambda}^*,$$
where $\zeta_{\lambda}\equiv\frac{\varpi_{\lambda}}{\partial_s\map_{\lambda}}$ and $\BRO_{\lambda}\equiv\BRO_{\lambda}(\zcurve,\varpi)$ are the Birkhoff-Rott type operators
$$\BRO_{\lambda}(t,s)^*
=\frac{1}{|\Lambda|}\sum_{\mu\in\Lambda}\frac{1}{2\pi i}\PV\!\int_{\T}\frac{\varpi_{\mu}(t,s')}{\map_{\lambda}(t,s)-\map_{\mu}(t,s')}\dif s'.$$
Notice that the $\PV$ is not necessary for $\mu\neq\lambda$ when $t>0$. Therefore,
$$
\M{\bar{\velocity}}{\lambda}=\BRO_{\lambda},
\quad\quad
\J{\bar{\velocity}}{\lambda}=-\tfrac{1}{|\Lambda|}\zeta_{\lambda}^*.
$$
This $\BRO_{\lambda}$ can be written as
$$\BRO_{\lambda}^{*}
=\frac{1}{|\Lambda|}\sum_{\mu\in\Lambda}(\zeta_{\mu}\TT{\Phi_{\lambda,\mu}}(\partial_s\map_{\mu})-\TT{\Phi_{\lambda,\mu}}(\varpi_{\mu})-\theta_{\lambda,\mu}\zeta_{\mu}),$$
with $\theta_{\lambda,\mu}:=\frac{1+\sgn(\lambda-\mu)}{2}$.
In particular, for $\lambda=\lambda_j$,
it is straightforward to check that
$$\frac{1}{|\Lambda|}\sum_{\mu\in\Lambda}\theta_{\lambda,\mu}
=\frac{1}{2}+\sgn{j}\frac{2|j|-1}{2|\Lambda|}
=\frac{1}{2}+\lambda\bar{c}_N,$$
from which we generalize Corollary \ref{cor:BRO0}:
\begin{equation}\label{BRON}
\BRO_{\lambda}^{(0)}=\BRO_{0}-\lambda\bar{c}_N\zeta_0.
\end{equation}
\subsubsection{Helmholtz decomposition of $\bar{\velocity}$} Analogously to Section \ref{sec:Helmholtz}, $\bar{\velocity}$ can be written as
$$\bar{\velocity}=\nabla\phi+\circulation K_{x_0}^*,$$
where
$$\circulation(t,x)=\frac{1}{|\Lambda|}\sum_{\lambda\in\Lambda}(1-\Ind_{\map_{\lambda}(t)}(x))\int\varpi_0,$$
and $\phi$ is the (piecewise) harmonic function
$$\phi(t,x)=
\frac{\Re}{|\Lambda|}
\sum_{\lambda\in\Lambda}\frac{1}{2\pi i}
\int_{\T}\varpi_{\lambda}(t,s)(\mathrm{L}_{\map_{\lambda}(t,s)}(x)-(1-\Ind_{\map_{\lambda}(t)}(x))\Log(x-x_0))\dif s
+O(t),$$
for $x\notin\Gamma(t)$, where $O$ can be chosen in such a way that
$$\partial_t\phi(t,x)
=\frac{\Re}{|\Lambda|}
\sum_{\lambda\in\Lambda}\frac{1}{2\pi i}\int_{\T}\frac{(\partial_s\map\partial_{t}\tilde{\varpi}-\partial_t\map\varpi)_{\lambda}(t,s)}{x-\map_{\lambda}(t,s)}\dif s,$$
and so
$$[\partial_t\phi]_{\lambda}
=\tfrac{1}{|\Lambda|}
(\varpi\Re_{\partial_s\map}(\partial_{t}\map)-\partial_{t}\tilde{\varpi})_{\lambda}.$$

\subsection{The pressure} We define $\bar{p}$ outside $\Gamma$ by means of the Bernoulli's law
$$\bar{p}:=-\partial_t\phi-\tfrac{1}{2}|\bar{\velocity}|^2
\quad\textrm{outside }\Gamma.$$
Thus, analogously to Proposition~\ref{p:[p]}, we have
$$[\bar{p}]_{\lambda}=\tfrac{1}{|\Lambda|}
(\partial_{t}\tilde{\varpi}-\varpi\Re_{\partial_s\map}(\partial_{t}\map-\BRO))_{\lambda}.$$

\subsection{The Reynolds stress} We define $R$ as 
$$
R:=\sum_{\lambda\in\Lambda_+}R^{\lambda}\car{\Turzone^{\lambda}}.
$$
Analogously to Proposition~\ref{prop:consmom}, each  $R^{\lambda}$ must satisfy
\begin{align*}
\Div R^{\lambda}&=0 \hspace{1.50cm} \textrm{in }\Turzone^{\lambda},\\
\pm(R^{\lambda}\partial_s\map^\perp)_{\pm\lambda}&=i\B_{\pm\lambda} \hspace{1.0cm} \textrm{on }\Gamma_{\pm\lambda},
\end{align*}
where each $\B_{\lambda}$ is the boundary condition
$$\B_{\lambda}
=\tfrac{1}{|\Lambda|}(\partial_t\tilde{\varpi}\partial_s\map
-\varpi(\partial_t\map-\BRO))_{\lambda}.$$
On the one hand, by \eqref{BRON} and taking $(\zcurve,\tilde{\varpi})_{\lambda}^{(1)}=(\BRO_{0},0)$ as in Section \ref{sec:proof:1}, we have
$$
\mean{\B}_{\lambda}^{(0)}=0,\quad
\dev{\B}_{\lambda}^{(0)}
=-\tfrac{|\lambda|}{|\Lambda|}\varpi_0(\bar{c}_N\varpi_0+ic)\partial_s\zcurve_0,$$
where $\mean{\B}_{\lambda}:=\tfrac{1}{2}(b_{\lambda}+b_{-\lambda})$ and $\dev{\B}_{\lambda}:=\tfrac{1}{2}(b_{\lambda}-b_{-\lambda})$ for $\lambda\in\Lambda_+$.\\
On the other hand, analogously to Proposition~\ref{prop:|R|}, since
$$|\mathring{R}^{\lambda(0)}|
=|\dev{\B}_{\lambda}^{(0)}-\tfrac{1}{2}\tr R^{\lambda(0)}\partial_s\zcurve_0|,$$
by taking $H^{\lambda}=l_3^{\lambda}=0$ and $q_2^{\lambda(0)}=c|\lambda|\dev{\B}_{\lambda}^{(0)}\cdot\partial_s\zcurve_0$, this is minimized by $\tr R^{\lambda(0)}=2\dev{\B}_\lambda^{(0)}\cdot\partial_s\zcurve_0$:
$$|\mathring{R}^{\lambda(0)}|
=|\dev{\B}_{\lambda}^{(0)}\cdot\partial_s\zcurve_0^\perp|
=\tfrac{|\lambda|}{|\Lambda|}c|\varpi_0|.$$
Finally, since
$$(\mathring{R}\partial_s\map^\perp)_{\lambda}^{(0)}
=i(\dev{\B}^{(0)}
-\Re_{\partial_s\zcurve_0}\dev{\B}_\lambda^{(0)}\partial_s\zcurve_0)
=-\Im_{\partial_s\zcurve_0}\dev{\B}_\lambda^{(0)}\partial_s\zcurve_0
=\tfrac{|\lambda|}{|\Lambda|}c\varpi_0\partial_s\zcurve_0,$$
we obtain
\begin{align*}
&(\BRO\mathring{R}\partial_s\map^\perp
+(|\mathring{R}|+e')\partial_t\map\cdot\partial_s\map^\perp)_{\lambda}^{(0)}\\
&=
(\BRO_{0}-\lambda\bar{c}_N\varpi_0\partial_s\zcurve_0)\cdot(\tfrac{|\lambda|}{|\Lambda|}c\varpi_0\partial_s\zcurve_0)+\tfrac{|\lambda|}{|\Lambda|}c|\varpi_0|(\BRO_0\cdot\partial_s\zcurve_0^\perp+\lambda c)\\
&=\tfrac{|\lambda|}{|\Lambda|}c|\varpi_0|(\lambda(c-\bar{c}_N|\varpi_0|)
+B_0),
\end{align*}
where $B_0:=\BRO_0\cdot((\sgn\varpi_0+i)\partial_s\zcurve_0)$.
The rest follows analogously to the case $N=1$.

\appendix

\section{Infinite energy lemmas}
In this section we prove two lemmas for (bounded) weak solutions which may not have finite kinetic energy $E(t)=\tfrac{1}{2}\norma{\velocity(t)}{2}^2$.

\begin{lemma}\label{De} 
Let $D$ be the dissipation measure in Proposition \ref{prop:D0}.
For any $M>0$ let $\psi_M\in C_c^\infty(\R^2;\R_+)$ be a radial function with $\car{B_{M}}\leq\psi_M\leq\car{B_{M+1}}$ and $\norma{\nabla\psi_M}{\infty}\leq 2$. Then,
$$\langle D(t_2)-D(t_1),\car{}\rangle
=\lim_{M\rightarrow\infty}\langle  D(t_2)-D(t_1),\psi_M\rangle
=\int_{\R^2}(e(t_1)-e(t_2))\dif x.$$
\end{lemma}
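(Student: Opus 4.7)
The first equality follows immediately from the support properties: by Proposition~\ref{prop:D0}, the measure $D$ is finite with $\sop D\subset\bar\Omega_{\mathrm{tur}}$, and in particular the spatial projection of $\sop(D(t_2)-D(t_1))$ is contained in a bounded set. For $M$ sufficiently large, $\psi_M\equiv 1$ on that set, so $\langle D(t_2)-D(t_1),\psi_M\rangle=\langle D(t_2)-D(t_1),\car{}\rangle$.

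For the second equality, I plug $\psi=\psi_M(x)$ (time-independent) into the definition \eqref{D(t)} and subtract:
\begin{equation*}
\langle D(t_2)-D(t_1),\psi_M\rangle
=\int_{t_1}^{t_2}\!\int_{\R^2}(e+p)\velocity\cdot\nabla\psi_M\dif x\dif\tau
+\int_{\R^2}(e(t_1)-e(t_2))\psi_M\dif x.
\end{equation*}
I would then pass $M\to\infty$ in each piece separately. The key simplification is that \emph{outside} $\Turzone$ we have $R=0$, so $p=\bar p$ and $e=\tfrac12|\bar\velocity|^2$; Bernoulli's law \eqref{Blaw} then yields the exact cancellation
\begin{equation*}
e+p=-\partial_t\phi\quad\textrm{outside }\Turzone.
\end{equation*}
Since $\sop\nabla\psi_M\subset B_{M+1}\setminus B_M$ and lies outside $\Turzone$ for large $M$, the flux term is controlled using the classical Biot--Savart decay estimates.

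For the decay, Proposition~\ref{prop:BSlaw} gives $\bar\velocity(t,x)=\mathcal{O}(|x|^{-1})$ as $|x|\to\infty$, and the integral formula \eqref{phit} together with Fubini give $\partial_t\phi(t,x)=\mathcal{O}(|x|^{-1})$ uniformly in $\tau\in[t_1,t_2]$. Hence $|(e+p)\velocity|=\mathcal{O}(|x|^{-2})$ on $\sop\nabla\psi_M$, and using $\norma{\nabla\psi_M}{\infty}\leq 2$,
\begin{equation*}
\left|\int_{t_1}^{t_2}\!\int_{\R^2}(e+p)\velocity\cdot\nabla\psi_M\dif x\dif\tau\right|
\leq C(t_2-t_1)\int_{M}^{M+1}\frac{r\dif r}{r^2}=\mathcal{O}(M^{-1})\to 0.
\end{equation*}
For the boundary term, the leading order of $\bar\velocity$ at infinity is $\frac{1}{2\pi i x}\int\varpi_0\dif s$ by \eqref{BSO:decay}, which is \emph{time-independent} (total vorticity is conserved since $\varpi=\varpi_0+\partial_s\tilde\varpi$). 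Consequently $e(t_2)-e(t_1)=\tfrac12(|\bar\velocity(t_2)|^2-|\bar\velocity(t_1)|^2)=\mathcal O(|x|^{-3})$, which is integrable over $\R^2$. Dominated convergence then yields $\int(e(t_1)-e(t_2))\psi_M\dif x\to\int(e(t_1)-e(t_2))\dif x$, and the lemma follows.

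The main subtlety is the cancellation $e+p=-\partial_t\phi$ outside the turbulence zone, which guarantees that the leading-order $|x|^{-1}$ contributions (which would individually spoil integrability of the energy) combine into a quadratic-decay flux. Everything else is bookkeeping with the decay estimates already available from the Biot--Savart representation and Bernoulli's law.
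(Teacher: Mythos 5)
Your proof is correct and follows essentially the same route as the paper: test $D$ with the time-independent $\psi_M$, kill the flux term over the annulus $B_{M+1}\setminus B_M$ using $(e+p)\velocity=\mathcal{O}(|x|^{-2})$ (the paper obtains the same $\log\frac{M+1}{M}\to 0$ bound, asserting $e+p=\mathcal{O}(|x|^{-1})$ directly, which is exactly your Bernoulli identity $e+p=-\partial_t\phi$ outside $\Turzone$), and handle the boundary term by dominated convergence via the factorization of $|\bar\velocity(t_2)|^2-|\bar\velocity(t_1)|^2$ into a sum of order $|x|^{-1}$ times a difference of order $|x|^{-2}$, the latter because the leading $\tfrac{1}{2\pi i x}\int\varpi_0$ term is time-independent.
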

\begin{proof} Since $\psi_M$ does not depend on $t$, Definition \ref{defi:dissipation} reads as
\begin{equation}\label{De:1}
\langle D(t_2)-D(t_1),\psi_M\rangle
=\int_{t_1}^{t_2}\int_{\R^2} \left(e+p\right)\velocity\cdot\nabla\psi_M\dif x\dif\tau
-\int_{\R^2}(e(t_2)-e(t_1))\psi_M\dif x.
\end{equation}
Notice that
$$\int_{\R^2}||\bar{\velocity}(t_2)|^2-|\bar{\velocity}(t_1)|^2|\dif x
=\int_{\R^2}|\underbrace{(\bar{\velocity}(t_2)+\bar{\velocity}(t_1))}_{\sim(1+|x|)^{-1}}\cdot\underbrace{(\bar{\velocity}(t_2)-\bar{\velocity}(t_1))}_{\sim(1+|x|)^{-2}}|\dif x<\infty,$$
where we have applied \eqref{BSO:decay}. Hence, the dominated convergence theorem allows to pass to the limit in the second term of \eqref{De:1}. Finally, since $|\nabla\psi_M|\leq 2\car{B_{M+1}\setminus B_M}$ and $\velocity,(e+p)\sim (1+|x|)^{-1}$, we have
$$\int_{\R^2} |(e+p)\velocity\cdot\nabla\psi_M|\dif x
\leq 2\int_{B_{M+1}\setminus B_M}|(e+p)\velocity|\dif x
\lesssim
\log\left(\frac{M+1}{M}\right)\rightarrow 0$$
as $M\rightarrow\infty$.
\end{proof}

\begin{lemma}[Weak-strong uniqueness principle]\label{lemma:WSU} Assume there is a strong solution $(\mathbf{v},\mathbf{p})$  to $\mathrm{(IE)}$ satisfying $(\nabla_{\mathrm{sym}}\mathbf{v})^-\in L_t^1L^{\infty}$ (recall \S\ref{sec:Notation}\ref{Notation:matrix}). Then, if $(\velocity,p)$ is an admissible weak solution to $\mathrm{(IE)}$ with $\velocity_0=\mathbf{v}_0$ and
\begin{equation}\label{WSU:cond}
\int_0^t\int_{B_{M+1}\setminus B_M}|\velocity-\mathbf{v}|^{a}|p-\mathbf{p}|^{b}\dif x\dif\tau\underset{M\rightarrow\infty}{\longrightarrow}0,
\end{equation}
for $(a,b)=(2,0)$ and $(1,1)$, necessarily $(\velocity,p)=(\mathbf{v},\mathbf{p})$. 
\end{lemma}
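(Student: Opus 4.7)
The plan is to run the classical relative-energy (modulated-energy) argument for weak-strong uniqueness of Euler, but with the spatial cutoffs $\psi_M$ of Lemma~\ref{De} in place of the indicator $\car{}$, in order to cope with the possibly infinite kinetic energies; the two instances of the decay hypothesis \eqref{WSU:cond} are then exactly what is needed to kill the two boundary terms produced by the cutoff as $M\to\infty$. Concretely, I introduce the localized relative energy
\begin{equation*}
\mathcal{E}_M(t):=\tfrac12\int_{\R^2}|\velocity(t)-\mathbf{v}(t)|^2\psi_M\dif x
=\int(e+\mathbf{e})(t)\psi_M\dif x-\int(\velocity\cdot\mathbf{v})(t)\psi_M\dif x,
\end{equation*}
with $e=\tfrac12|\velocity|^2$ and $\mathbf{e}=\tfrac12|\mathbf{v}|^2$. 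The two diagonal pieces are controlled by the dissipation identity \eqref{D(t)} applied to $(\velocity,p)$ and to $(\mathbf{v},\mathbf{p})$ respectively (the latter with vanishing dissipation), both tested against the time-independent $\psi=\psi_M$. For the cross term I use $\Psi=\mathbf{v}\psi_M\in C_c^1(\R^3;\R^2)$ (legitimate since $\mathbf{v}$ is $C^1$) in the weak formulation of $(\velocity,p)$, combined with $\partial_t\mathbf{v}=-(\mathbf{v}\cdot\nabla)\mathbf{v}-\nabla\mathbf{p}$ from the strong form.

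Adding the three identities, using $\Div\mathbf{v}=0$ and $\velocity_0=\mathbf{v}_0$, decomposing $\velocity\cdot((\velocity-\mathbf{v})\cdot\nabla)\mathbf{v}=\nabla_{\mathrm{sym}}\mathbf{v}:(\velocity-\mathbf{v})\otimes(\velocity-\mathbf{v})+(\velocity-\mathbf{v})\cdot\nabla\mathbf{e}$, and integrating the last term by parts against the divergence-free field $\velocity-\mathbf{v}$ produces a cancellation that eliminates the residual $\mathbf{e}(\velocity-\mathbf{v})$ flux and yields
\begin{equation*}
\mathcal{E}_M(t)=-\!\int_0^t\!\!\int\psi_M\,\nabla_{\mathrm{sym}}\mathbf{v}:(\velocity-\mathbf{v})\otimes(\velocity-\mathbf{v})\dif x\dif\tau-\langle D(t),\psi_M\rangle+\mathscr{B}_M(t),
\end{equation*}
where
\begin{equation*}
\mathscr{B}_M(t):=\int_0^t\!\!\int\nabla\psi_M\cdot\Bigl[\tfrac12|\velocity-\mathbf{v}|^2\velocity+(p-\mathbf{p})(\velocity-\mathbf{v})\Bigr]\dif x\dif\tau
\end{equation*}
contains precisely the two combinations pre-arranged in \eqref{WSU:cond}.

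To close, I send $M\to\infty$. Since $|\nabla\psi_M|\leq 2\car{B_{M+1}\setminus B_M}$ and $\velocity\in L^\infty$, the two summands in $\mathscr{B}_M$ vanish via the cases $(a,b)=(2,0)$ and $(1,1)$ of \eqref{WSU:cond}; by admissibility and compact support of $D$, $\langle D(t),\psi_M\rangle\to\langle D(t),\car{}\rangle\geq 0$ once $M$ is large; and $\mathcal{E}_M(0)=0$. Conservation of total vorticity for both $\velocity$ and $\mathbf{v}$ makes the leading $|x|^{-1}$ terms in \eqref{BSO:decay} agree, so $\velocity-\mathbf{v}=\mathcal{O}(|x|^{-2})$ and $\mathcal{E}(t):=\lim\mathcal{E}_M(t)<\infty$; the bulk integrand is dominated by $\norma{\nabla\mathbf{v}}{\infty}|\velocity-\mathbf{v}|^2\in L^1$. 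Passing to the limit and invoking the pointwise bound $\nabla_{\mathrm{sym}}\mathbf{v}:\xi\otimes\xi\geq-|(\nabla_{\mathrm{sym}}\mathbf{v})^-||\xi|^2$ deliver
\begin{equation*}
\mathcal{E}(t)\leq 2\int_0^t\norma{(\nabla_{\mathrm{sym}}\mathbf{v})^-(\tau)}{\infty}\mathcal{E}(\tau)\dif\tau,
\end{equation*}
and Gronwall combined with $(\nabla_{\mathrm{sym}}\mathbf{v})^-\in L^1_tL^\infty$ forces $\mathcal{E}\equiv 0$, hence $\velocity=\mathbf{v}$; substituting back into the momentum equations gives $\nabla(p-\mathbf{p})=0$, so $p$ and $\mathbf{p}$ coincide up to a function of $t$, which is the sharpest statement consistent with the gauge freedom $p\mapsto p+f(t)$ of the weak formulation. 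The main obstacle I foresee is the precise cancellation of the $\mathbf{e}(\velocity-\mathbf{v})$ flux in $\mathscr{B}_M$: without it, the error would not fall neatly into the two combinations singled out by \eqref{WSU:cond}, and one would need further a priori decay of $\mathbf{v}$ at infinity beyond what the stated hypotheses provide.
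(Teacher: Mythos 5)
Your argument is essentially the paper's proof: the same localized relative energy built with the cutoffs $\psi_M$, the same three identities (the two energy balances plus the weak formulation of $(\velocity,p)$ tested against $\mathbf{v}\psi_M$), the same cancellation of the $\mathbf{p}\velocity\cdot\nabla\psi_M$ and $\mathbf{e}(\mathbf{v}-\velocity)\cdot\nabla\psi_M$ fluxes via divergence-freeness, and the same use of \eqref{WSU:cond} with $(a,b)=(2,0)$ and $(1,1)$ to kill the two cutoff terms, followed by Gr\"onwall. The one wrinkle is your order of limits: sending $M\to\infty$ before Gr\"onwall requires $\mathcal{E}(t)<\infty$, which you justify by a Biot--Savart decay that is not among the lemma's hypotheses; the paper instead applies Gr\"onwall at fixed $M$ (where the boundary term is $\leq\varepsilon$ for all $M\geq M_0(\varepsilon)$) and only afterwards takes $\limsup_{M\to\infty}$ and $\varepsilon\downarrow 0$, which avoids needing any a priori finiteness of the relative energy.
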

\begin{proof} Let $\psi\in C^1(\R^3;[0,1])$ be a test function with $\psi\equiv 1$ on $\sop D$. 
Let us consider the error
$$F_{\psi}(t):=\frac{1}{2}\int_{\R^2}|\velocity(t)-\mathbf{v}(t)|^2\psi(t)\dif x.$$
Hence, we deduce
\begin{align}
F_{\psi}
&=\int_{\R^2}e\psi\dif x
+\int_{\R^2}\mathbf{e}\psi\dif x
-\int_{\R^2}\velocity\cdot\mathbf{v}\psi\dif x\nonumber\\
&\leq\int_{0}^{t}\int_{\R^2}\left( e\partial_t\psi+\left(e+p\right)\velocity\cdot\nabla\psi\right)\dif x\dif\tau
+\int_{\R^2}e_0\psi_0\dif x\label{WS:1}\\
&+\int_{0}^{t}\int_{\R^2}\left( \mathbf{e}\partial_t\psi+\left(\mathbf{e}+\mathbf{p}\right)\mathbf{v}\cdot\nabla\psi\right)\dif x\dif\tau
+\int_{\R^2}\mathbf{e}_0\psi_0\dif x\label{WS:2}\\
&-\int_0^t\int_{\R^2}(\velocity\cdot\partial_t(\mathbf{v}\psi)
+\velocity\otimes\velocity :\nabla(\mathbf{v}\psi)+p\,\Div(\mathbf{v}\psi))\dif x\dif\tau
-\int_{\R^2}\velocity_0\cdot\mathbf{v}_0\psi_0\dif x,\label{WS:3}
\end{align}
where we have applied $\langle D,\psi\rangle\geq 0$ in \eqref{WS:1}, $\langle\mathbf{D},\psi\rangle= 0$ in \eqref{WS:2} and Definition~\ref{Euler:weak} for $(\velocity,p)$ tested with $\mathbf{v}\psi$ in \eqref{WS:3}. Since $\tfrac{1}{2}\velocity_0\cdot\mathbf{v}_0=\mathbf{e}_0=e_0$, the last terms in \eqref{WS:1}-\eqref{WS:3} cancel each other out. 
It can be checked that the above inequality can be written as
$F_\psi\leq I_\psi+J_\psi$ where
\begin{align*}
I_{\psi}&=-\int_0^t\int_{\R^2}(\velocity\cdot\partial_t\mathbf{v}
+\velocity\otimes\velocity :\nabla\mathbf{v})\psi\dif x\dif\tau
+\int_0^t\int_{\R^2}(\mathbf{p}\velocity+\mathbf{e}(\mathbf{v}-\velocity))\cdot\nabla\psi\dif x\dif\tau,\\
J_{\psi}&=\frac{1}{2}\int_0^t\int_{\R^2}|\velocity-\mathbf{v}|^2D_t\psi\dif x\dif\tau+\int_0^t\int_{\R^2}(p-\mathbf{p})(\velocity-\mathbf{v})\cdot\nabla\psi\dif x\dif\tau,
\end{align*}
being $D_t\equiv\partial_t+\velocity\cdot\nabla$ the material derivative.
For $I_\psi$, since $(\mathbf{v},\mathbf{p})$ is a strong solution  we have
$$-\int_0^t\int_{\R^2}\velocity\cdot\partial_t\mathbf{v}\psi\dif x\dif\tau
=\int_0^t\int_{\R^2}\velocity\cdot\Div(\mathbf{v}\otimes\mathbf{v})\psi\dif x\dif\tau
+\int_0^t\int_{\R^2}\velocity\cdot\nabla\mathbf{p}\psi\dif x\dif\tau.$$
Hence, by applying 
$$
\velocity\cdot\Div(\mathbf{v}\otimes\mathbf{v})
=\mathbf{v}\cdot\nabla\mathbf{v}\velocity,\quad\quad
\velocity\otimes\velocity :\nabla\mathbf{v}=\velocity\cdot\nabla\mathbf{v}\velocity,
$$
we get
$$\int_0^t\int_{\R^2}(\velocity\cdot\Div(\mathbf{v}\otimes\mathbf{v})
-\velocity\otimes\velocity :\nabla\mathbf{v})\psi\dif x\dif\tau
=\int_0^t\int_{\R^2}(\mathbf{v}-\velocity)\cdot\nabla\mathbf{v}\velocity\psi\dif x\dif\tau.$$
Thus, since $\velocity,\mathbf{v}\in L^\infty_\sigma$ implies
\begin{align*}
0&=\int_{\R^2}\velocity\cdot\nabla(\mathbf{p}\psi)\dif x
=\int_{\R^2}\velocity\cdot\nabla\mathbf{p}\psi\dif x+\int_{\R^2}\mathbf{p}\velocity\cdot\nabla\psi\dif x,\\
0&=\int_{\R^2}(\mathbf{v}-\velocity)\cdot\nabla(\mathbf{e}\psi)\dif x
=\int_{\R^2}\mathbf{e}(\mathbf{v}-\velocity)\cdot\nabla\psi\dif x+\int_{\R^2}(\mathbf{v}-\velocity)\cdot\nabla\mathbf{v}\mathbf{v}\psi\dif x,
\end{align*}
we have
$$I_\psi
=-\int_0^t\int_{\R^2}(\mathbf{v}-\velocity)\cdot\nabla_{\mathrm{sym}}\mathbf{v}(\mathbf{v}-\velocity)\psi\dif x\dif\tau.$$
In particular
$$I_\psi
\leq 2\int_0^t\norma{(\nabla_{\mathrm{sym}}\mathbf{v})^-}{\infty}F_{\psi}\dif\tau.$$
\indent Now let us take $\psi_M$ as in Lemma~\ref{De}. Thus, by applying the decay hypothesis \eqref{WSU:cond}, for every $\varepsilon>0$ there is $M_0(\varepsilon)>0$ so that
$$J_{\psi_M}
\leq\varepsilon\quad\textrm{for all }M\geq M_0.$$
Plugging all together, Gr\"{o}nwall's inequality yields
$$F_{\psi_M}(t)\leq\varepsilon\mathrm{exp}\left(2\int_0^t\norma{(\nabla_{\mathrm{sym}}\mathbf{v})^-}{\infty}\dif\tau\right)
\quad\textrm{for all }M\geq M_0.$$
Finally, the statement follows by taking $\limsup_{M\uparrow\infty}$ above and making $\varepsilon\downarrow 0$ after.
\end{proof}

\section{Simulations}\label{sec:simulations}

In this section we provide some numerical simulations with the aim of illustrating how these solutions may look like. To this end we consider the classical vortex-blob regularization (\cite{caflischlowengrub,Kra86}) which consists of desingularazing the Cauchy kernel $K(x)=\frac{1}{2\pi i x}$ by introducing a parameter $\delta>0$ in the denominator
$$K_{\delta}(x)
:=K(x)\frac{|x|^2}{|x|^2+\delta^2}
=\left(\frac{1}{2\pi}\frac{x^\perp}{|x|^2+\delta^2}\right)^*.$$
Let
$h$ $\equiv$ time step and $S$ $\equiv$ grid of $\T$. Here we take $\delta=0.002$, $h=0.025$ and $|S|=20000$ points. Thus, we consider the discrete $\delta$-BR equation
\begin{equation}\label{BRdelta}
\frac{\zcurve(t+h,s)-\zcurve(t,s)}{h}
=\frac{\ell}{2\pi|S|}\sum_{s'\in S}\frac{(\zcurve(t,s)-\zcurve(t,s'))^\perp}{|\zcurve(t,s)-\zcurve(t,s')|^2+\delta^2}\varpi(t,s'),
\quad s\in S,
\end{equation}
which yields a recurrence for $t=0$, $h$, $2h$, $3h$, $\ldots$ starting from $\zcurve_0$.
For simplicity we shall focus on the circle $\zcurve_0(s)=e^{is}$ ($\ell=2\pi$),
for different vortex sheet strengths $\varpi(t)=\varpi_0$. To simulate the Kelvin-Helmholtz instability we consider a tiny perturbation of $\zcurve_{0}$
$$\zcurve_{0,\gamma}=\zcurve_0-\gamma\partial_s\zcurve_0^\perp,
$$
with $\gamma(s)=\epsilon\sin(ks)$. Here we take $\epsilon=0.001$ (perturbation amplitude) and $k=30$ (perturbation frequency).
For times
$t=0$, $1.25$ and $2.5$ (from left to right) we plot the macroscopic vector field
$$\bar{\velocity}(t,x)=\frac{\ell}{2\pi |S||\Lambda|}\sum_{\lambda\in\Lambda}\sum_{s'\in S}\frac{(x-\map_{\lambda}(t,s'))^\perp}{|x-\map_{\lambda}(t,s')|^2+\delta^2}\varpi(t,s'),$$
the Kelvin-Helmholtz curve $\zcurve_{\gamma}(t)$ (light blue) given by \eqref{BRdelta} starting from $\zcurve_{0,\gamma}$, and the boundary of the turbulence zone $\map_{\pm}(t)=\zcurve(t)\pm ct\partial_s\zcurve_0^\perp$ (dark blue) with 
$c(s)=\beta(|\varpi_0|*\eta_\epsilon)(s)$
and, for simplicity, $\zcurve(t)$ given by \eqref{BRdelta} starting from $\zcurve_{0}$, coupled with the points where $\varpi_0$ vanishes (red). Here we take $|\Lambda|=10$ and $\epsilon=\ell/20$. In the pictures below $\beta=\tfrac{1}{8}$. However, for short times we have observed that $\beta=\tfrac{1}{4}$ may fit better as $\delta$ decreases. Although we would have liked to explore the scope of this view point in more detail, we have thought appropriate to present this simple approach here and leave possible improvements for future works.

\begin{figure}[h!]
\centering
\subfigure{\label{fig:00}\includegraphics[width=0.32\textwidth]{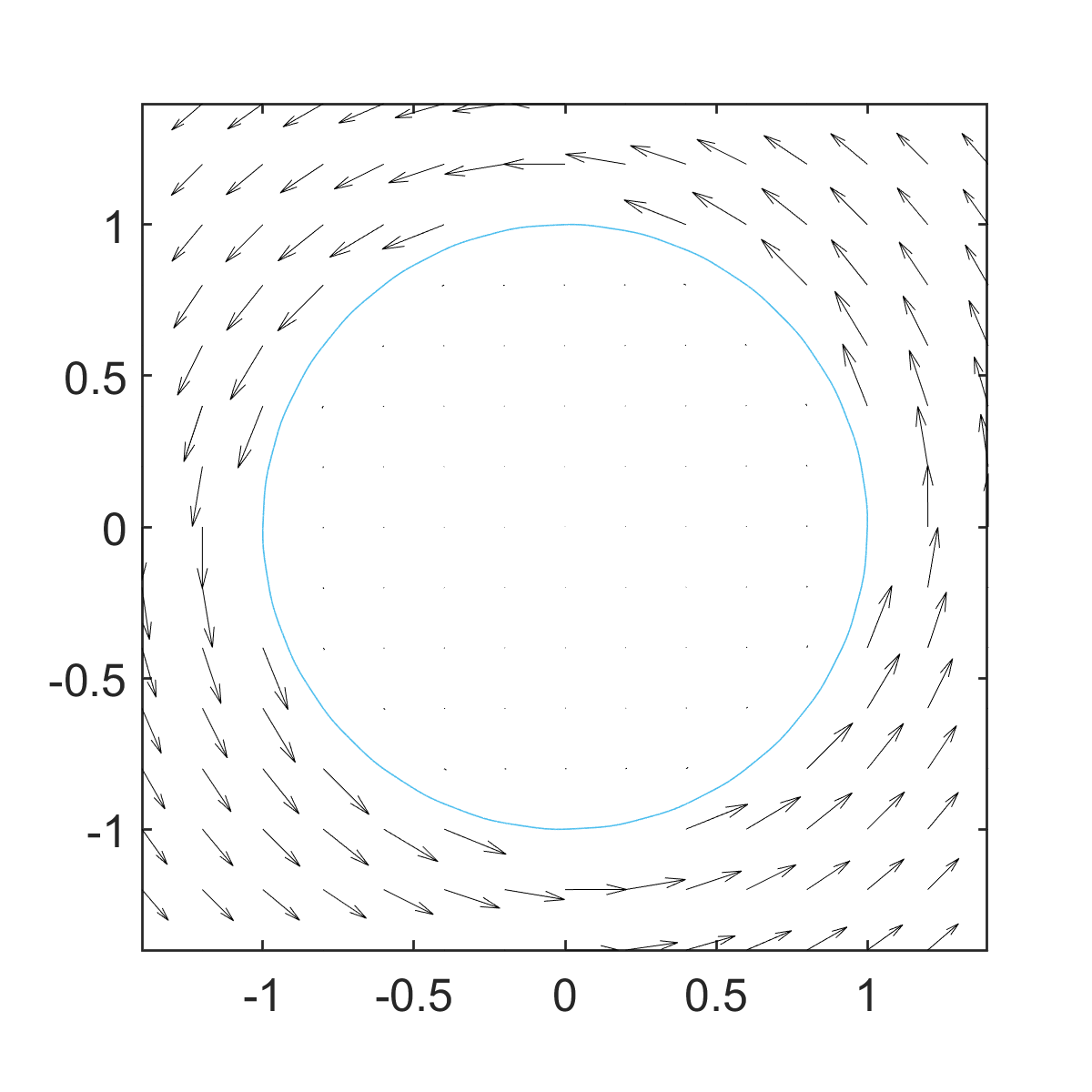}}  
\subfigure{\label{fig:01}\includegraphics[width=0.32\textwidth]{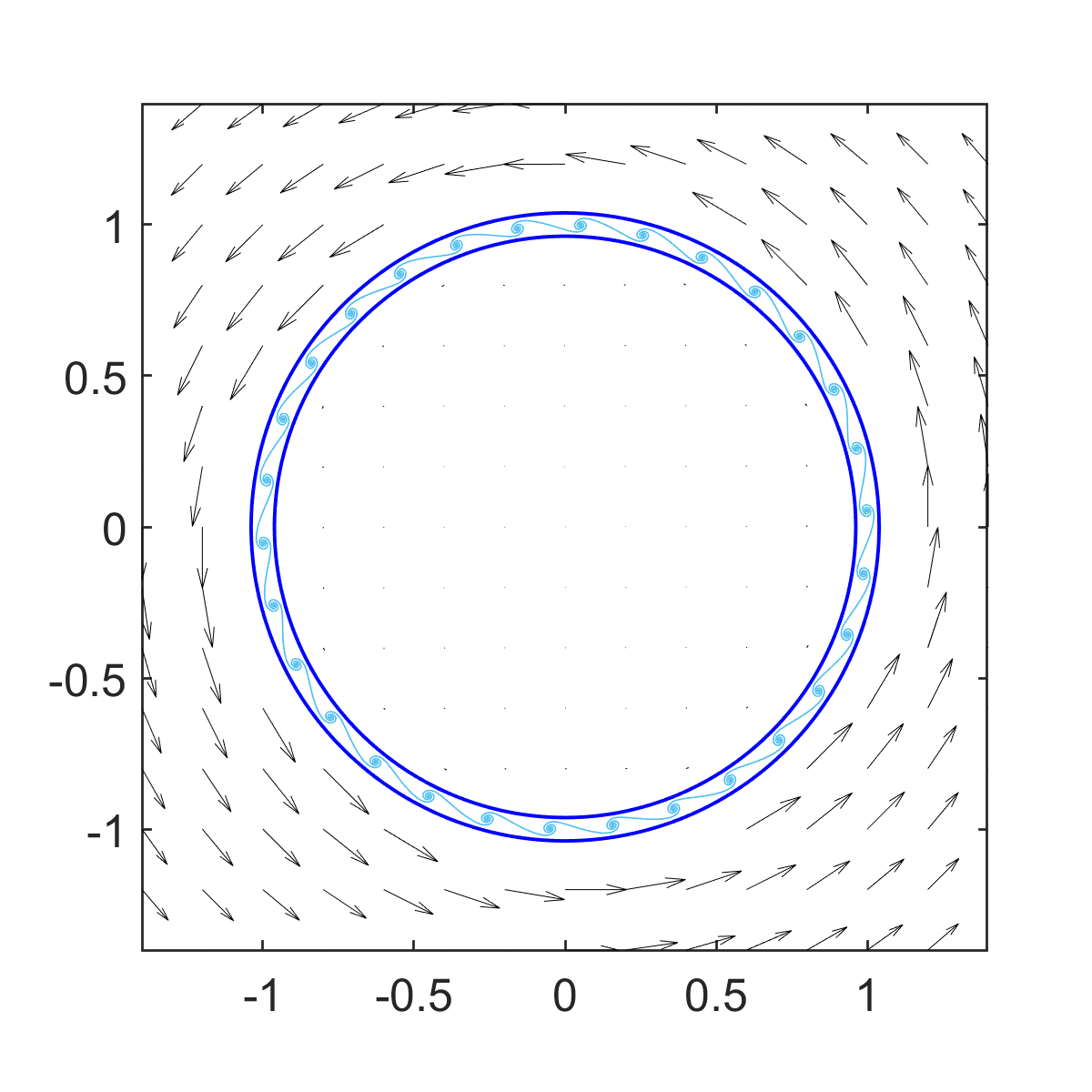}}   
\subfigure{\label{fig:02}\includegraphics[width=0.32\textwidth]{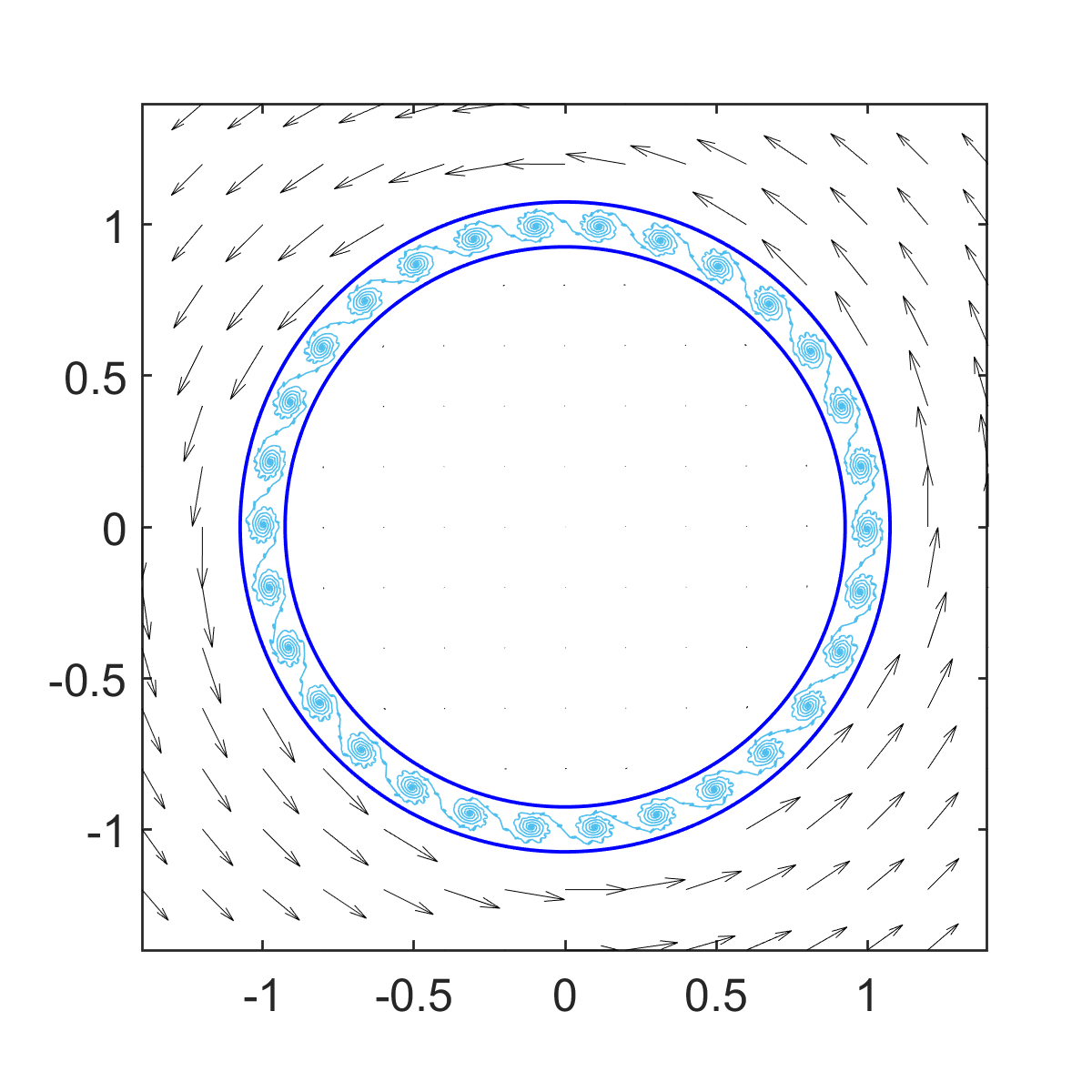}} 
\subfigure{\label{fig:20}\includegraphics[width=0.32\textwidth]{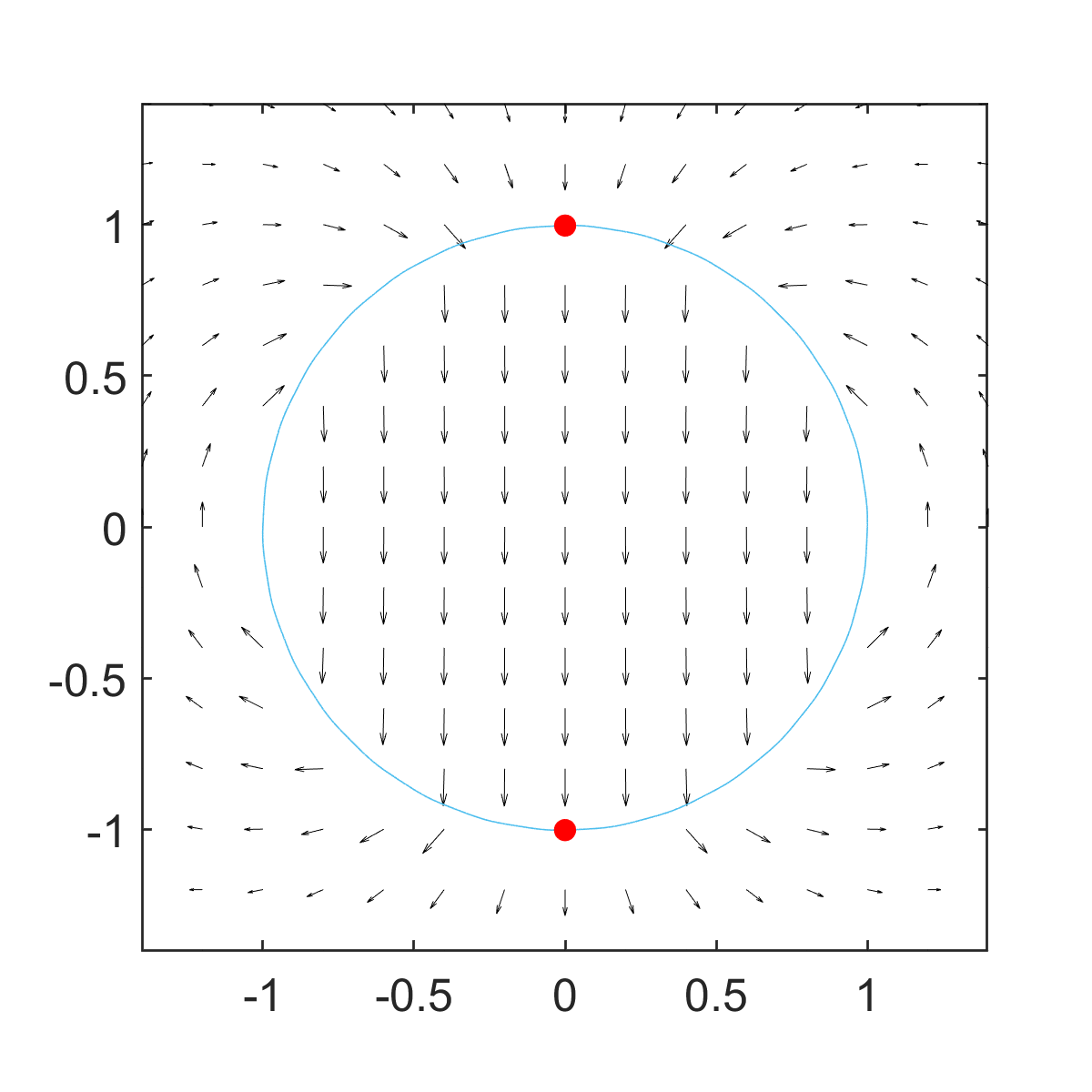}}  
\subfigure{\label{fig:21}\includegraphics[width=0.32\textwidth]{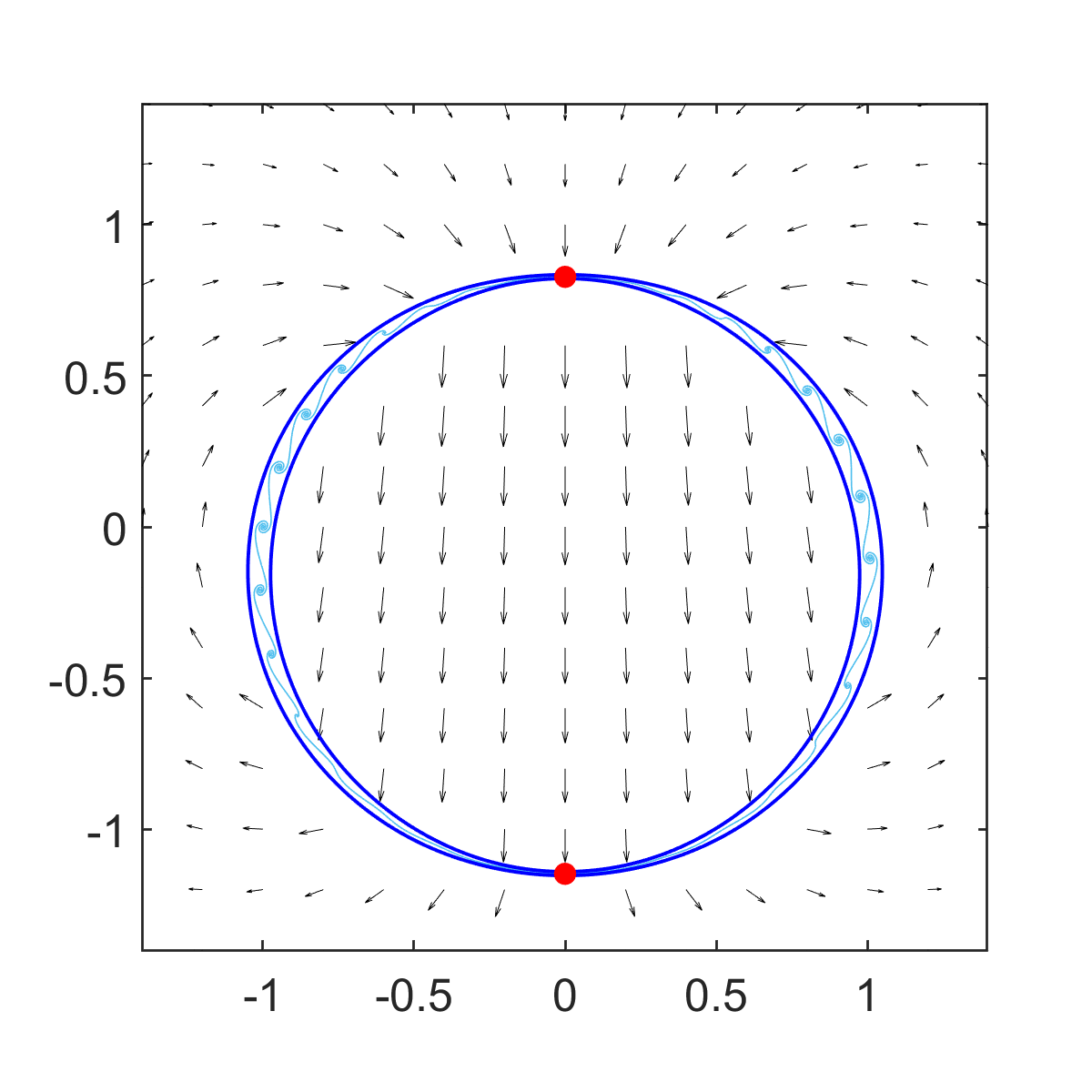}}   
\subfigure{\label{fig:22}\includegraphics[width=0.32\textwidth]{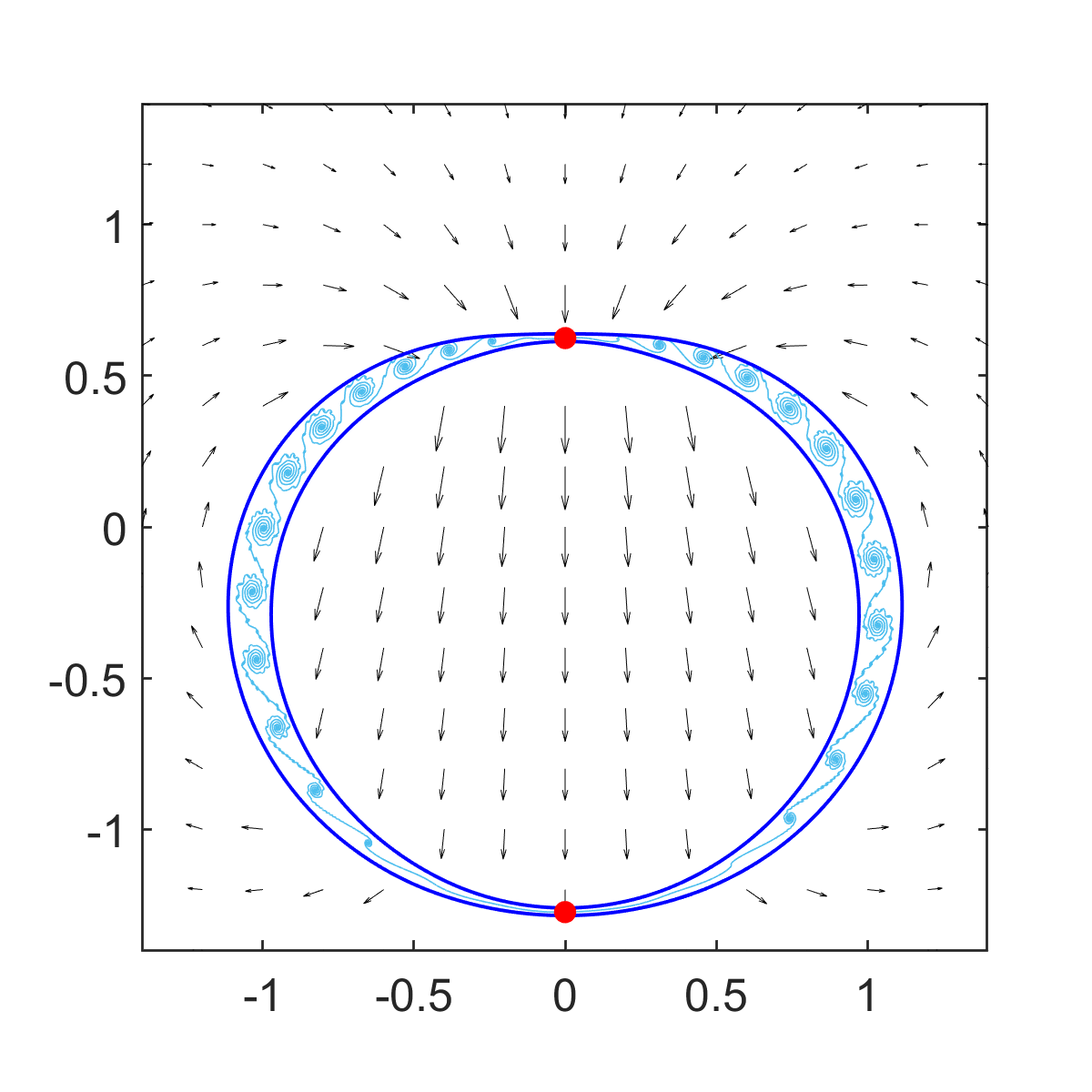}} 
\subfigure{\label{fig:40}\includegraphics[width=0.32\textwidth]{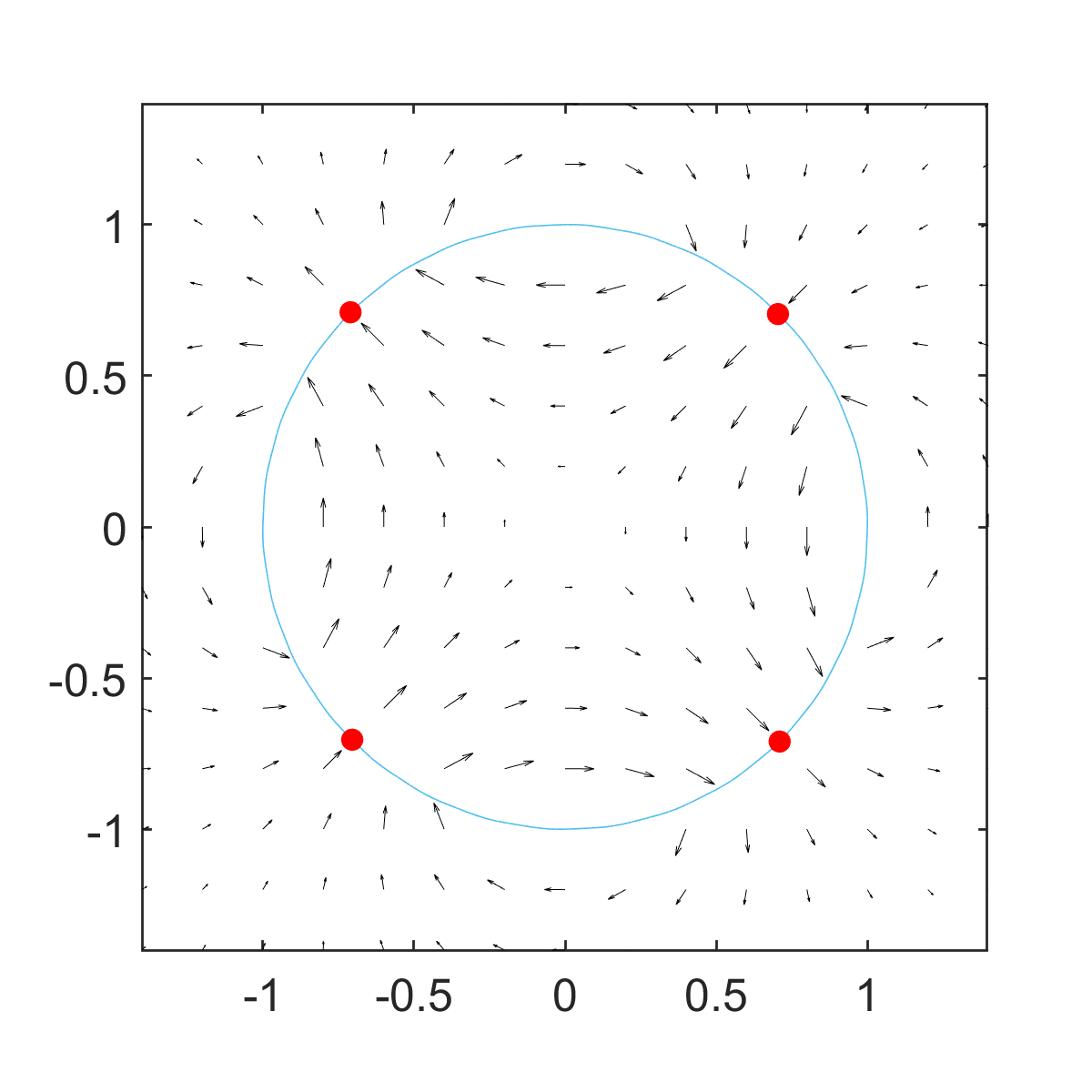}}  
\subfigure{\label{fig:41}\includegraphics[width=0.32\textwidth]{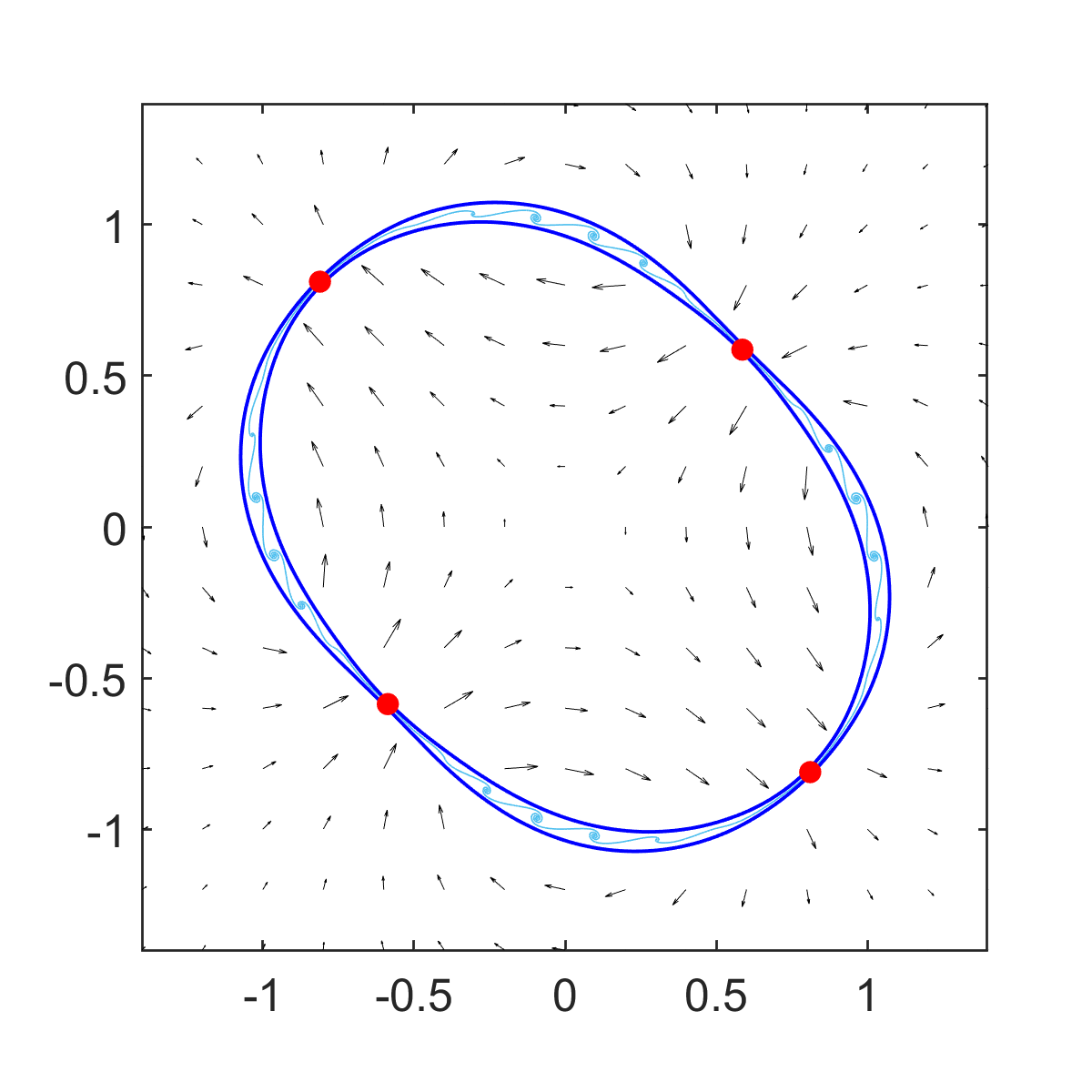}}   
\subfigure{\label{fig:42}\includegraphics[width=0.32\textwidth]{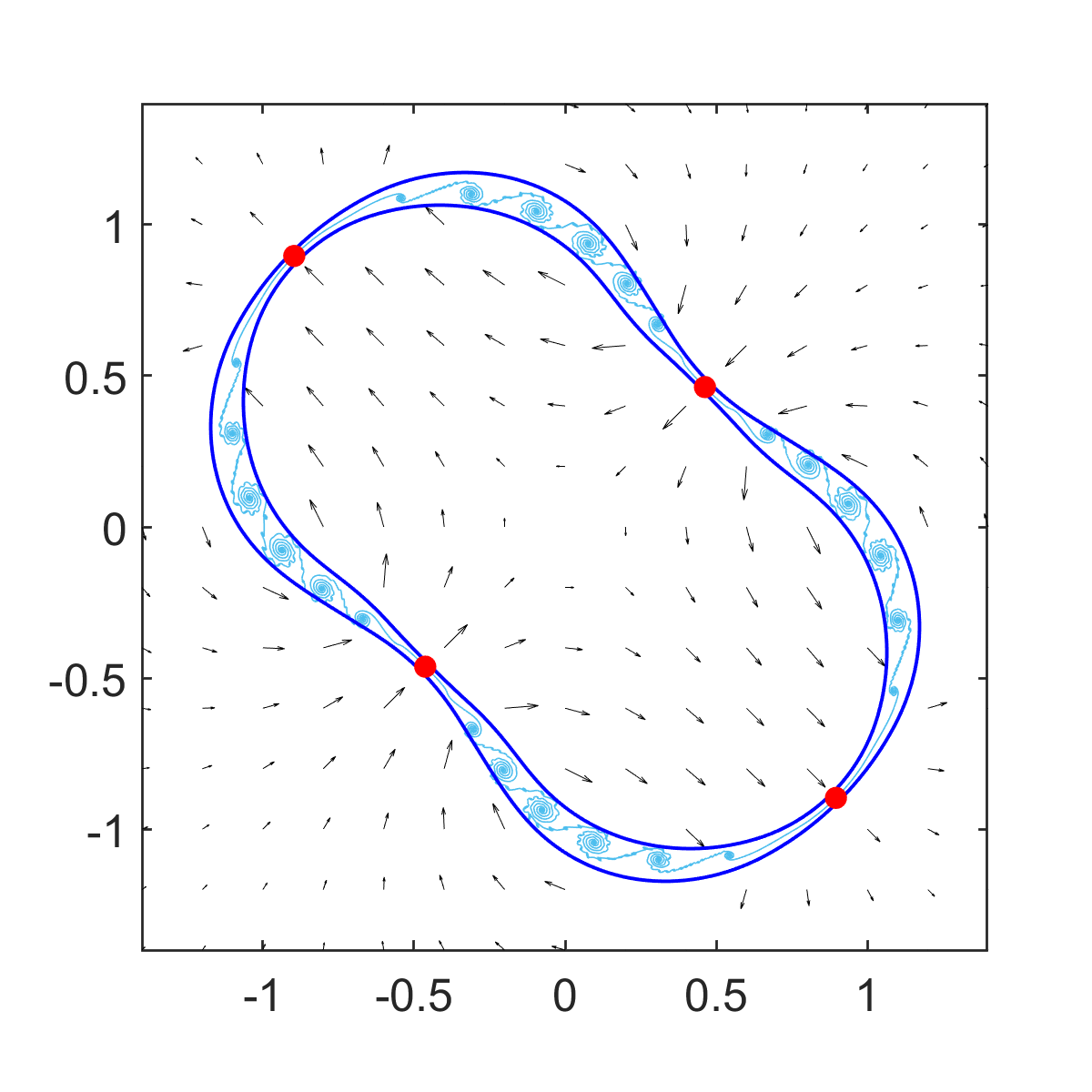}} 
\caption{From top to bottom, $\varpi_0(s)=\tfrac{1}{4}$, $\tfrac{1}{4}\cos(s)$ and $\tfrac{1}{4}\cos(2s)$.}              
\end{figure}

\centering


\begin{thebibliography}{xx}
	
\bibitem{Bir62} G.~Birkhoff,
Helmholtz and Taylor instability, in Hydrodynamics Instability, \textit{Proc. Sympos. Appl. Math., American Mathematical Society},
Providence, RI, Vol. 13, pp. 55-76, 1962.

\bibitem{BDS11} Y. Brenier, C. De Lellis, L. Sz\'ekelyhidi Jr.,
Weak-strong uniqueness for measure-valued solutions,  \textit{Comm. Math. Phys.} 305(2):351-361 (2011).

\bibitem{BDSV19} T. Buckmaster, C. De Lellis, L. Sz\'ekelyhidi Jr., V. Vicol,
Onsager's conjecture for admissible weak solutions, \textit{Comm. Pure Appl. Math.}, 72(2):227-448 (2018).

\bibitem{caflischlowengrub} R. Caflisch, J.S. Lowengrub, 
Convergence of the vortex method for vortex sheets, \textit{SIAM J. Numer. Anal.} 26 (1989), no. 5, 1060--1080.

\bibitem{CO89} R. Caflisch, O. Orellana, 
Singular solutions and ill-posedness of the
 evolution of vortex sheets,  \textit{SIAM J. Math. Anal.} 20 , no. 2, 293-307, (1989).

	
\bibitem{Mixing} \'A. Castro, D. C\'ordoba, D. Faraco,
Mixing solutions for the Muskat problem, 	\textit{arXiv:1605.04822} (2016).

\bibitem{CCG12} \'A. Castro, D. C\'ordoba, F. Gancedo,
A naive parametrization for the vortex-sheet problem, \textit{Mathematical Aspects of Fluid Mechanics}, 88-115, Cambridge University Press (2012).

\bibitem{Degraded} \'A. Castro, D. Faraco, F. Mengual,
Degraded mixing solutions for the Muskat problem,  \textit{Calc. Var. Partial Differential Equations}, 58 (2019), no. 2, Art. 58, 29 pp.

\bibitem{CET94} P. Constantin, W. E, E. Titi,
Onsager's conjecture on the energy conservation for solutions of Euler's equation,  \textit{Comm. Math. Phys.}, 165(1):207-209 (1994).

\bibitem{GeoFD} B. Cushman-Roisin, J-M. Beckers,
Introduction to Geophysical Fluid Dynamics:  Physical and Numerical Aspects,  \textit{Academic Press}, (2011).

\bibitem{D73} C. M. Dafermos, The entropy rate admissibility criterion for solutions of
              hyperbolic conservation laws,
\textit{J. Differential Equations}, 14, 202--212, (1973).		

\bibitem{DanSz}	S. Daneri, L. Sz{\'e}kelyhidi Jr., Non-uniqueness and h-principle for H\"older-continuous weak solutions of the Euler equations, \textit{Arch. Ration. Mech. Anal}, vol. 224, no. 2, 471-514, (2017).

\bibitem{DanRSz} S. Daneri, E. Runa,  L. Sz{\'e}kelyhidi Jr., Non-uniqueness for the Euler equations up to Onsager's critical exponent, \textit{arXiv:2004.00391}, (2020).

\bibitem{Delort} J. M. Delort,
Existence de nappes de tourbillon en dimension deux,  \textit{J. Am. Math. Soc.} 4, 553-586 (1991).

\bibitem{ChDLK}	E. Chiodaroli, C. De Lellis, O. R. Kreml, Global ill-posedness of the isentropic system of gas dynamics, \textit{Comm. Pure Appl. Math.}, vol. 68, no. 7, pp. 1157-1190, (2015).

\bibitem{ChK} E. Chiodaroli, O. Kreml, 
On the energy dissipation rate of solutions to the
compressible isentropic {E}uler system,
\textit{Arch. Ration. Mech. Anal.},
214, no. 3, 1019--1049, (2014).		

\bibitem{Eulerinclusion} C. De Lellis, L. Sz\'ekelyhidi Jr.,
The Euler equations as a differential inclusion, \textit{Ann. of Math.} (2), 170(3), 1417-1436, (2009).

\bibitem{Onadmissibility} C. De Lellis, L. Sz\'ekelyhidi Jr., On admissibility criteria for weak solutions of the Euler equations,  \textit{Arch. Ration. Mech. Anal} 195(1), 225-260 (2010).

\bibitem{HPfluid} C. De Lellis, L. Sz\'ekelyhidi Jr., The h-principle and the equations of fluid dynamics,  \textit{Bull. Amer. Math. Soc.} (N.S.) 49, 3, 347-375, (2012).

\bibitem{HdPDE}	C. De Lellis, L. Sz{\'e}kelyhidi Jr., High dimensionality and h-principle in PDE, \textit{Bull. Amer. Math. Soc.} (N.S.), vol. 54, no. 2, 247-282, (2017).

\bibitem{DLSz12} C. De Lellis, L. Sz{\'e}kelyhidi Jr., Dissipative continuous Euler flows, \textit{Inv. Math.}, vol. 193, no. 2, 377-407, (2012).

\bibitem{DR88} J. Duchon, R. Robert,
Global vortex sheet solutions of Euler equations in the plane, \textit{J. Differential
Equations} 73, no. 2, 215-224, (1988).


\bibitem{DR00} J. Duchon, R. Robert,
Inertial energy dissipation for weak solutions of incompressible Euler and Navier-Stokes equations,
\textit{Nonlinearity} 13(1), 249-255, (2000).

\bibitem{EM94} L. C. Evans, S. Muller,
Hardy spaces and the two-dimensional Euler equations with nonnegative vorticity, \textit{J. Amer. Math. Soc.} 7, 199-219, (1994).

\bibitem{Eyi94} G. Eyink,
Energy dissipation without viscosity in ideal hydrodynamics I. Fourier analysis and local energy transfer, \textit{Physica D: Nonlinear Phenomena}, 78(3-4):222-240, (1994).

\bibitem{Fei14} E. Feireisl, 
Maximal dissipation and well-posedness for the compressible
              {E}uler system,
\textit{J. Math. Fluid Mech.}, 16, no. 3, 447--461, (2014).

\bibitem{Piecewise} C. F\"{o}rster, L. Sz\'ekelyhidi Jr.,
Piecewise constant subsolutions for the Muskat problem, \textit{Commun. Math.
Phys.} 363(3), 1051-1080, (2018).

\bibitem{Fri95} U. Frisch,
Turbulence, \textit{Cambridge University Press}, Cambridge, (1995). 
\bibitem{Ise18} P. Isett,
A proof of Onsager's conjecture,  \textit{Ann. of Math.}, 188(3):871-963 (2018).

\bibitem{gksz:2020} B. Gebhard,  J.J. Kolumb{\'a}n, L. Sz\'{e}kelyhidi\hspace{0.1cm}Jr., 
A new approach to the Rayleigh-Taylor instability,
\textit{arXiv:2002.08843} (2020).


\bibitem{Kra86} R. Krasny,
Desingularization of periodic vortex sheet roll-up, \textit{J. Comput. Phys.} 65,
292--313, (1986).

\bibitem{Kra87} R. Krasny,
Computation of vortex sheet roll-up in the Trefftz plane, \textit{J. Fluid Mech.}
184, 123--155 (1987).

\bibitem{Lions} P.-L. Lions,
Mathematical topics in fluid mechanics. Vol. 1. Incompressible models,  \textit{Oxford Lecture Series in
Mathematics and its Applications}, 3. Oxford Science Publications. The Clarendon Press, Oxford University Press, New York, (1996).

\bibitem{LX95} J.-G. Liu, Z. Xin,
Convergence of vortex methods for weak solutions to the 2D Euler equations with vortex sheet data, \textit{Comm. Pure Appl. Math.} 48, 611--628, (1995).

\bibitem{LNS07} M. C. Lopes Filho, H. J. Nussenzveig Lopes, S. Schochet,
A criterion for the equivalence of the {B}irkhoff-{R}ott and
              {E}uler descriptions of vortex sheet evolution,
\textit{Trans. Amer. Math. Soc.},
359, no.9, 4125--4142, (2007).

\bibitem{LNX01} M. C. Lopes Filho, H. J. Nussenzveig Lopes,  Z. Xin,
Existence of vortex sheets with reflection symmetry in two space dimensions,  \textit{Arch. Ration. Mech. Anal}
158, no. 3, 235--257, (2001).

\bibitem{Maj93} A. J. Majda, 
Remarks on weak solutions for vortex sheets with a distinguished sign,  \textit{Ind.
Univ. Math J.} 42, 921--939, (1993).

\bibitem{MB02} A. J. Majda, A. L. Bertozzi,
Vorticity and incompressible flow,  \textit{Cambridge texts in Applied Mathematics}, (2002).

\bibitem{MK18} S. Markfelder, C. Klingenberg, The Riemann problem for the multidimensional isentropic system of gas dynamics is ill-posed if it contains a shock, \textit{Arch. Ration. Mech. Anal.}, vol. 227, no. 3, pp. 967--994, (2018).

\bibitem{M20} F. Mengual,
H-principle for the 2D incompressible porous media equation with viscosity jump, \textit{arXiv:2004.03307} (2020).

\bibitem{Moo79} D. W. Moore,
The spontaneous appearance of a singularity in the shape of an evolving vortex sheet,  \textit{Proc. Roy. Soc. London Ser.} A 365, no. 1720, 105--119, (1979).

\bibitem{Ons49} L. Onsager, 
Statistical hydrodynamics, \textit{Nuovo Cimento} (9) 6 (1949). Supplemento, no. 2 (Convegno Internazionale di Meccanica Statistica), 279-287.

\bibitem{Ott99} F. Otto, 
Evolution of microstructure in unstable porous media flow: a relaxational approach, \textit{Comm. Pure Appl. Math.} 52, 873--915, (1999).

\bibitem{PKV02} B. Pearson, P.-$\mathring{\textrm{A}}$, Krogstad, W. Van De Water, 
Measurements of the turbulent energy dissipation rate,  \textit{Physics of fluids}, 14, no.3, 1288--1290, (2002).

\bibitem{SSBF81} C. Sulem, P.L. Sulem, C. Bardos, U. Frisch,
Finite time analyticity for the two- and three- dimensional Kelvin-Helmholtz instability, \textit{Commun. Math. Phys.} 80, 485--516, (1981).

\bibitem{Sch95} S. Schochet,
The weak vorticity formulation of the 2D Euler equations and concentration-cancellation, \textit{Comm. P.D.E.} 20, 1077--1104, (1995).

\bibitem{Sch96} S. Schochet,
Point-vortex method for periodic weak solutions of the 2-D Euler equations, \textit{Comm. Pure and Appl. Math.} 49, 911--965, (1996).

\bibitem{Sre98} K. Sreenivasan,
An update on the energy dissipation rate in isotropic turbulence, \textit{Physics of Fluids}, 10, no.2, 528--529, (1998).

\bibitem{Sze11} L. Sz\'ekelyhidi Jr.,
Weak solutions to the incompressible Euler equations with vortex sheet initial data, \textit{C. R. Math. Acad. Sci. Paris} 349, no. 19-20, 1063-1066, (2011).

\bibitem{Sze12} L. Sz\'ekelyhidi Jr.,
Relaxation of the incompressible porous media equation, \textit{Ann. Sci. \'Ec. Norm. Sup\'er.} 45, no. 3, 491--509, (2012).

\bibitem{Fluidmotion} M. Van Dyke, 
An album of fluid motion, \textit{Parabolic}, Stanford, CA, (1982).

\bibitem{VW93} I. Vecchi, S. Wu, 
On $L^1$-vorticity for 2-D incompressible flow, \textit{Manuscripta Math.} 78, 403--412, (1993).

\bibitem{WSz} L. Sz{\'e}kelyhidi Jr., E. Wiedemann, Young Measures Generated by Ideal Incompressible Fluid Flows, \textit{Arch. Ration. Mech. Anal}, vol. 206, no. 1, 333--366, (2012).

\bibitem{Wu06} S. Wu, 
Mathematical analysis of vortex sheets, \textit{Comm. Pure Appl. Math.} 59, 1065--1206, (2006).

\end{thebibliography}
\end{document}